\theoremstyle{plain}
\newtheorem{theorem}{Theorem}[section]
\newtheorem{lemma}[theorem]{Lemma}
\newtheorem{corollary}[theorem]{Corollary}
\newtheorem*{theorem-A}{Theorem A}
\newtheorem*{theorem-B}{Theorem B}
\newtheorem*{theorem-C}{Theorem C}
\newtheorem*{theorem-D}{Theorem D}
\newtheorem*{theorem-E}{Theorem E}
\newtheorem*{theorem-F}{Theorem F}
\theoremstyle{definition}
\newtheorem{remark}[theorem]{Remark}
\newtheorem{theoremalph}{Theorem}
\def\rn{\mathbb R\sp n}
\def\Rn{\mathbb R\sp n}
\def\R{\mathbb R}
\def\N{\mathbb N}
\def\M{\mathcal M}
\newcommand{\medint}{-\kern  -,435cm\int}
\newcommand{\medintinrigo}{-\kern  -,315cm\int}
\newcommand{\medelle}{-\kern  -,235cm L}
\newcommand{\medellenrigo}{-\kern  -,180cm L}
\newtoks\by
\newtoks\paper
\newtoks\book
\newtoks\jour
\newtoks\yr
\newtoks\pages
\newtoks\vol
\newtoks\publ
\newtoks\eds
\newtoks\proc
\newtoks\no
\def\ota{{\hbox{???}}}
\def\cLear{\by=\ota\paper=\ota\book=\ota\jour=\ota\yr=\ota
\pages=\ota\vol=\ota\publ=\ota}
\def\endpaper{\the\by, \textit{\the\paper},
{\the\jour} \textbf{\the\vol} (\the\yr), \the\pages.\cLear}
\def\endbook{\the\by, \textit{\the\book}, \the\publ.\cLear}
\def\endprep{\the\by, \textit{\the\paper}, \the\jour.\cLear}
\def\endproc{\the\by, \textit{\the\paper}, \the\publ, \the\pages.\cLear}
\def\name#1#2{#1 #2}
\def\et{ and }
\def\BMO{\operatorname{BMO}}
\def\VMO{\operatorname{VMO}}
\numberwithin{equation}{section}
\newcommand{\norm}[1]{{\left\vert\kern-0.25ex\left\vert\kern-0.25ex\left\vert #1
    \right\vert\kern-0.25ex\right\vert\kern-0.25ex\right\vert}}
\newcommand\Del[1]{{\color{red}\ifmmode\cancel{#1}\else\sout{#1}\fi}}
\begin{document}

\date{\today}

\title[Fractional Orlicz-Sobolev embeddings  into Campanato type spaces]{Fractional Orlicz-Sobolev embeddings \\into Campanato type spaces}

\author {Angela Alberico, Andrea Cianchi, Lubo\v s Pick and Lenka Slav\'ikov\'a}

\address{Angela Alberico, Istituto per le Applicazioni del Calcolo ``M. Picone''\\
Consiglio Nazionale delle Ricerche \\
Via Pietro Castellino 111\\
80131 Napoli\\
Italy} \email{angela.alberico@cnr.it}

\address{Andrea Cianchi, Dipartimento di Matematica e Informatica \lq\lq U. Dini"\\
Universit\`a di Firenze\\
Viale Morgagni 67/a\\
50134 Firenze\\
Italy} \email{andrea.cianchi@unifi.it}

\address{Lubo\v s Pick, Department of Mathematical Analysis\\
Faculty of Mathematics and Physics\\
Charles University\\
Sokolovsk\'a~83\\
186~75 Praha~8\\
Czech Republic} \email{pick@karlin.mff.cuni.cz}

\address{Lenka Slav\'ikov\'a,
Department of Mathematical Analysis, Faculty of Mathematics and
Physics,  Charles University, Sokolovsk\'a~83,
186~75 Praha~8, Czech Republic}
\email{slavikova@karlin.mff.cuni.cz}
\urladdr{}

\subjclass[2000]{46E35, 46E30}
\keywords{Fractional Orlicz--Sobolev spaces; Sobolev embeddings; Campanato spaces; rearrangement-invariant spaces}

\begin{abstract}
    Optimal embeddings for fractional Orlicz--Sobolev spaces into (generalized) Campanato spaces on the Euclidean space are exhibited.
    Embeddings into vanishing Campanato spaces  are also characterized. Sharp embeddings into $\BMO (\rn)$ and $\VMO (\rn)$ are derived as special instances. Dissimilarities to corresponding embeddings for classical fractional Sobolev spaces are pointed out.
\end{abstract}

\date{\today}

\maketitle

\section{Introduction}\label{intro}

The fractional Orlicz-Sobolev spaces extend the classical fractional Sobolev spaces in the sense of Gagliardo--Slobodetskij in that the power type integrability is replaced with an integrability condition governed by a general Young function.  They were introduced in \cite{BonderSalort} and their properties have been systematically investigated in various recent contributions, including \cite{ACPS_lim0,ACPS_lim1,Bal:22,CMSV,deNBS,Iok:24,Mih,Sil:21}. 
This family of spaces provides a natural functional framework for solutions to nonlocal elliptic problems associated with  non-polynomial nonlinearities -- see \cite{BaOu,BKO,BKS,CKW,FangZhang,BonderSalortVivas,OSM,SalortVivas,SalortVivas2}.

Like for any kind of Sobolev type spaces, embedding theorems are central in the theory of fractional Orlicz-Sobolev spaces, and  have been established in \cite{ACPS_emb, ACPS_inf, ACPS_modulus}. Especially, sharp embeddings into spaces defined in terms of global integrability properties of functions, called rearrangement-invariant spaces in the literature, are the subject of \cite{ACPS_emb} and \cite{ACPS_inf}. A criterion for the continuity of fractional Orlicz-Sobolev functions has been detected in \cite{ACPS_modulus}, where their optimal modulus of continuity is also exhibited.

The present paper adds one further piece of information to the theory of the spaces in question and provides a characterization of their embeddings into spaces of (generalized) Campanato type. To be more specific, denote by $V^{s,A}(\rn)$ the fractional Orlicz-Sobolev space associated with  a smoothness parameter $s\in (0,1)$ and
a Young function $A$,  and let $\mathcal L^{\varphi }(\rn)$ be the Campanato space of those functions whose integral oscillation over balls is bounded by a function $\varphi$ of their radius (precise definitions are recalled in the next section). 

Given any $s$ and $A$ as above, we exhibit the optimal (smallest) Campanato space $\mathcal L^{\varphi}(\rn)$ such that
\begin{align}\label{emb-intro1}
    V^{s,A}(\rn) \to \mathcal L^{\varphi }(\rn).
\end{align}
Here, and in what follows, the arrow \lq\lq $\to$" stands for continuous embedding and $n\in \N$.

Embeddings for higher-order spaces $V^{s,A}(\rn)$, with non-integer $s>1$, are also considered. They naturally also involve higher-order Campanato spaces $\mathcal L^{k,\varphi }(\rn)$, defined via the integral distance of functions from polynomials of order $k$. Notice that $\mathcal L^{0,\varphi }(\rn)= \mathcal L^{\varphi }(\rn)$.
Dealing with higher-order spaces calls for a normalization of trial functions. The higher-order embeddings have the form
\begin{equation}\label{emb-intro2}
 V^{s,A}_{d, k+1}(\rn) \to \mathcal{L}^{k, \varphi}(\rn),  \end{equation}
where $k \in \{0, 1, \dots, [s]-1\}$ and
$V^{s,A}_{d, k+1}(\rn)$ denotes the space of those functions in $V^{s,A}(\rn)$ all of whose derivatives of orders from  $k+1$ to $[s]$, the integer part of $s$, decay near infinity in a  weak sense. Such a normalization is required since functions which differ by a polynomial of degree between
$k+1$ and $[s]$ share the same seminorm in $V^{s,A}(\rn)$ but not in $\mathcal L^{k,\varphi }(\rn)$. 
\\ If $k=[s]$, then \eqref{emb-intro2} can be replaced with
\begin{equation}\label{emb-intro3}
 V^{s,A}(\rn) \to \mathcal{L}^{[s], \varphi}(\rn). 
 \end{equation}
Let us notice that, because of scaling reasons in $\rn$, an embedding like \eqref{emb-intro2} or \eqref{emb-intro3} can only hold if  $s<n+k-1$. The optimal 
target spaces $\mathcal{L}^{k, \varphi}(\rn)$  in \eqref{emb-intro2} and \eqref{emb-intro3} are  determined via substantially different arguments from those required for $s\in (0,1)$. Furthermore, the result has some traits which do not surface for $s\in (0,1)$. For instance,
 unlike  \eqref{emb-intro1}, the embeddings \eqref{emb-intro2} and \eqref{emb-intro3} require that $A$ satisfy a suitable decay condition near $0$, depending on $n$ and $s$. 

The use of functions $A$ and $\varphi$, which are not mere powers, discloses new features of fractional embeddings of Sobolev type into Campanato spaces built upon $\varphi$. One discrepancy from parallel embeddings into classical Campanato spaces is that the embedding \eqref{emb-intro1}, with $\varphi$ vanishing at zero, need not ensure an embedding into the space of continuous functions. Moreover, even if this is the case, the modulus of continuity of functions in $V^{s,A}(\rn)$ can be weaker than $\varphi$. This phenomenon usually emerges when $\varphi$ decays very slowly at zero --  logarithmically, for example. 

Our results about embeddings of the form \eqref{emb-intro1}
for $s\in (0,1)$ are presented in Section \ref{sec3}. The embeddings \eqref{emb-intro2}  for $s>1$ are dealt with in Section \ref{sec4} and \ref{sec5}. The most general result about \eqref{emb-intro2} and \eqref{emb-intro3} is stated in Section \ref{sec5}. In Section \ref{sec4} the basic case when $k=0$ is singled out. Although its proof follows along the same lines as in the general case,  it avoids some technicalities, and can be of use for readers who are just interested in zero-order Campanato spaces. The statements and proofs of our main results rely upon 
 several definitions and preliminary properties,  which are collected in Section \ref{S:2}.

\section{Background and preliminaries}\label{S:2}

 \subsection{Campanato type spaces}\label{2.0}

We say that a function
$\varphi\colon(0,\infty)\to(0,\infty)$, is \emph{admissible} if
\begin{equation}\label{E:admissible-new}
    \inf_{r \in [a,\infty)} \varphi(r)>0\qquad \text{for  $a\in (0,\infty)$.}
\end{equation}
Given an admissible function 
$\varphi$, the \emph{Campanato space} 
$\mathcal{L}^{\varphi}(\rn)$ is defined as the collection of all functions $u\in L^{1}_{\operatorname{loc}}(\rn)$ such that the seminorm
\begin{equation}\label{Campanato}
    |u|_{\mathcal{L}^{\varphi}(\rn)}= \sup _{B\subset \rn }
    \frac 1{\varphi (|B|^{\frac{1}{n}})} \medint_{B} |u - u_{B}| \; dx
\end{equation}
is finite, where the supremum is extended over all balls $B\subset \rn$. 
Here, $|B|$ denotes the Lebesgue measure of $B$ and $u_{B}= \medintinrigo_{B} u(x)\; dx$, where $\medintinrigo_{B}$ stands for the integral mean over $B$.

We also consider the \emph{vanishing Campanato space} $V\!\mathcal{L}^\varphi(\rn)$ defined as the subset of $\mathcal{L}^{\varphi}(\rn)$  of those functions $u\in \mathcal{L}^{\varphi}(\rn)$   such that, in addition,
\begin{equation}\label{def_V}
    \lim_{r\to 0^+}  \sup_{B\subset \rn, |B|\leq r} \; \frac 1{\varphi (|B|^{\frac{1}{n}})}\medint_{B} |u - u_{B}| \; dx =0\,.
\end{equation}
In the particular case when $\varphi(t)=1$, the space $\mathcal{L}^{\varphi}(\rn)$ is the set of all functions of \emph{bounded mean oscillation}, denoted by $\BMO (\rn)$, and the corresponding space $V\!\mathcal{L}^{\varphi}(\rn)$ is the collection of all functions of \emph{vanishing mean oscillation} denoted by $\VMO (\rn)$. 

Given $k\in\N$, we define the \emph{$k$-th order Campanato space} 
$\mathcal{L}^{k,\varphi}(\rn)$  as the collection of all functions $u\in   W^{k,1}_{\operatorname{loc}}(\rn)$ such that the seminorm
\begin{equation}\label{Camp_k}
|u|_{\mathcal{L}^{k,\varphi}(\rn)}
    = 
    \sup_{B\subset\rn} 
    \frac{1}{\varphi (|B|^{\frac{1}{n}})\, 
    |B|^{\frac{k}{n}}}
    \medint_{B}|u-P^k_{B}[u] | \; dx
\end{equation}
 is finite, where $P^k_{B} [u]$ is the (unique)  polynomial of degree at most $k$ such that
\begin{equation}\label{Poli_cond}
  \int_{B} \left( \nabla ^h u - \nabla ^h P^k_{B} [u] \right)\; dx =0 \qquad \hbox{for $h=0, \ldots , k$}.
\end{equation}
The $k$-th order vanishing Campanato space $V\!\mathcal{L}^{k,\varphi}(\rn)$ is defined accordingly.

 \subsection{Young functions}\label{2.1}
  Let   $A\colon  [0,
\infty ) \to [0, \infty ]$ be a \emph{Young function}, namely a convex,  left-continuous function, which is non-constant in $(0,\infty)$ and such that $A(0)=0$.
Note that the function
\begin{align}
    \label{monotone}
    \frac{A(t)}{t} \quad \text{is non-decreasing in $(0, \infty)$.}
\end{align}
Moreover,
\begin{equation}\label{Ak}
\lambda A(t) \leq A(\lambda t) \qquad \text{for $\lambda >1$ and $t\geq 0$.}
\end{equation}
The \emph{Young conjugate} $\widetilde{A}$   of $A$   is also a Young function and is defined as
\begin{equation}\label{2.8}
\widetilde{A}(t) = \sup \{\tau t-A(\tau ):\,\tau \geq 0\}  \qquad \text{for $t\geq 0$.}
\end{equation}
One has that
\begin{equation}\label{AAtilde}
t \leq A^{-1}(t) \widetilde A^{-1}(t) \leq 2t \qquad \text{for $t\geq 0$,}
\end{equation}
where generalized inverses are defined as to be right-continuous. 
\\
As a~consequence of~\eqref{Ak}, one also has that
\begin{equation}\label{min}
    \min\{1, \lambda \}A^{-1}(t) \leq A^{-1}(\lambda t) \leq \max\{1, \lambda \}A^{-1}(t) \qquad\text{for $\lambda , t \geq 0$.}
\end{equation}
 A  Young function $A$ is said to \emph{dominate} another Young function $B$ \emph{globally} [resp. \emph{near zero}] [resp. \emph{near infinity}]
   if there exist positive
 constants $c$ and $t_0$ such that
\begin{equation}\label{B.5bis}
B(t)\leq A(c t) \qquad \text{for $ t\geq 0$ \, [for $0\leq  t\leq t_0$] \, [for $t\geq t_0$]}.
\end{equation}
The functions $A$ and $B$ are called \emph{equivalent  globally} [\emph{near zero}] [\emph{near infinity}],   if they dominate each other in the respective range of values of their argument.
We shall write
$$A \simeq B$$ to denote that $A$ is equivalent to $B$.
\\
By contrast,  
 we shall use the relation $B\lesssim A$ between two (not necessarily Young) functions to denote that $B$ is bounded by $A$ up to positive multiplicative constants depending on appropriate quantities; the 
 relation $ A \approx B$ means that both $B\lesssim A$ and $A\lesssim B$.
\\
The \emph{Matuszewska-Orlicz indices} 
of a Young function $A$ near $0$ and near infinity are defined as
\begin{equation}\label{index}
   i_0(A) 	= \lim_{\lambda \to 0^+} \frac{\log \lambda}{\log \Big(\liminf_{t\to0^+} \frac{A^{-1}(\lambda t)}{A^{-1}(t)} \Big)} \qquad \text{and} \qquad    i_\infty(A) 	= \lim_{\lambda \to 0^+} \frac{\log \lambda}{\log \Big(\liminf_{t\to\infty} \frac{A^{-1}(\lambda t)}{A^{-1}(t)} \Big)}.
\end{equation}
If $A$ is finite-valued and vanishes only at $0$, then the following alternative expressions for $i_0(A)$ and $i_\infty(A)$ hold:
\begin{equation}\label{indexbis}
i_0(A) = \lim_{\lambda \to \infty} \frac{\log \Big(\liminf _{t\to 0^+}\frac{A(\lambda t)}{A(t)}\Big)}{\log \lambda} \qquad \text{and} \qquad   i_\infty(A) =\lim_{\lambda \to \infty} \frac{\log \Big(\liminf _{t\to \infty}\frac{A(\lambda t)}{A(t)}\Big)}{\log \lambda}.
\end{equation}
 In what follows, we shall make use of the fact that, if $A$ is a Young function and $s\in (0,n)$,  then
\begin{align}
    \label{equiv-cond}
     \int_0 \left(\frac t{A(t)}\right)^{\frac {s}{n-s}} \; dt<\infty \qquad \text{if and only if} \qquad \int_0 \frac{\widetilde A (t)}{t^{1+\frac{n}{n-s}}}\, dt<\infty,
\end{align}
and 
\begin{align}
    \label{equiv-cond-inf}
     \int^\infty \left(\frac t{A(t)}\right)^{\frac {s}{n-s}} \; dt<\infty \qquad \text{if and only if} \qquad \int^\infty\frac{\widetilde A (t)}{t^{1+\frac{n}{n-s}}}\, dt<\infty.
\end{align}

\subsection{Orlicz spaces and rearrangement-invariant spaces}
Let $\Omega$ be a measurable set in $\rn$. We denote by $\mathcal M(\Omega)$ the set of all  measurable real-valued functions on $\Omega$. The \emph{Orlicz space} $L^A(\Omega)$ is the Banach space of all    functions $u\in \mathcal M(\Omega)$ for which the \emph{Luxemburg norm} defined by
\begin{equation}\label{lux}
     \|u\|_{L^A(\Omega)}= 
     \inf \left\{ \lambda >0 :  \int_{\Omega}A
    \left( \frac{|u|}{\lambda} \right) \,dx
    \leq 1 \right\}\,
\end{equation}
is finite. In particular, $L^A (\Omega)= L^p (\Omega)$ if $A(t)=
t^p$ for some $p \in [1, \infty )$, and $L^A (\Omega)= L^\infty
(\Omega)$ if $A(t)=0$ for $t\in [0, 1]$ and $A(t) = \infty$ for
$t\in (1, \infty)$.
\\ 
If $E\subset\Omega$ is  a measurable set, then
\begin{equation}\label{E:norm-characteristic}
    \|\chi_E\|_{L^A(\Omega)}=
    A^{-1}\left(\frac{1}{|E|}\right).
\end{equation}
\\
One has that
\begin{equation}\label{dualA}
\|v\|_{L^{\widetilde A}(\Omega)} \leq \sup_{u \in L^A(\Omega)}\frac{\displaystyle \int_{\Omega} u v \, dx}{\|u\|_{L^A(\Omega)}} \leq 2\, \|v\|_{L^{\widetilde A}(\Omega)}
\end{equation}
for $v \in L^{\widetilde A}(\Omega)$. In the special case when $\Omega = (\alpha,\beta) \subset \R$ for some $\alpha < \beta$,   and $f\colon (\alpha,\beta) \to [0, \infty)$ is a non-increasing function, one has that  
\begin{equation}\label{E:dual-monotone}
    \|f\|_{L^{\widetilde A}(\alpha,\beta)} 
    \leq \sup_{g \in L^A(\alpha,\beta),\ g\downarrow}\frac{\displaystyle \int_{\alpha}^{\beta} f(t)\, g(t)\, dt}{\|g\|_{L^A(\alpha,\beta)}} \leq 2\, \|f\|_{L^{\widetilde A}(\alpha,\beta)}.
\end{equation}
This is a consequence of  the Hardy-Littlewood inequality for rearrangements.
Here, the notation $g\downarrow$ indicates that  the supremum is restricted to nonnegative functions which are non-increasing on $(\alpha,\beta)$. 

 The Orlicz spaces belong to the family of rearrangement-invariant spaces, whose definition relies upon the notion of decreasing rearrangement of a function.
The
 \emph{decreasing
rearrangement} $u^{\ast}\colon [0, \infty) \to [0, \infty]$ of a function $u\in \mathcal M (\Omega)$ is given by
\begin{equation}\label{u*}
u^{\ast}(r) = \inf \{t\geq 0: |\{x\in \Omega: |u(x)|>t \}|\leq r \} \qquad \text{for $r\geq 0$.}
\end{equation}
In other words, $u^{\ast}$
is the (unique) non-increasing,
right-continuous function
which is equidistributed with $u$. Given  $u\in \mathcal M (\Omega)$, we also define the function $u^{**}\colon (0, \infty) \to [0, \infty]$ by
\begin{equation*}
    u^{**}(r)
    =
    \frac1r \int_{0}^r
    u^*(\varrho)\,d\varrho
    \qquad
    \text{for $r>0$.}
\end{equation*}

A \emph{rearrangement-invariant space} is a
Banach function space $X(\Omega)$, in the sense of Luxemburg \cite[Chapter 1, Section 1]{BS},   such that
\begin{equation}\label{B.1}
 \|u\|_{X(\Omega)} = \|v \|_{X(\Omega)} \qquad \text{whenever $u^*=v^*$.}
 \end{equation}
The \emph{associate space} $X^{'}(\Omega)$ of $X(\Omega)$ is the rearrangement-invariant
space of all functions in $\mathcal M(\Omega)$ for which the norm
\begin{equation}\label{B.2}
 \|u \|_{X^{'}(\Omega)} =
\sup_{v \neq 0} \frac{\displaystyle\int _{\Omega}|u v| \;dx}{\|v \|_{X(\Omega)}}
\end{equation}
is finite.
As a consequence, the H\"older type inequality
\begin{equation}\label{holder}
\int _{\Omega} |u(x) v(x)|\,dx \leq \|u\|_{X(\Omega)}
\|v\|_{X^{'}(\Omega )}
\end{equation}
holds for every $u \in X(\Omega)$ and $v\in X^{'}(\Omega)$. Note that, owing to \eqref{dualA}, if $A$ is a Young function and $X(\Omega)=L^A(\Omega)$, then $X'(\Omega)=L^{\widetilde A}(\Omega)$, up to equivalent norms.
\\
Given a  rearrangement-invariant space $X(\Omega)$,  there exists a  rearrangement-invariant space on $(0,\infty)$, denoted by $X(0,\infty)$ and called the \emph{representation space} of $X$, such that
\begin{equation}\label{B.3}
\|u \|_{X(\Omega)} = \|u^{\ast} \|_{X(0,\infty)}
\end{equation}
for every $u\in X(\Omega)$.

\subsection{Integer-order Sobolev type spaces}
 Given $m \in \N$ and a  Young function $A$, we
denote by $V^{m,A}( \rn)$ the  \emph{homogeneous Orlicz-Sobolev space} given by
\begin{equation}\label{homorliczsobolev}
V^{m,A}(\rn ) = \{ u\in \mathcal M(\rn):  \text{$u$ is   $m$-times weakly differentiable in $ \rn$ and   $|\nabla ^m u| \in L^A( \rn)$}\}.
\end{equation}
The alternate notation $V^mL^A(\rn)$ for $V^{m,A}(\rn )$ will also be employed when convenient.
The functional
$$\|\nabla ^m u\|_{L^A(\rn)}$$
defines a seminorm on the space $V^{m,A}(\rn)$.
\\
More generally, the \emph{$m$-th order homogeneous Sobolev space} $V^mX(\rn)$ associated with 
 a rearrangement-invariant space 
$X(\rn)$ is defined as 
\begin{equation}\label{homrisobolev}
V^{m}X(\rn ) = \{ u\in \mathcal M(\rn):  \text{$u$ is   $m$-times weakly differentiable in $ \rn$ and   $|\nabla ^m u| \in X( \rn)$}\}.
\end{equation}
The space $V^mX(\rn)$ is equipped with the seminorm $\|\nabla ^m u\|_{X(\rn)}$.

The following result provides us with an optimal Sobolev embedding into Campanato type spaces and is a consequence of  \cite[Theorem 1.1]{CP-2003}.
  Although that theorem is stated for function spaces defined on cubes, its proof applies also to the present version in $\rn$.

 \begin{theoremalph}\label{CP} \emph {Let $X(\rn)$ be a rearrangement-invariant space and let $\varphi_X \colon  (0,\infty) \to [0, \infty]$ be the function defined by
\begin{equation}\label{varpi}
 \varphi_X (r) = r^{-n}\,\|\varrho^{\frac 1n} \chi_{(0, r^n)} (\varrho) \|_{X' (0, \infty)}\qquad \text{for $r > 0$.}
 \end{equation}
 Then,
\begin{equation}\label{CP1}
     V^1X(\rn) 
     \to 
     \mathcal L^{\varphi_X}(\rn)
 \end{equation}
 and
\begin{equation}\label{CP1-bis}
     |u|_{\mathcal L^{\varphi_X}(\rn)}
     \leq c\,
     \|\nabla u\|_{X(\rn)}
 \end{equation}
for some constant   $c=c(n)$ and every $u  \in V^1X(\rn)$. Moreover, $\mathcal L^{\varphi_X}(\rn)$ is the optimal Campanato target space in \eqref{CP1} and~\eqref{CP1-bis}.}
 \end{theoremalph}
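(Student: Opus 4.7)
The plan is to establish the upper bound (which gives the embedding) and the optimality by classical rearrangement techniques.

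For the upper bound, the first step is to derive a sharp rearrangement-type Poincar\'e inequality on balls: for every $u\in W^{1,1}(B)$,
\[
\int_B |u-u_B|\,dx \leq c_n \int_0^{|B|} |\nabla u|^*_B(\varrho)\, \varrho^{1/n}\, d\varrho,
\]
where $|\nabla u|^*_B$ is the decreasing rearrangement of $|\nabla u|\chi_B$ and $c_n$ depends only on $n$. This comes from the relative isoperimetric inequality on a ball combined with a P\'olya--Szeg\H{o}--Talenti type symmetrization argument adapted to the mean-value (rather than zero-trace) normalization. Once this is in hand, the Hardy--Littlewood inequality together with the duality \eqref{B.2} applied in the representation space $X(0,\infty)$ yields
\begin{align*}
\int_0^{|B|} |\nabla u|^*_B(\varrho)\, \varrho^{1/n}\, d\varrho
&\leq \||\nabla u|^*_B\|_{X(0,\infty)}\, \|\varrho^{1/n}\chi_{(0,|B|)}\|_{X'(0,\infty)}\\
&\leq \|\nabla u\|_{X(\rn)}\, \|\varrho^{1/n}\chi_{(0,r^n)}\|_{X'(0,\infty)},
\end{align*}
with $r=|B|^{1/n}$. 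Dividing by $|B|=r^n$ produces the factor $\varphi_X(r)$, and taking the supremum over all balls $B\subset\rn$ delivers \eqref{CP1-bis}, whence \eqref{CP1}.

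For the optimality, I would show that any admissible $\varphi$ with $V^1X(\rn)\hookrightarrow \mathcal L^\varphi(\rn)$ satisfies $\varphi_X\lesssim\varphi$ on $(0,\infty)$. For each ball $B_0=B(x_0,r_0)$ of measure $|B_0|$, I would construct a radially symmetric test function that essentially saturates the upper bound. Concretely, pick a non-increasing $g\geq 0$ supported in $(0,|B_0|/2)$ which nearly attains the supremum in the dual representation \eqref{E:dual-monotone} of $\|\varrho^{1/n}\chi_{(0,|B_0|)}\|_{X'(0,\infty)}$, and set $u_{B_0}(x)=\int_{|x-x_0|}^{r_0} g(\omega_n \rho^n)\,d\rho$ for $x\in B_0$, extended by $0$ outside $B_0$. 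A direct computation gives $|\nabla u_{B_0}|^*_{\rn}=g$, so $\|\nabla u_{B_0}\|_{X(\rn)}\leq 1$; while integration by parts exploiting the radial monotonicity of $u_{B_0}$ yields $\int_{B_0}|u_{B_0}-(u_{B_0})_{B_0}|\,dx\gtrsim \int_0^{|B_0|} g(s)s^{1/n}\,ds\gtrsim \|\varrho^{1/n}\chi_{(0,|B_0|)}\|_{X'(0,\infty)}= |B_0|\,\varphi_X(|B_0|^{1/n})$ by the near-extremality of $g$. Inserting $u_{B_0}$ into the assumed embedding then forces $\varphi_X(|B_0|^{1/n})\lesssim \varphi(|B_0|^{1/n})$ for every $B_0$, i.e.\ $\mathcal L^{\varphi_X}(\rn)\hookrightarrow \mathcal L^\varphi(\rn)$.

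The main obstacle is the sharp rearrangement Poincar\'e inequality of the first step. Indeed, the naive pointwise bound $|u(x)-u_B|\leq c_n \int_B |\nabla u(y)|\,|x-y|^{1-n}\,dy$ followed by Fubini produces only $\medint_B|u-u_B|\,dx\leq c_n r_B \medint_B|\nabla u|\,dx$, which loses the $\varrho^{1/n}$ weight --- precisely the factor that turns the target into $\mathcal L^{\varphi_X}$ rather than a cruder Campanato space. The correct derivation instead passes through the rearrangement-gradient comparison $-(u^*_B)'(t)\cdot n\omega_n^{1/n} t^{1-1/n}\lesssim |\nabla u|^{**}_B(t)$ coming from the relative isoperimetric inequality on $B$, followed by integration by parts to handle the mean-zero normalization. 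Once this is secured, the duality-based upper estimate and the extremizer-based optimality argument are standard.
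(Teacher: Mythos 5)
The paper does not prove Theorem~A itself; it attributes it to \cite[Theorem~1.1]{CP-2003}, so your proposal is a reconstruction of the argument in that reference rather than of a proof given here. Your overall strategy (a rearrangement Poincar\'e inequality on balls, then H\"older duality for the upper bound, then radial extremizers for the optimality) is indeed the right one, but the pivotal inequality you state in the first step is false. Take $n=2$, $B=B(0,1)$, and $u_\delta(x)=\max\bigl(0,\min\bigl(1,\tfrac{x_1}{\delta}+\tfrac12\bigr)\bigr)$. Then $\int_B|u_\delta-(u_\delta)_B|\,dx\approx 1$ uniformly in $\delta$, while $|\nabla u_\delta|=\tfrac1\delta$ on a set of measure $\approx\delta$ in $B$, so $|\nabla u_\delta|^*_B\approx\tfrac1\delta\chi_{(0,\delta)}$ and $\int_0^{|B|}|\nabla u_\delta|^*_B(\varrho)\varrho^{1/2}\,d\varrho\approx\delta^{1/2}\to0$. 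The correct estimate is the one you only allude to at the very end: it must carry $|\nabla u|^{**}_B$, not $|\nabla u|^*_B$, and (after Fubini) it reads
\begin{equation*}
\int_B|u-u_B|\,dx \;\lesssim\; \int_0^{|B|}|\nabla u|^*_B(s)\bigl(|B|^{1/n}-s^{1/n}\bigr)\,ds\,.
\end{equation*}
For the step function this right-hand side is $\approx1$, consistent with the left. You cannot then apply H\"older in the naive way you wrote, because $\||\nabla u|^{**}\|_{X(0,\infty)}$ is not controlled by $\|\nabla u\|_{X(\rn)}$ for a general rearrangement-invariant $X$; instead you must pair $|\nabla u|^*_B$ with the kernel $(|B|^{1/n}-s^{1/n})\chi_{(0,|B|)}$, and observe via concavity that this kernel is pointwise dominated by $(|B|-s)^{1/n}\chi_{(0,|B|)}$, which is precisely the decreasing rearrangement of $\varrho^{1/n}\chi_{(0,|B|)}$, so that $\bigl\|(|B|^{1/n}-s^{1/n})\chi_{(0,|B|)}\bigr\|_{X'}\le\bigl\|\varrho^{1/n}\chi_{(0,|B|)}\bigr\|_{X'}$ and the factor $\varphi_X(|B|^{1/n})|B|$ falls out as claimed. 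This Fubini-and-dominating-kernel step is what turns the otherwise lossy $|\nabla u|^{**}$ into the sharp target $\mathcal L^{\varphi_X}$.

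The optimality half also has a gap as written. You claim you can pick a non-increasing $g$ supported in $(0,|B_0|/2)$ that nearly attains the supremum in the dual representation of $\|\varrho^{1/n}\chi_{(0,|B_0|)}\|_{X'}$. This is not automatic: a generic near-extremizer for that norm need not be supported in $(0,|B_0|/2)$, and truncating it there can destroy near-extremality because the increasing weight $s^{1/n}$ places its mass near $s=|B_0|$. The statement \emph{is} true, but it needs a separate argument. The cleanest one is to note that $\|\varrho^{1/n}\chi_{(0,a)}\|_{X'}\approx a^{1/n}\|\chi_{(0,a)}\|_{X'}$ (both inequalities follow from the pointwise comparison of $\varrho^{1/n}\chi_{(0,a)}$ with $a^{1/n}\chi_{(0,a)}$ and the doubling of the fundamental function), and then simply take the normalized characteristic function $g_0=\chi_{(0,|B_0|/2)}/\|\chi_{(0,|B_0|/2)}\|_{X(0,\infty)}$; the identity $\varphi_X(t)\varphi_{X'}(t)=t$ for fundamental functions of an r.i.~space and its associate then shows $\int_0^{|B_0|/2}g_0(s)s^{1/n}\,ds\approx|B_0|^{1/n}\|\chi_{(0,|B_0|)}\|_{X'}\approx\|\varrho^{1/n}\chi_{(0,|B_0|)}\|_{X'}$. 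With this choice your radial test function does saturate the bound, and the rest of your optimality argument goes through.
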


 \subsection{Fractional Orlicz-Sobolev spaces}\label{2.2}

Given $s\in (0,1)$,
the seminorm $|u|_{s,A, \rn}$ is  defined as
\begin{equation}\label{aug340}
|u|_{s,A, \rn}=  \inf\Big\{\lambda>0: J_{s,A}\Big(\frac u{\lambda}\Big)\leq 1\Big\}
\end{equation}
for $u \in \mathcal M (\rn)$, where $J_{s,A}$ is the functional given by
\begin{equation}\label{intro1}
    J_{s,A}(u)=
\int_{\rn} \int_{\rn}A\left(\frac{|u(x)-u(y)|}{|x-y|^s}\right)\frac{\,dx\,dy}{|x-y|^n}.
\end{equation}
The \emph{homogeneous
 fractional Orlicz-Sobolev space} $V^{s,A}(\rn)$ is defined as
\begin{equation}\label{aug341}
	V^{s,A}(\rn) = \big\{u \in \mathcal M (\rn):  |u|_{s,A, \rn}<\infty\}\,.
\end{equation}
The definitions of the seminorm $|u|_{s,A, \rn}$ and of the space $V^{s,A}(\rn)$ carry over to vector-valued functions $u$ just by replacing the absolute value of $u(x)-u(y)$ by the Euclidean norm  of the same expression in the definition of the functional $J_{s,A}$.
\\
The definition of $V^{s,A}(\rn)$ is extended to all $s\in (0, \infty) \setminus \N$ in a standard way. Namely, on
denoting, as above, by $[s]$ and  $\{s\}$ the integer part and the fractional part of $s$, respectively, so that
$$\{s\}= s-[s],$$
we define
\begin{equation}\label{aug343}
V^{s,A}(\rn ) = \{u \in \mathcal M(\rn): \text{$u$ is   $[s]$-times weakly differentiable in $\rn$ and  $ \nabla ^{[s]}u \in V^{\{s\}, A}(\rn)$}\}\,.
\end{equation}
Given $k \in \{0, 1, \dots , [s]\}$, 
the subspace $V^{s,A}_{d,k}(\rn)$ of those functions in $V^{s,A}(\rn)$ whose derivatives of orders between $k$ and $[s]$ decay near infinity 
is defined as
\begin{equation}\label{nov100higher1}
	V^{s,A}_{d,k}(\rn) =  \{ u \in V^{s,A}(\rn): |\{|\nabla ^h u|>t\}|<\infty\,\ \text{for  every $t>0$ and for $h=k, \dots ,[s]$}\}.
\end{equation}
Here, $\nabla ^0u$ stands for $u$.

In dealing with embeddings of fractional Orlicz-Sobolev spaces with smoothness parameter $s>1$ into Campanato spaces, a critical role is played by embeddings of the same kind of spaces into the corresponding optimal rearrangement-invariant target spaces. The latter embeddings were exhibited in \cite{ACPS_emb}. For the present use, only the  associate space of the optimal rearrangement-invariant target space is needed. A formula for its norm is provided by the next result.

\begin{theorem}\label{thm_Z}
    Let  $s\in (0, n) \setminus \N$ and let $A$ be a Young function such that
\begin{equation}\label{intzero'}
    \int_0 
    \left(\frac t{A(t)}\right)
    ^{\frac {s}{n-s}} \; dt<\infty.
\end{equation}
    Define the functional $\|\cdot\|_{Z_s(0,\infty)}$ as
    \begin{equation}\label{emb-ass}
        \|f\|_{Z_s(0,\infty)}
        = \|r^{\frac{s}{n}}f^{**}(r)\|_{L^{\widetilde{A}}(0,\infty)}
    \end{equation}
    for $f\in\M(0,\infty)$. Then, $\|\cdot\|_{Z_s(0,\infty)}$ is a~rearrangement-invariant norm on $(0,\infty)$. Denote by $\|\cdot\|_{X_s(0,\infty)}$ the norm defined by
    \begin{equation*}
        \|f\|_{X_s(0,\infty)}
        = \|f\|_{Z_s'(0,\infty)}
    \end{equation*}
    for $f\in\M(0,\infty)$. Then,
    \begin{equation}\label{E:emb-ri}
        V_{d,0}^{s,A}(\rn)
        \to
        X_s(\rn)
    \end{equation}
    and 
\begin{equation}\label{emb}
        \| u \|_{ X_s(\rn)} \leq c\, | \nabla^{[s]} u|_{\{s\}, A,\rn}
    \end{equation}
    for some constant $c=c(n,s)$ and every $u\in V_{d,0}^{s,A}(\rn)$. 
    Moreover, $X_s(\rn)$ is the optimal rearrangement-invariant target space in \eqref{E:emb-ri} and~\eqref{emb}.
\end{theorem}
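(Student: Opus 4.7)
The proof divides naturally into three tasks: verification of the rearrangement-invariant norm axioms for $\|\cdot\|_{Z_s(0,\infty)}$, establishment of the embedding \eqref{E:emb-ri} with the quantitative estimate \eqref{emb}, and the optimality assertion.

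For the first task I would check directly the Banach function norm axioms. Positivity and positive homogeneity are immediate; the triangle inequality follows from the subadditivity $(f+g)^{**}\leq f^{**}+g^{**}$ combined with the norm property of $\|\cdot\|_{L^{\widetilde A}}$; rearrangement invariance holds because $f^{**}$ depends on $f$ only through $f^*$; monotonicity in $|f|$ and the Fatou property inherit from the corresponding properties of $f\mapsto f^{**}$ and of the Luxemburg norm. The delicate axioms, finiteness of $\|\chi_E\|_{Z_s(0,\infty)}$ for $|E|<\infty$ and local integrability of elements of $Z_s$, both reduce to verifying that the model functions $r^{s/n}\min\{1,c/r\}$ belong to $L^{\widetilde A}(0,\infty)$. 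The behavior at infinity is elementary, while the behavior near zero is governed precisely by the assumption \eqref{intzero'} via the equivalence \eqref{equiv-cond}. Once these axioms are in place, $\|\cdot\|_{X_s(0,\infty)}=\|\cdot\|_{Z_s'(0,\infty)}$ is automatically a rearrangement-invariant norm by the general theory of associate spaces.

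For the embedding and its sharpness, I would appeal to the one-dimensional reduction developed in \cite{ACPS_emb}, which identifies the optimal rearrangement-invariant target of $V^{s,A}_{d,0}(\rn)$ through a Hardy-type operator acting on rearrangements and controlled by $|\nabla^{[s]}u|_{\{s\},A,\rn}$. Combining the Hardy--Littlewood inequality with the duality formula \eqref{E:dual-monotone}, which permits restriction to non-increasing test functions, the associate norm of that optimal target on $(0,\infty)$ can be identified with $\|\cdot\|_{Z_s(0,\infty)}$, from which \eqref{E:emb-ri} and \eqref{emb} follow. For the optimality of $X_s(\rn)$, given any rearrangement-invariant space $Y(\rn)$ with $V^{s,A}_{d,0}(\rn)\to Y(\rn)$, the identity $Z_s''=Z_s$ reduces the claim to proving $Y'(\rn)\to Z_s(\rn)$. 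I would establish this by testing the embedding hypothesis against suitably designed radially symmetric, radially non-increasing trial functions whose $V^{s,A}_{d,0}$-seminorms are explicitly computable and whose rearrangements are flexible enough to recover $\|v\|_{Z_s(0,\infty)}$ for arbitrary non-increasing $v\in Y'$.

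The hard part will be this last step: producing extremal trial functions that simultaneously lie in $V^{s,A}_{d,0}(\rn)$ with a controlled seminorm and whose rearrangements saturate the Hardy-type estimate underlying \eqref{emb}. This is precisely where the specific form of $Z_s$, namely the weight $r^{s/n}$ applied to $f^{**}$ rather than to $f^*$, matches the dual of the Hardy inequality arising from the one-dimensional reduction of \cite{ACPS_emb}; without that match one could neither deduce \eqref{E:emb-ri} nor infer optimality.
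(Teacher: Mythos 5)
Your outline follows the same overall strategy as the paper: verify the rearrangement-invariant norm axioms for $Z_s$, reduce the Sobolev estimate to a one-dimensional Hardy-type inequality on rearrangements, and establish optimality by testing against suitably rearranged trial functions. The paper, however, does not carry out any of this directly. After showing (via \eqref{equiv-cond} and a change of variables) that \eqref{intzero'} forces the model function $(1+r)^{-1+s/n}$ to lie in $L^{\widetilde A}(0,\infty)$, it invokes \cite[Theorem~E]{ACPS_emb} as a black box: that theorem simultaneously delivers the rearrangement-invariant norm property of $\|\cdot\|_{Z_s(0,\infty)}$, the Hardy-type inequality \eqref{july100}, and the optimality of $X_s$ therein. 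The paper then transfers all of this to the Sobolev embedding \eqref{emb}, including optimality, by citing \cite[Theorem~3.7]{ACPS_inf}. In other words, the ``hard part'' you flag at the end --- constructing extremal trial functions that saturate the Hardy estimate --- is precisely what is proved in those cited theorems; the paper does not re-prove it, and a self-contained write-up along your lines would amount to re-deriving a substantial portion of both references.

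One small correction to your axiom check: for the model function $r^{s/n}\min\{1, c/r\}$ in $L^{\widetilde A}(0,\infty)$, the behavior near $r=0$ is elementary (there $\widetilde A(r^{s/n}/\lambda)\to 0$), while it is the behavior as $r\to\infty$, where the function decays like $r^{s/n-1}$, that is controlled by \eqref{intzero'} through the change of variables and \eqref{equiv-cond}. You have the two regimes swapped, though this does not affect the substance of the argument.
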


\begin{proof} 
     By equation \eqref{equiv-cond}, the  condition~\eqref{intzero'} is  equivalent to
    \begin{align}\label{fi2}
         \int_0 \frac{\widetilde A (t)}{t^{1+\frac{n}{n-s}}}\, dt<\infty.
    \end{align}
    The change of variables
    \begin{equation*}
        t = (1+r)^{-1+\frac{s}{n}}
    \end{equation*}
    shows that~\eqref{fi2} holds if and only if 
    \begin{align}\label{fi1}
        \int_{0}^{\infty}\widetilde{A}\left((1+r)^{-1+\frac{s}{n}}\right)\,dr<\infty.
    \end{align}
    Hence, by the definition of the Luxemburg norm, we have
        \begin{equation*}
        \|(1+r)^{-1+\frac{s}{n}}\|_{L^{\widetilde{A}}(0,\infty)}<\infty.
    \end{equation*}
    Owing to ~\cite[Theorem~E]{ACPS_emb},  the functional $\|\cdot\|_{Z_s(0,\infty)}$ is a~rearrangement-invariant norm, there exists a constant $c=c(n,s)$  such that
    \begin{equation}\label{july100}
        \left\|
        \int_{r}^{\infty}
        \varrho^{-1+\frac{s}{n}}f(\varrho)\,d\varrho
        \right\|_{X_s(0,\infty)}
        \le c\, \|f\|_{L^{A}(0,\infty)} 
    \end{equation}
    for $f\in L^A(0,\infty)$, and $X_s(0,\infty)$ is the optimal  rearrangement-invariant target space in \eqref{july100}.
    Therefore, 
    \cite[Theorem 3.7]{ACPS_inf} implies that the inequality~\eqref{emb} holds for $u\in V_{d,0}^{s,A}(\rn)$, and that $X_s(\rn)$ is the optimal rearrangement-invariant target space in \eqref{emb}.
\end{proof}

\section{Case $s\in (0,1)$}\label{sec3}

We begin by focusing  on embeddings of the form \eqref{emb-intro1}
 for $s\in (0,1)$. Given  a Young function $A$, the optimal Campanato target space in \eqref{emb-intro1} is associated with the function $\varphi_{s, A} \colon  [0, \infty) \to [0, \infty)$ defined as
\begin{equation}\label{1}
\varphi_{s, A} (r) = r^s A^{-1}  (r^{-n})\qquad \text{for } r> 0\,.
\end{equation}
This is the content of Theorem \ref{teo1} below, where uniform embeddings into vanishing Campanato spaces are also characterized. 
We say that $$V^{s, A}(\rn) \to V\!\mathcal{L}^{\varphi}(\rn) \qquad \text{uniformly}$$  if   $V^{s, A}(\rn) \to \mathcal L^{\varphi}(\rn)$ and
\begin{equation}\label{E:vanishing}
    \lim_{r\to 0^+}
    \sup_{|u|_{s, A, \rn} \leq 1 } 
    \left( 
    \sup_{B\subset \rn, |B| \leq r} \; \frac 1{\varphi (|B|^{\frac{1}{n}})}\medint _B |u - u_B| \; dx
    \right)=0.
\end{equation}

\begin{theorem}[\bf Optimal embeddings into Campanato spaces for $s\in (0,1)$]\label{teo1}
Assume that $s\in (0,1)$. Let  $A$ be a Young function and let $\varphi_{s, A}$ be the function defined as in \eqref{1}.
\\ (i) The embedding
\begin{equation}\label{2}
V^{s, A}(\rn) \to \mathcal{L}^{\varphi_{s,A}}(\rn)
\end{equation}
holds, and there exists a constant $c=c(n,s)$ such that
\begin{equation}\label{3}
|u|_{\mathcal{L}^{\varphi_{s,A}}(\rn)} \leq c \,|u|_{s, A, \rn}
\end{equation}
for every $u\in V^{s, A}(\rn)$.
Moreover, $\mathcal{L}^{\varphi_{s,A}}(\rn)$ is the optimal  Campanato target space in \eqref{2} and \eqref{3}.
\\ 
(ii) Let $\varphi$ be an admissible function. Then,
\begin{equation}\label{2v}
    V^{s, A}(\rn) 
    \to 
    V\!\mathcal{L}^{\varphi}(\rn)\qquad \text{uniformly}
\end{equation}
if and only if
 \begin{equation}\label{limit1}
\varphi_{s,A}(r)\leq c \, \varphi (r) \qquad \text{for $r>0$,}
\end{equation}
for some positive constant $c$
and
\begin{equation}\label{limit}
\lim_{r \to 0^+} \frac{\varphi_{s,A}(r)}{\varphi (r)} =0.
\end{equation}
\end{theorem}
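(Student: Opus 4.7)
The plan is to prove part~(i) in two stages---an elementary Jensen-type argument for the upper bound~\eqref{3}, followed by a test-function construction that pins down the minimality of $\varphi_{s,A}$---and then to deduce part~(ii) from (i) together with the same family of test functions. The decisive step will be the accurate calculation of the seminorm of an extremal scaled plateau; the rest is standard scaling.

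For the upper bound, I fix a ball $B$ of radius $r$, set $\lambda=|u|_{s,A,\rn}$, and start from
\[
\frac{1}{|B|}\int_{B}|u-u_{B}|\,dx\leq\frac{1}{|B|^{2}}\int_{B}\int_{B}|u(x)-u(y)|\,dx\,dy.
\]
I apply Jensen's inequality with the convex function $A$ and the probability measure $|B|^{-2}\,dx\,dy$ on $B\times B$, with integrand $|u(x)-u(y)|/(\lambda(2r)^{s})$. Since $|x-y|\leq 2r$ throughout $B\times B$, monotonicity of $A$ permits replacing $(2r)^{s}$ by $|x-y|^{s}$ in the argument, and a further bound $1\leq(2r)^{n}/|x-y|^{n}$ inserts the fractional kernel of $J_{s,A}$, yielding
\[
A\!\left(\frac{1}{|B|^{2}}\int_{B}\int_{B}\frac{|u(x)-u(y)|}{\lambda(2r)^{s}}\,dx\,dy\right)\leq\frac{(2r)^{n}}{|B|^{2}}J_{s,A}(u/\lambda)\leq\frac{c_{n}}{|B|}.
\]
Taking $A^{-1}$ of both sides and absorbing dimensional constants via~\eqref{min} produces~\eqref{3}.

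For the optimality, I pick a Lipschitz plateau $\eta$ equal to $1$ on $B_{1}(0)$ and supported in $B_{2}(0)$, and set $u_{r}(x)=\eta((x-x_{0})/r)$ for arbitrary $x_{0}\in\rn$ and $r>0$. The scaling identity $J_{s,A}(u_{r}/\lambda)=r^{n}J_{s,A}(\eta/(\lambda r^{s}))$ gives $|u_{r}|_{s,A,\rn}=r^{-s}F^{-1}(r^{-n})$, where $F(\mu):=J_{s,A}(\eta/\mu)$. A direct estimate of $F$, separating the double integral according as $|\eta(x)-\eta(y)|$ is of order one or controlled by the Lipschitz regime near the diagonal, yields $|u_{r}|_{s,A,\rn}\approx 1/\varphi_{s,A}(r)$; by scaling, the Campanato oscillation of $u_{r}$ on $B_{2r}(x_{0})$ equals that of $\eta$ on $B_{2}(0)$, namely a positive dimensional constant $c_{\ast}$. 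Feeding $u_{r}$ into any hypothetical continuous embedding $V^{s,A}(\rn)\to\mathcal{L}^{\varphi}(\rn)$ forces $c_{\ast}\varphi_{s,A}(r)\lesssim\varphi(2r)$ for every $r>0$, which after the trivial doubling bound $\varphi_{s,A}(r)\leq 2^{s}\varphi_{s,A}(r/2)$ yields $\varphi_{s,A}\lesssim\varphi$.

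Part~(ii) follows by combining~\eqref{3} with the same test functions. Sufficiency: \eqref{limit1} together with~\eqref{3} gives $V^{s,A}\to\mathcal{L}^{\varphi}$, and for $|u|_{s,A,\rn}\leq 1$,
\[
\sup_{B:\,|B|\leq r^{n}}\frac{1}{\varphi(|B|^{1/n})}\medint_{B}|u-u_{B}|\,dx\leq c\sup_{\rho\leq r}\frac{\varphi_{s,A}(\rho)}{\varphi(\rho)}\xrightarrow{r\to 0^{+}}0
\]
by~\eqref{limit}. Necessity: uniform vanishing embedding entails the continuous embedding, so part~(i) gives~\eqref{limit1}; inserting the normalized family $u_{r}/|u_{r}|_{s,A,\rn}$ into the supremum defining~\eqref{E:vanishing} bounds the latter below by a constant multiple of $\varphi_{s,A}(r)/\varphi(r)$ as $r\to 0^{+}$, forcing~\eqref{limit}. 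I expect the hardest point to be the sharp two-sided estimate $|u_{r}|_{s,A,\rn}\approx 1/\varphi_{s,A}(r)$: one must split $J_{s,A}(\eta/\mu)$ so as to capture correctly both the large-$|\eta(x)-\eta(y)|$ contribution (which drives the $A^{-1}(r^{-n})$ factor) and the near-diagonal Lipschitz regime, while keeping the $|x-y|^{-n}$ kernel under control.
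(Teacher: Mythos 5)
Your argument is correct, and the optimality step takes a genuinely different route from the paper's. For the upper bound \eqref{3} you apply Jensen directly to the $L^{1}$ average and pass to $A^{-1}$, a mild streamlining of the paper's two-step derivation (Jensen to get $\|u-u_{B_r}\|_{L^A(B_r)}\lesssim r^s|u|_{s,A,\rn}$, then H\"older and \eqref{AAtilde}); the two are equivalent. The real divergence is in the optimality part. The paper constructs the one-parameter radial family $u_f$ from an arbitrary non-increasing $f$ and imports the key seminorm bound $|u_f|_{s,A,\rn}\lesssim\|f\|_{L^A(0,\infty)}$ from \cite[Ineq. (6.15)]{ACPS_emb}, then lower-bounds the Campanato oscillation via the median and specializes $f$. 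You instead use a single scaled Lipschitz plateau $u_r$, the exact modular scaling $J_{s,A}(u_r/\lambda)=r^n J_{s,A}(\eta/(\lambda r^s))$, and a direct two-sided estimate of $F(\mu)=J_{s,A}(\eta/\mu)$. Your argument is therefore self-contained (no appeal to \cite{ACPS_emb}); in exchange you lose the generality of $u_f$, which the paper reuses in the higher-order sections. The ``hardest point'' you flag is exactly right, and the crucial lemma you should make explicit is that $\int_0^{T}A(t)\,\frac{dt}{t}\le A(T)$, a one-line consequence of \eqref{monotone}: it is what makes both the near-diagonal regime $\int_0^1A(LR^{1-s}/\mu)\,\frac{dR}{R}$ and the far-field regime $\int_1^{\infty}A(2R^{-s}/\mu)\,\frac{dR}{R}$ collapse to $\lesssim A(c/\mu)$ after the power change of variables. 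Note also that for parts (i)--(ii) you actually only need the one-sided bound $F(\mu)\lesssim A(c/\mu)$, hence $|u_r|_{s,A,\rn}\lesssim 1/\varphi_{s,A}(r)$; the matching lower bound is unnecessary. Finally, a minor imprecision: your doubling $\varphi_{s,A}(r)\le 2^{s}\varphi_{s,A}(r/2)$ is the wrong direction when $2\omega_n^{1/n}<1$ (large $n$); what is actually used, and what holds via \eqref{min}, is $\varphi_{s,A}(r)\le C_{\alpha}\varphi_{s,A}(\alpha r)$ for every fixed $\alpha>0$.
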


Let us take the chance to comment on a gap between embeddings into Campanato spaces and spaces of uniformly continuous functions for general  spaces $V^{s,A}(\rn)$.
Theorem \ref{teo1}, Part (i),  can be used to prove an embedding of a fractional Orlicz--Sobolev space $V^{s,A}(\rn)$ into 
$\mathcal C^{\varphi (\cdot)} (\rn)$ for some modulus of continuity $\varphi$,  under suitable assumptions on $s$ and $A$, via a result  from  \cite{Spanne}. However, as already mentioned in Section \ref{intro}, the conclusions that follow from this approach are not sharp in general. This gap between Campanato and H\"older type spaces does not surface in embeddings of classical Sobolev spaces.
\\ Specifically,  \cite[Corollaries 1 and 2]{Spanne} imply that
\begin{equation}\label{19}
\mathcal{L}^{\varphi_{s, A}} (\rn) \to \mathcal C^{\varphi (\cdot)} (\rn)
\end{equation}
 for some $\varphi$
if and only if
\begin{equation}\label{20}
    \int_0 \frac{\varphi_{s,A}(r)}{r}\; dr < \infty\,
\end{equation}
and the embedding \eqref{19} holds with 
\begin{align}
    \label{july102}
    \varphi(r)= \int_0^r \frac{\varphi_{s,A}(\rho)}{\rho}\; d\rho.
\end{align}
 The condition \eqref{20} reads
$$
\int_0 A^{-1}\left( \frac 1{t^n}\right )\, t^{s-1} \; dt<\infty\,,
$$
which, by a change of variables, turns into
\begin{equation}\label{21}
\int^\infty \frac {A^{-1} (t)}{t^{1+\frac sn}}
\; dt <\infty\,.
\end{equation}
However, the condition~\eqref{21} is in general essentially stronger than
\begin{equation}\label{E:weaker}
    \int^{\infty} 
    \left (\frac t{A(t)}\right)^{\frac s{n-s}} \; dt <\infty \,,
\end{equation}
which is, owing to~\cite[Theorem~4.2]{ACPS_modulus},
the optimal condition for an  embedding of the form
\begin{equation}\label{22}
    V^{s,A}(\rn) \to  \mathcal C^{\varphi (\cdot)} (\rn)
\end{equation}
 to hold for some $\varphi$. \\ The gap between \eqref{21} and \eqref{22} can be demonstrated, for instance, by considering a Young function $A$ satisfying
\begin{equation}\label{2024-50}
    A(t)\approx t^{\frac{n}{s}}
    \left(\log t\right)^{\alpha}
    \qquad\text{for $t$ near infinity,}
\end{equation}
where $\alpha\in\R$. For this choice of $A$,  the condition \eqref{21} is equivalent to $\alpha>\frac{n}{s}$, whereas~\eqref{E:weaker} amounts to $\alpha>\frac{n}{s}-1$. Furthermore,  in the common range of exponents  $\alpha>\frac{n}{s}$, formula \eqref{july102}
implies \eqref{22} with $\varphi(r) \approx \left(\log\left( 1+ \frac 1r\right)\right)^{1-\frac{s\alpha}{n}}$, whereas 
\cite[Example 4.8]{ACPS_modulus} yields the stronger and sharp conclusion that \eqref{22} holds for 
$\varphi (r)\approx \left(\log\left( 1+ \frac 1r\right)\right)^{1-\frac{s(\alpha+1)}{n}}$.
\\
In particular, this shows that, unlike the case of standard fractional Sobolev spaces, iteration does not always yield optimal results if embeddings  of fractional  Orlicz-Sobolev spaces into 
spaces of continuous functions
 are factored via (yet optimal) embeddings into Campanato type spaces.
 An analogous assertion concerns embeddings into $L^\infty(\rn)$.

\medskip
The following characterization of fractional Orlicz-Sobolev embeddings into $\BMO(\rn)$ and $\VMO(\rn)$ can be deduced from  Theorem \ref{teo1}. 

\begin{corollary}[\bf Embeddings into $\BMO$]\label{corollary1}
Let $s\in (0,1)$ and let $A$ be a Young function.
\\ (i)  One has that
\begin{equation}\label{BMO_emb}
V^{s,A}(\rn) \to \BMO(\rn)
\end{equation}
 if and only if
\begin{equation}\label{BMO_cond}
 \frac{A(t)}{t^{\frac ns}} \geq c \quad \quad\text{for $t>0$,}
\end{equation}
for some positive constant $c$.
\\ (ii)  One has that
\begin{equation}\label{VMO_emb}
V^{s,A}(\rn) \to VMO(\rn) \quad \text{uniformly}
\end{equation}
if and only if   \eqref{BMO_cond}
holds and \begin{equation}\label{VMO_cond}
\lim_{t \to \infty} \frac{A(t)}{t^{\frac ns}} = \infty.
\end{equation}
\end{corollary}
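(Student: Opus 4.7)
The plan is to derive both parts of the corollary as special instances of Theorem \ref{teo1}, taken with the admissible constant weight $\varphi\equiv 1$. Under this choice, $\mathcal{L}^{\varphi}(\rn)=\BMO(\rn)$ and $V\!\mathcal{L}^{\varphi}(\rn)=\VMO(\rn)$ by the identifications recalled in Section \ref{2.0}. The work is then reduced to translating the conditions on the weight $\varphi_{s,A}(r) = r^s A^{-1}(r^{-n})$ produced by Theorem \ref{teo1} into the pointwise conditions on $A$ stated in \eqref{BMO_cond} and \eqref{VMO_cond}.

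For part (i), I would first observe that $V^{s,A}(\rn) \to \BMO(\rn)$ is equivalent to $\varphi_{s,A}(r)\le c$ for every $r>0$. The forward implication follows from the optimality of $\mathcal{L}^{\varphi_{s,A}}(\rn)$ asserted in Theorem \ref{teo1}(i), which forces $\mathcal{L}^{\varphi_{s,A}}(\rn) \to \BMO(\rn)$, and hence, by the very definition \eqref{Campanato} of the Campanato seminorm, a uniform upper bound on $\varphi_{s,A}$; the converse is immediate from \eqref{2}. To translate this bound into \eqref{BMO_cond}, I would carry out the change of variable $t=A^{-1}(r^{-n})$, which yields $r=A(t)^{-1/n}$ and $\varphi_{s,A}(r)=t/A(t)^{s/n}$. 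The bound $\varphi_{s,A}(r)\leq c$ for all $r>0$ then becomes, after raising to the $n/s$ power, precisely $A(t)/t^{n/s}\geq c^{-n/s}$ for all $t>0$, where the passage from an inequality in $A^{-1}$ to one in $A$ uses the Young-function identity $t\leq A^{-1}(A(t))$.

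Part (ii) follows by applying Theorem \ref{teo1}(ii) to $\varphi \equiv 1$. Condition \eqref{limit1} reduces to the boundedness of $\varphi_{s,A}$ already handled in part (i) and is therefore equivalent to \eqref{BMO_cond}. Condition \eqref{limit} becomes $\lim_{r\to 0^+}\varphi_{s,A}(r)=0$; via the same substitution $t=A^{-1}(r^{-n})$, for which $r\to 0^+$ forces $t\to\infty$ whenever $A$ is finite-valued, the vanishing of $t/A(t)^{s/n}$ as $t\to\infty$ is precisely \eqref{VMO_cond}. I do not anticipate any essential obstacle: the only subtleties lie in the careful handling of generalized inverses when $A$ is not strictly increasing or is $+\infty$ on some interval, both of which can be dispatched by short case analyses using the properties \eqref{min} and \eqref{AAtilde}.
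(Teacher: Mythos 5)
The proposal is correct and follows the route the paper clearly intends: the paper offers no explicit proof of Corollary~\ref{corollary1}, stating only that it ``can be deduced from Theorem~\ref{teo1},'' and your argument is precisely that deduction — specialize to $\varphi\equiv1$, use the optimality in Part~(i) to show the embedding into $\BMO(\rn)$ is equivalent to $\sup_{r>0}\varphi_{s,A}(r)<\infty$, and then translate into a pointwise condition on $A$. One small point worth tightening in a written-up version: the phrase ``the change of variable $t=A^{-1}(r^{-n})$, which yields $r=A(t)^{-1/n}$'' is not literally an identity for a generalized inverse, so the translation is better split into two one-sided implications (for $\Leftarrow$, use $A(A^{-1}(u))\le u$; for $\Rightarrow$, use $t\le A^{-1}(A(t))$, plus the observation that $A^{-1}(u)\le cu^{s/n}$ forces $A^{-1}(0^+)=0$ and hence rules out $A$ vanishing on an interval) — but this is exactly the ``short case analysis'' you flagged, so the plan is sound.
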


\begin{proof}[Proof of Theorem \ref{teo1}] Throughout this proof, the relations $\lesssim$ and $\approx$ hold up to constants depending on $n$ and $s$. 
\\
Part (i). 
Let $u\in V^{s,A}(\rn)$, $t\in\R$, $r>0$ and let $B_r$ denote any ball in $\rn$ with radius $r$. By Jensen's inequality,
\begin{align}\label{4}
A\left (\frac{|t - u_{B_r}|}{(2r)^s} \right)  = A\left( \frac 1{|B_r|} \left|
\int_{B_r}\frac{t - u(y)}{(2r)^s}\; dy \right| \right)
\leq \frac 1{|B_r|} \int_{B_r} A\left(\frac{|t - u(y)|}{(2r)^s}\right)\; dy\,.
\end{align}
Applying  the inequality \eqref{4} with $t =u(x)$ and integrating with respect to $x$ over $B_r$ yields
\begin{align}\label{5}
&\int_{B_r} A\left (\frac{|u(x) - u_{B_r}|}{(2r)^s} \right)\; dx
\\ \cr & \leq \frac 1{|B_r|} \int_{B_r}  \int_{B_r} A\left (\frac{|u(x) - u(y)|}{(2r)^s} \right)\; dx \,dy
 \leq \frac {(2r)^n}{|B_r|} \int_{B_r}  \int_{B_r} A\left (\frac{|u(x) - u(y)|}{|x-y|^s} \right)\; \frac{dx\, dy}{|x-y|^n} \nonumber
 \\ \cr & \lesssim  \int_{B_r}  \int_{B_r} A\left (\frac{|u(x) - u(y)|}{|x-y|^s} \right)\; \frac{dx \, dy}{|x-y|^n}
 \lesssim \int_{\rn}  \int_{\rn} A\left (\frac{|u(x) - u(y)|}{|x-y|^s} \right)\; \frac{dx\, dy}{|x-y|^n}. \nonumber
\end{align}
Fix $\lambda>|u|_{s,A,\rn}$. An application of \eqref{5} with  $u$ replaced with $u/\lambda$ tells us that
\begin{equation}
    \int_{B_r} A\left (\frac{|u(x) - u_{B_r}|}{\lambda(2r)^s} \right)\; dx
        \lesssim
    J_{s,A}\Big(\frac u{\lambda}\Big).
\end{equation}
Therefore, owing to \eqref{aug340}, \eqref{lux} and~\eqref{Ak},
\begin{align}\label{7}
 \| u -u_{B_r}\|_{L^A(B_r)} &\lesssim\, r^s\,
 |u|_{s,A,\rn}\,.
\end{align}
On the other hand, by~\eqref{holder} and~\eqref{min}, 
\begin{align}
\int_{B_r} | u - u_{B_r}| \; dx & \leq   2 \, \| u - u_{B_r}\|_{L^A(B_r)} \, \|1\|_{L^{\widetilde{A}}(B_r)}\label{8}
\\  &
= 2 \, \| u -u_{B_r}\|_{L^A(B_r)} \, \frac 1{\widetilde{A}^{-1} \left ( \frac 1{|B_r|}\right)} \approx  \, \| u - u_{B_r}\|_{L^A(B_r)}  \, \frac 1{\widetilde{A}^{-1} \left ( \frac 1{r^n}\right)} \,.\nonumber
\end{align}
Combining  the inequalities \eqref{7} and \eqref{8} implies that
\begin{equation}\label{11}
\frac{\widetilde{A}^{-1} \left(\frac 1{r^n}\right)}{r^s} \int_{B_r} |u - u_{B_r}|\; dx \lesssim\, |u|_{s,A, \rn}.
\end{equation}
Thanks to~\eqref{AAtilde} and~\eqref{1}, the latter inequality can be rewritten as
\begin{equation}\label{11bis}
\frac{1}{\varphi_{s,A}(r)}\medint_{B_r} |u - u_{B_r}|\; dx \lesssim\, |u|_{s,A, \rn}\,.
\end{equation}
 The inequality \eqref{3} hence follows by
taking the supremum over all balls $B_r \subset \rn$.
\\
In order to prove the optimality of the function $\varphi_{s,A}$ in~\eqref{2} and~\eqref{3}, suppose that $\varphi$ is an admissible function such that
\begin{equation}\label{3varphi}
    |u|_{\mathcal{L}^{\varphi}(\rn)} 
    \leq c \, |u|_{s, A, \rn}
\end{equation}
for $u\in V^{s,A}(\rn)$.  Given any non-increasing function $f\colon  [0, \infty)\to [0, \infty)$,  define the function $u_f: \rn \to \R$ as
\begin{equation}\label{uf}
u_f(x)= \int_{\omega_n|x|^n}^{\infty} f(r)\, r^{-1+ \frac sn} \; dr \qquad \text{for } x\in \rn\,.
\end{equation}
Hence,
\begin{equation}\label{13}
u_f^*(r) = \int_r^\infty f(\varrho)\, \varrho^{-1 +\frac sn}\; d \varrho \qquad \hbox{for}\;\; r> 0\,,
\end{equation}
and, by \cite[Inequality (6.15)]{ACPS_emb},
\begin{equation}\label{12}
|u_f|_{s,A,\rn} \lesssim  \, \|f\|_{L^A(0, \infty)}\,.
\end{equation}
Fix any ball $B\subset\rn$ centered at $0$. Assume that ${\rm supp} \,f \subset (0, |B|)$, 
whence ${\rm supp}\,
 u_f \subset B$.
Then,  
\begin{align}\label{15}
\int_{B} |u_f - (u_f)_B| \; dx &\geq 
\int_B |u_f(x) - {\textrm{med}}_B(u_f) |\; dx 
\geq\int_0^{\tfrac{|B|}2} \left(u_f^* (r) - u_f^*\big(\tfrac{|B|}2\big)\right ) \; dr 
 \\
 & = \int_0^{\tfrac{|B|}2}  r\big( - {u_f^*}^{'}(r)\big) \; dr= \int_0^{\tfrac{|B|}2} f(r)\, r^{\frac sn}\, dr\,,
\nonumber
\end{align}
where ${\textrm{med}}_B(u_f) = u_f^*\big(\tfrac{|B|}2\big)$, the median of $u_f$ over $B$.
Note that the first inequality in \eqref{15} holds by the minimizing property of the median for the distance in $L^1$ from the space of constant functions, the first equality by the equimeasurability of $u_f$ and $u_f^*$, and the 
 second equality via integration by parts. Now, choose 
$f=\chi_{(0,|B|)}$.
Thus, by~\eqref{3varphi}, \eqref{Campanato}, \eqref{12}, \eqref{15}, \eqref{E:norm-characteristic} and~\eqref{1}, 
\begin{align*}
    c &\ge \frac{|u_f|_{\mathcal L^{\varphi}(\rn)}}{|u_f|_{s,A,\rn}}
    \ge 
    \frac{1}{|u_f|_{s,A,\rn}}
    \frac{1}{\varphi(|B|^{\frac{1}{n}})|B|}
    \int_B|u_f-(u_f)_B|\,dx
        \\
    &\gtrsim
    \frac{1}{\|f\|_{L^A(0,\infty)}}
    \frac{1}{\varphi(|B|^{\frac{1}{n}})|B|}
    \int_0^{\tfrac{|B|}2} f(r)
    \, r^{\frac sn}\, dr
        \\
    &\approx
    \frac{A^{-1} (1/|B|)|B|^{
    \frac{s}{n}}}{\varphi(|B|^{\frac{1}{n}})}
    =
    \frac{\varphi_{s,A}(|B|^{\frac{1}{n}})}{\varphi(|B|^{\frac{1}{n}})}.
\end{align*}
Thanks to the arbitrariness of $B$, this yields
\begin{equation*}
    \varphi_{s,A}(r)
    \le c\,
    \varphi(r)
    \qquad\text{for $r>0$,}
\end{equation*}
whence the optimality of $\varphi_{s,A}$ in~\eqref{2} and~\eqref{3} follows.
\\ Part (ii).  Assume that $\varphi$ is an admissible function such that  \eqref{limit1} and \eqref{limit} hold. Thanks to Part (i), the former equation ensures that
the embedding \eqref{2} holds. Moreover, the latter equation implies that
\begin{align*}
    \lim_{r\to0^+}
    &\sup_{|u|_{s,A,\rn}\le1}
    \sup_{B\subset\rn,\ |B|\le r}
    \frac{1}{\varphi(|B|^{\frac{1}{n}})}
    \medint_B
    |u-u_B|\,dx
        \\
    &=
    \lim_{r\to0^+}
    \sup_{B\subset\rn,\ |B|\le r}
    \frac{\varphi_{s,A}(|B|^{\frac{1}{n}})}{\varphi(|B|^{\frac{1}{n}})}
    \sup_{|u|_{s,A,\rn}\le1}
    \frac{1}{\varphi_{s,A}(|B|^{\frac{1}{n}})}
    \medint_B
    |u-u_B|\,dx
        \\
    &\lesssim
    \lim_{r\to0^+}
    \sup_{B\subset\rn,\ |B|\le r}
    \frac{\varphi_{s,A}(|B|^{\frac{1}{n}})}
    {\varphi(|B|^{\frac{1}{n}})}
    =0.
\end{align*}
Hence, \eqref{2v} follows. 
\\ Conversely, assume that $\varphi$ is an admissible function such that the embedding \eqref{2v} holds. By Part (i), the condition \eqref{limit1} must be fulfilled. As for the limit \eqref{limit},
note that
\begin{align}
\frac{\varphi_{s,A}(r)}{\varphi(r)}
    &=
    \frac{r^sA^{-1}(r^{-n})}{\varphi(r)} 
    \label{E:constant-function}
    \approx
    \frac{A^{-1}(r^{-n})}{r^{n}\varphi(r)}
   \int_{0}^{r^n}\varrho^{\frac{s}{n}}\,d\varrho 
   \qquad \text{for $r>0$.}
\end{align}

Fix $r>0$. Let $B$ be the ball centered at the origin  such that 
$|B|=2r^n$, and choose 
\begin{equation*}
    f=A^{-1}\left(\tfrac{2}{|B|}\right)\chi_{(0,\frac{|B|}{2})}
\end{equation*}
in \eqref{uf}.
Observe that $f$ is non-increasing, $\operatorname{supp} f\subset(0,|B|)$, and $\|f\|_{L^A(0,\infty)}=1$. Thus, by \eqref{12}  and \eqref{15},
\begin{align*}
    A^{-1}( {r^{-n}})
    \int_{0}^{r^n} \varrho^{\frac{s}{n}}\,d\varrho
    &\le
    \sup\left\{\int_{0}^{\frac{|B|}{2}}
    f(\varrho)\, \varrho^{\frac{s}{n}}\,d\varrho:
    \|f\|_{L^A(0,\infty)}\le1,
    \ \operatorname{supp} f\subset(0,|B|),\ f\downarrow\right\}
        \\
    &\lesssim
    \sup\left\{\int_{B}|u_f-(u_f)_B|\,dx:
    \|f\|_{L^A(0,\infty)}\le1,
    \ \operatorname{supp} f\subset(0,|B|),\ f\downarrow\right\}
        \\
    &\lesssim
    \sup\left\{\int_{B}|u-u_B|\,dx:
    |u|_{s,A;\rn}\le1,
    \ \operatorname{supp} u\subset B\right\}.
\end{align*}
Making use of this estimate in~\eqref{E:constant-function}, we obtain, via ~\eqref{2v}:
\begin{align*}
    \lim_{r\to0_+}
    \frac{\varphi_{s,A}(r)}{\varphi(r)}
    &\lesssim
    \lim_{r\to0_+}
    \frac{1}{r^n\varphi(r)} 
    \sup\left\{\int_{B}|u-u_B|\,dx:
    |u|_{s,A;\rn}\le1,
    \ \operatorname{supp} u\subset B\right\}
        \\
    &\lesssim
    \lim_{r\to0_+}
    \sup_{|u|_{s,A;\rn}\le1}
    \sup_{B\subset\rn,|B|\le 2r^n}
    \frac{1}{\varphi((|B|/2)^{\frac{1}{n}})}
    \medint_{B}|u-u_B|\,dx
    =0.
\end{align*}
Hence, \eqref{limit} follows.
\end{proof}

\section{Case $s>1$, $k=0$}\label{sec4}

Here, we are concerned with  embeddings as in \eqref{emb-intro2}
 for $k=0$. We begin by detecting the admissible smoothness parameters $s$ and Young functions $A$ for which such an embedding is possible.

\begin{theorem}[\bf Admissible $s$ and $A$ for $k=0$]\label{teo3}
Let $s\in (1, \infty)\setminus \N$ and let $A$ be a Young function.  Assume that the   embedding
\begin{equation}\label{july1}
    V_{d,1}^{s,A}(\rn) 
    \to  
    \mathcal{L}^{\varphi} (\rn)
\end{equation}
holds for some  admissible function $\varphi$.  Then, 
\begin{equation}\label{E:s-plus-1}
    s\in (1, n+1)\setminus \N,
\end{equation}
and
\begin{equation}\label{intzero}
    \int_0 
    \left (\frac t{A(t)}\right)
    ^{\frac {s-1}{n+1- s}} \; dt <\infty\,.
\end{equation}
\end{theorem}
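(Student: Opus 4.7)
The principal content of the statement is the integral condition \eqref{intzero}; the range \eqref{E:s-plus-1} will then follow because the exponent $(s-1)/(n+1-s)$ must be positive and finite for that integral to be meaningful. I would mirror the necessity portion of the proof of Theorem \ref{teo1}(i), testing the assumed embedding \eqref{july1} against a family of radial extremal functions.

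Given a non-increasing, compactly supported $f\colon(0,\infty)\to[0,\infty)$, my strategy is to construct a radial function $u_f\in V^{s,A}_{d,1}(\rn)$ of Riesz-potential type whose decreasing rearrangement is
\[
u_f^*(r)=\int_r^\infty f(\varrho)\,\varrho^{-1+\frac{s}{n}}\,d\varrho,
\]
and whose fractional Orlicz--Sobolev seminorm is controlled by
\[
|u_f|_{s,A,\rn}\lesssim\|f\|_{L^A(0,\infty)}.
\]
For $s\in(0,1)$ the latter bound is \cite[(6.15)]{ACPS_emb}; for $s>1$ one must iterate the Riesz-potential construction and invoke the optimal rearrangement-invariant framework provided by Theorem \ref{thm_Z}. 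The compact support of $f$ ensures that all derivatives of $u_f$ vanish outside a bounded set, placing $u_f$ in $V^{s,A}_{d,1}(\rn)$ (rather than merely in $V^{s,A}_{d,0}(\rn)$).

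The oscillation lower bound derived in \eqref{15}, namely
\[
\int_B|u_f-(u_f)_B|\,dx\ge\int_0^{|B|/2}f(\rho)\,\rho^{\frac{s}{n}}\,d\rho
\]
for balls $B$ centered at the origin, carries over verbatim because it relies only on radial monotonicity. Combining it with the embedding $|u_f|_{\mathcal{L}^\varphi(\rn)}\le c|u_f|_{s,A,\rn}$ and the seminorm estimate, then taking the supremum over non-increasing $f$ supported in $(0,|B|/2)$ with $\|f\|_{L^A(0,\infty)}\le1$, the dual inequality \eqref{E:dual-monotone} yields
\[
\|\rho^{s/n}\chi_{(0,|B|/2)}\|_{L^{\widetilde A}(0,\infty)}\lesssim\varphi(|B|^{\frac{1}{n}})\,|B|
\qquad\text{for every ball } B\subset\rn.
\]
Letting $|B|$ vary, a careful asymptotic analysis of the Luxemburg norm on the left, combined with the admissibility of $\varphi$ on the right and the equivalence \eqref{equiv-cond} with the shift $s\mapsto s-1$, translates this norm bound into the integrability of $(t/A(t))^{(s-1)/(n+1-s)}$ near the origin, which is \eqref{intzero}. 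The positivity and finiteness of this exponent forces $s\in(1,n+1)\setminus\N$, giving \eqref{E:s-plus-1}.

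\textbf{Main obstacle.} The chief technical difficulty is establishing the seminorm bound $|u_f|_{s,A,\rn}\lesssim\|f\|_{L^A(0,\infty)}$ for $s>1$: the argument of \cite{ACPS_emb} for $s\in(0,1)$ uses the single-integral definition \eqref{intro1} directly and does not transfer to the higher-order regime. A subsidiary delicate point is tracking the correct index shift when passing from the Luxemburg bound to the integral condition, where the appearance of $(n+1-s)$ rather than $(n-s)$ in the exponent reflects the \emph{dimension-shifting} effect of going from $u$ to $\nabla u$ in order to compare the Campanato oscillation against a first-order gradient norm.
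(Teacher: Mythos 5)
Your proposal has two genuine gaps, both serious.

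\textbf{No actual argument for $s<n+1$.} You claim that \eqref{E:s-plus-1} follows because the exponent $\frac{s-1}{n+1-s}$ "must be positive and finite for the integral to be meaningful." This is not a proof. If $s>n+1$ the exponent is negative, and then $\int_0 (t/A(t))^{\text{negative}}\,dt = \int_0 (A(t)/t)^{\text{positive}}\,dt$ converges near zero for every Young function $A$ (since $A(t)/t$ is non-decreasing and bounded on $[0,1]$). So the integral condition by itself imposes no upper bound on $s$; the restriction $s<n+1$ is an independent assertion that requires its own argument. The paper obtains it by a scaling construction: for $\xi\in C_0^\infty(\rn)$ with $\nabla\xi(0)\ne 0$, the functions $u_j(x)=j^{s-n}\xi(x/j)$ have $|\nabla^{[s]}u_j|_{\{s\},A,\rn}$ bounded uniformly in $j$ (a consequence of the dilation structure of the Gagliardo-type seminorm and the fact that $A(t)\lesssim t$ near zero), yet $|u_j|_{\mathcal L^\varphi(\rn)}\gtrsim j^{s-n-1}$ (by Taylor expansion at the origin). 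Letting $j\to\infty$ forces $s\le n+1$, and $s=n+1\notin\N$ is not possible, hence $s<n+1$. Nothing in your outline produces this.

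\textbf{Wrong test functions, so \eqref{intzero} is not reached.} You propose radial Riesz-potential functions with monotone rearrangement $u_f^*(r)=\int_r^\infty f(\varrho)\varrho^{-1+s/n}\,d\varrho$, take $f$ supported in $(0,|B|/2)$, and arrive at a bound of the form $\|\rho^{s/n}\chi_{(0,|B|/2)}\|_{L^{\widetilde A}(0,\infty)}\lesssim\varphi(|B|^{1/n})|B|$. But $\rho^{s/n}\chi_{(0,|B|/2)}$ is a bounded function with bounded support, so its Luxemburg norm is finite for \emph{every} Young function $A$; unwinding via \eqref{E:norm-characteristic} and \eqref{AAtilde} it simply reproduces the pointwise inequality $\varphi_{s,A}(r)\lesssim\varphi(r)$ and yields no constraint on $A$ at all. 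The necessity of \eqref{intzero} comes from a tail-supported quantity, not a near-origin one: the paper tests against functions of the form $v_f(x)=x_1\int_{\omega_n|x|^n}^\infty f(r)\,r^{-[s]-1+\frac{s-1}{n}}(r-\omega_n|x|^n)^{[s]}\,dr$ (see \eqref{E:uf}). The odd factor $x_1$ forces $(v_f)_B=0$ for balls centered at the origin, and the resulting oscillation lower bound (equation \eqref{130}) involves the tail integral $\int_{|B|}^\infty f(r)\,r^{-1+\frac{s-1}{n}}\,dr$. Taking the supremum over admissible $f$ (after reduction via Lemma~\ref{lemma-old}) and using \eqref{E:dual-monotone} produces $\|r^{-1+\frac{s-1}{n}}\chi_{(|B|,\infty)}\|_{L^{\widetilde A}(0,\infty)}<\infty$, and the change of variables $t\sim r^{-1+\frac{s-1}{n}}$ followed by \eqref{equiv-cond} converts this into \eqref{intzero}. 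Your monotone Riesz potential cannot produce a tail-supported integral, so this route is structurally closed to you. You also flag but do not resolve the seminorm estimate $|u_f|_{s,A,\rn}\lesssim\|f\|_{L^A}$ for $s>1$; in the paper this is exactly what \eqref{128} (from \cite[Lemma~5.1]{ACPS_modulus}) and Lemma~\ref{lemma1} supply, and it requires the odd construction, not an iterated Riesz potential.
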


Given $s\in (1, n+1) \setminus \N$ and a Young function  $A$ satisfying the condition \eqref{intzero}, the optimal Campanato target space for embeddings of 
 $V_{d,1}^{s,A}(\rn)$ is built upon the function $\psi_{s,A} \colon  (0, \infty) \to [0, \infty)$ defined as follows. Let $F$ be the Young function given by
\begin{equation}\label{F}
F(t)=  t^{\frac {n}{n-(s-1)}}\, \int_0^t \frac{\widetilde{A} (\tau)}{\tau^{1+\frac {n}{n-(s-1)}}} \; d \tau \qquad \hbox{for}\;\; t\geq 0\,.
\end{equation}
Then, 
 the function 
$\psi_{s,A} : (0, \infty) \to [0, \infty)$ is defined as
\begin{equation}\label{101}
    \psi_{s,A}(r) 
    = \frac{1}{r^{n-s}
    F^{-1}\left(\frac 1{r^n}\right)} \qquad \hbox{for}\;\; r>0\,,
\end{equation}
where $F^{-1}$ denotes the right-continuous inverse of   $F$. 
Note that $F$ is well defined,  since assumption \eqref{intzero} is equivalent to the convergence of the integral on the right-hand side of equation \eqref{F} -- see \cite[Lemma 2.3]{cianchi-ibero}.
\\
 The alternative formula
\begin{equation}\label{127}
    \psi_{s,A} (r) \approx r^s \, {\widetilde{F}}^{-1}\left(\frac 1{r^n} \right)  \qquad \hbox{for}\;\; r>0
\end{equation}
holds, thanks to equation \eqref{AAtilde}.

\begin{remark}
 As shown in \cite[Proposition~2.2]{ACPS_modulus}, the function $A$ always dominates $\widetilde F$ globally. Moreover, if it is separated from the power function $t^{\frac n{s-1}}$ in the sense of the \emph{Matuszewska-Orlicz indices}, then $A$ and $\widetilde F$ are equivalent. Specifically, if
\begin{equation}\label{E:index-infty}
        i_{\infty}(A)>\frac{n}{s-1},
    \end{equation}
    then $\widetilde{F}\approx A$ near infinity and
    \begin{equation}\label{E:varrho-infty}
        \psi_{s,A}(r)\approx r^sA^{-1}\left(r^{-n}\right)
        \qquad\text{for $r\in(0,1)$.}
    \end{equation}
  Moreover, if   \begin{equation}\label{E:index-zero}
        i_{0}(A)>\frac{n}{s-1},
    \end{equation}
    then $\widetilde{F}\approx A$ near zero and
    \begin{equation}\label{E:varrho-zero}
        \psi_{s,A}(r)\approx r^sA^{-1}\left(r^{-n}\right)
        \qquad\text{for $r\in(1,\infty)$}.
    \end{equation}
\end{remark}

We are now in a position to state the optimal embedding theorem for $V_{d,1}^{s,A}(\rn)$ into Campanato spaces. A criterion for embeddings into vanishing spaces is also given.  
Analogously to~\eqref{E:vanishing}, we  say that $$V^{s,A}_{d,1}(\rn) \to V\!\mathcal{L}^{\varphi}(\rn) \quad \text {uniformly}$$
if $V^{s,A}_{d,1}(\rn) \to \mathcal L^\varphi (\rn)$ and 
\begin{equation}\label{E:vanishing-higher}
    \lim_{r\to0^+}
\sup_{ u\in V^{s,A}_{d,1}(\rn),|\nabla^{[s]}u|_{\{s\}, A, \rn} \leq 1 } 
    \left( 
    \sup_{B\subset \rn, |B| \leq r} \; \frac 1{\varphi (|B|^{\frac{1}{n}})}\medint _B |u - u_B| \; dx
    \right)=0.
\end{equation}

\begin{theorem}[\bf Optimal embeddings into Campanato spaces for $s>1$ and $k=0$]\label{teo2}
Assume that $s\in (1, n+1)  \setminus \N$. Let $A$ be a Young function fulfilling  \eqref{intzero} and let $\psi_{s,A}$ be the function defined by \eqref{101}.
\\ (i) The embedding
\begin{equation}\label{100}
 {V^{s, A}_{d,1}(\rn)} \to \mathcal{L}^{\psi_{s,A}} (\rn)
\end{equation}
holds, and there exists a positive constant $c=c(n,s)$ such that
\begin{equation}\label{102}
|u|_{\mathcal{L}^{\psi_{s,A}} (\rn)} \leq c \, \big|\nabla^{[s]} u \big|_{\{s\}, A, \rn}
\end{equation}
for every $u\in {V^{s, A}_{d,1}(\rn)}$.
 Moreover, $\mathcal{L}^{\psi_{s,A}} (\rn)$ is the optimal  Campanato target space in \eqref{100} and \eqref{102}.
\\ (ii)
Let $\varphi$ be an admissible function. Then,
\begin{equation}\label{100v}
V^{s, A}_{d,1}(\rn) \to V\!\mathcal{L}^{\varphi}(\rn)\qquad \text{uniformly}
\end{equation}
if and only if
\begin{equation}\label{limit1'}
\psi_{s,A}(r)\leq c \, \varphi (r) \qquad \text{for $r>0$,}
\end{equation}
for some positive constant $c$
and
\begin{equation}\label{limit'}
\lim_{r \to 0^+} \frac{\psi_{s,A}(r)}{\varphi (r)} =0.
\end{equation}
\end{theorem}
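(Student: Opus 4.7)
The strategy is to compose Theorem~\ref{thm_Z} (applied to $\nabla u$ with smoothness parameter $s-1$) with Theorem~\ref{CP}. Given $u\in V^{s,A}_{d,1}(\rn)$, one checks $\nabla u\in V^{s-1,A}_{d,0}(\rn)$: since $[s-1]=[s]-1$ and $\{s-1\}=\{s\}$, one has $\nabla^{[s-1]}(\nabla u)=\nabla^{[s]}u\in V^{\{s\},A}(\rn)$, and the decay of the derivatives of $\nabla u$ of orders $0,\dots,[s]-1$ coincides with the decay of the derivatives of $u$ of orders $1,\dots,[s]$. Theorem~\ref{thm_Z} then yields $\|\nabla u\|_{X_{s-1}(\rn)}\lesssim|\nabla^{[s]}u|_{\{s\},A,\rn}$, while Theorem~\ref{CP} gives $|u|_{\mathcal{L}^{\varphi_{X_{s-1}}}(\rn)}\lesssim\|\nabla u\|_{X_{s-1}(\rn)}$. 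What remains is to identify $\varphi_{X_{s-1}}$ with $\psi_{s,A}$ up to equivalence.

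\textbf{Identification $\varphi_{X_{s-1}}\approx\psi_{s,A}$.} Since $X_{s-1}'=Z_{s-1}$, this reduces to computing $r^{-n}\|\tau^{(s-1)/n}g^{**}(\tau)\|_{L^{\widetilde A}(0,\infty)}$ with $g(\varrho)=\varrho^{1/n}\chi_{(0,r^n)}(\varrho)$. A direct rearrangement calculation yields $g^{**}(\tau)\approx r$ on $(0,r^n)$ and $g^{**}(\tau)\approx r^{n+1}/\tau$ on $(r^n,\infty)$, so that $\tau^{(s-1)/n}g^{**}(\tau)$ peaks at $\tau=r^n$ with value $\approx r^s$. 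Splitting the Luxemburg integral at $r^n$ and applying the substitution $\tau=r^n/y$ on the tail, one finds that the tail contribution equals, up to constants, $r^n F(r^s/\lambda)$, with $F$ as in~\eqref{F}; the inner contribution from $(0,r^n)$ is dominated by a convexity argument. Solving $r^nF(r^s/\lambda)\approx 1$ gives $\lambda\approx r^s/F^{-1}(1/r^n)$, whence $\varphi_{X_{s-1}}(r)\approx\psi_{s,A}(r)$ in view of~\eqref{101}.

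\textbf{Optimality (Part (i)) and Part (ii).} For optimality, fix a ball $B=B_r(0)\subset\rn$ and, for each non-increasing $f:(0,\infty)\to[0,\infty)$ with $\operatorname{supp} f\subset(0,|B|)$, build the radial test function $w_f(x)=\int_{|x|}^\infty U_f(\rho)\,d\rho$, where $U_f(\rho)=\int_{\omega_n\rho^n}^\infty f(\varrho)\varrho^{-1+(s-1)/n}\,d\varrho$. Then $|\nabla w_f(x)|=U_f(|x|)$ reproduces the extremal radial profile~\eqref{uf} at smoothness level $s-1$, and the sharp bound $|\nabla^{[s]}w_f|_{\{s\},A,\rn}\lesssim\|f\|_{L^A(0,\infty)}$ follows from the higher-order analogue of~\cite[Inequality~(6.15)]{ACPS_emb} underlying Theorem~\ref{thm_Z}, combined with the standard pointwise control of higher radial derivatives by the radial profile. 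Computing the median-based oscillation lower bound as in~\eqref{15} and choosing $f$ suitably adapted to the growth of $F$ then yields $\int_B|w_f-(w_f)_B|\,dx\gtrsim|B|\,\psi_{s,A}(r)\,\|f\|_{L^A(0,\infty)}$, which, plugged into the hypothesized embedding $V^{s,A}_{d,1}\to\mathcal{L}^\varphi$, forces $\psi_{s,A}\lesssim\varphi$. Part~(ii) then proceeds in parallel with the proof of Theorem~\ref{teo1}(ii): the if direction from Part~(i) via a factorization of $\medint_B|u-u_B|\,dx/\varphi(|B|^{1/n})$ through $\psi_{s,A}/\varphi$ combined with~\eqref{limit'}, and the only if direction from the same test functions $w_f$ combined with~\eqref{E:vanishing-higher}.

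\textbf{Main obstacle.} The principal difficulty is the identification $\varphi_{X_{s-1}}\approx\psi_{s,A}$, particularly verifying that the contribution from $(0,r^n)$ is dominated by the tail and tracking constants through the substitution. A secondary difficulty is the choice of $f$ in the optimality step: because $F$ need not be equivalent to $\widetilde A$ when $s>1$, the simplest $f=\chi_{(0,|B|)}$ used in the proof of Theorem~\ref{teo1} matches $\psi_{s,A}$ only up to the possibly unbounded ratio $F^{-1}/\widetilde A^{-1}$, and $f$ must be adapted to $F$ so that $\|f\|_{L^A(0,\infty)}$ and the oscillation integral combine to reproduce $\psi_{s,A}$.
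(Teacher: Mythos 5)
Your Part (i) follows exactly the paper's route: decompose through $\nabla u\in V^{s-1,A}_{d,0}(\rn)$, invoke Theorem~\ref{thm_Z} at level $s-1$, compose with Theorem~A, and then identify $\varphi_{X_{s-1}}$ with $\psi_{s,A}$ by computing the Luxemburg norm of $\varrho^{(s-1)/n}g^{**}(\varrho)$ with $g=(\cdot)^{1/n}\chi_{(0,r^n)}$. Your direct computation of $g^{**}$ and the change of variables leading to the tail contribution $\approx r^nF(r^s/\lambda)$ is essentially what the paper packages into Lemma~\ref{L:lemma-for-ri-norms} combined with \eqref{E:may-9-five}, and the domination of the inner piece $(0,r^n)$ does follow from $\widetilde A\lesssim F$ (a consequence of the monotonicity of $\widetilde A(\tau)/\tau$). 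So this part is sound and equivalent to the paper's.

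The optimality argument, however, has a genuine gap: the radial test function $w_f(x)=\int_{|x|}^\infty U_f(\rho)\,d\rho$ does \emph{not} in general belong to $V^{s,A}_{d,1}(\rn)$, and the claimed bound $|\nabla^{[s]}w_f|_{\{s\},A,\rn}\lesssim\|f\|_{L^A(0,\infty)}$ fails. For a radial profile with $U_f(0)>0$ (which happens for every $f$ of the type you consider, e.g.\ $f=\chi_{(0,|B|)}$), one has $\nabla w_f(x)=-U_f(|x|)\tfrac{x}{|x|}$, which has a direction-dependent discontinuity at the origin; the vector field $\tfrac{x}{|x|}$ lies exactly at the critical regularity threshold, and the Gagliardo-type integral $\iint A\big(|\nabla w_f(x)-\nabla w_f(y)|/|x-y|^{\{s\}}\big)\tfrac{dx\,dy}{|x-y|^n}$ diverges near $(0,0)$ for borderline $A$ (e.g.\ $A(t)\approx t^{n/(s-1)}$), which is precisely the regime relevant to optimality. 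There is no ``higher-order analogue of \cite[Inequality~(6.15)]{ACPS_emb}'' that rescues this: that inequality is for scalar radial profiles at order $s\in(0,1)$ and does not lift to the radial ansatz at integer derivative level precisely because of this origin singularity. The paper avoids it by taking
\begin{equation*}
v_f(x)=x_1\int_{\omega_n|x|^n}^\infty f(r)\,r^{-[s]-1+\frac{s-1}{n}}(r-\omega_n|x|^n)^{[s]}\,dr,
\end{equation*}
whose polynomial factor $x_1$ (more generally a degree-$(k+1)$ harmonic polynomial in the $k\geq 1$ case) and the zero of order $[s]$ at the lower endpoint of integration render $v_f$ regular enough that \cite[Lemma~5.1]{ACPS_modulus} gives the required seminorm bound; the oddness in $x_1$ moreover yields $(v_f)_B=0$ for $B$ centered at the origin, so no median argument is needed. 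The paper then obtains the oscillation lower bound in \eqref{130}, eliminates the support restriction on $f$ via Lemma~\ref{lemma-old}, and converts the resulting supremum to an $L^{\widetilde A}$ norm by the duality relation \eqref{E:dual-monotone}; this last step also dispenses with the need to hand-pick $f$ adapted to $F$, which you flag as a secondary difficulty but which is in fact resolved automatically by the duality. Your sketch of Part~(ii) mirroring Theorem~\ref{teo1}(ii) is fine in outline, but it inherits the same defective test functions, so it needs the same replacement by $v_f$.
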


Considerations parallel to those following Theorem \ref{teo1} on the non-effectiveness of the use of embeddings into Campanato type spaces to deduce embeddings into spaces of uniformly continuous functions also apply in the case of higher-order fractional Orlicz-Sobolev spaces. 
\\
Indeed, \cite[Corollaries 1 and 2]{Spanne} imply that the space $\mathcal{L}^{\psi_{s, A}} (\rn)$ is embedded into some space of uniformly continuous functions  
if and only if
\begin{equation}\label{E:spanne-higher-2}
    \int_0 \frac{\psi_{s,A}(r)}{r}\; dr < \infty.
\end{equation}
  The  relevant embedding reads:
\begin{equation}\label{E:spanne-higher-1}
 \mathcal{L}^{\psi_{s, A}} (\rn) \to C^{\varphi(\cdot)}(\rn),
\end{equation}
where
\begin{equation}\label{E:spanne-higher-3}
    \varphi (r)\approx\int_0^{r} \frac{\psi_{s,A}(\varrho)}{\varrho}\; d\varrho\qquad\text{for $r>0$.}
\end{equation}
 The condition \eqref{E:spanne-higher-2} amounts to
$$\int_0r^{s-1}\widetilde{F}^{-1}(r^{-n})\, dr <\infty.$$
A change of variables shows that the latter is equivalent to 
\begin{align}
    \label{2024-10}
    \int^\infty \frac{\widetilde{F}^{-1}(t)}{t^{1+ \frac sn}}\, dt <\infty.
\end{align}
Altogether, combining \eqref{100} with~\eqref{E:spanne-higher-1}, one can conclude  that
\begin{equation}
\label{E:spanne-higher-1bis}
    V^{s,A}_{d,1} (\rn)
    \to 
    C^{\varphi(\cdot)}(\rn)
\end{equation}
with $\varphi$ given by \eqref{E:spanne-higher-3}.
\\  On the other hand,
 the condition \eqref{2024-10} can be shown to imply (and to be possibly stronger than)
the condition 
\begin{equation}\label{2024-12}
    \int^{\infty} 
    \bigg (\frac t{\widetilde{F}(t)}\bigg)^{\frac s{n-s}} \; dt <\infty.
\end{equation}
Since $\widetilde{F} \lesssim A$, the condition \eqref{2024-12} in turn implies (and is again possibly stronger than) \eqref{E:weaker}. The latter  is the optimal condition for an embedding of the form \eqref{E:spanne-higher-1bis} to hold for some modulus of continuity $\varphi$ -- see \cite{ACPS_modulus}. Moreover, there exist $s$ and $A$ for which the conditions \eqref{2024-10} and \eqref{E:weaker} hold simultaneously, and the embedding \eqref{E:spanne-higher-1bis} is weaker than the optimal one established in \cite{ACPS_modulus}. This can be verified by considering again Young functions $A$ as in \eqref{2024-50}.

An application of Theorem \ref{teo2} yields the following characterization of embeddings into $\BMO(\rn)$ and $\VMO(\rn)$.
 
\begin{corollary}[\bf Embeddings into $\BMO$ for $s>1$]\label{C:bmo-higher1}
Let $s\in (1,n+1)\setminus \N$ and let $A$ be a Young function.
\\ (i) The embedding
\begin{equation}\label{2024-51}
V^{s,A}_{d,1}(\rn) \to \BMO(\rn)
\end{equation}
holds if and only if either $s<n$ and
\begin{equation}\label{2024-52}
\int_0^t \frac{\widetilde{A} (\tau)}{\tau^{1+\frac {n}{n-(s-1)}}} \; d \tau \leq c \, t^{\frac n{(n-s)(n-(s-1))}}\quad\text{for $t>0$,}
\end{equation}
for some positive constant $c$,
or $s=n$ and 
\begin{align}
    \label{2024-56}
    A(t)\geq  c\, t \qquad\text{for $t\in [0, t_0]$,}
\end{align}
for some positive constants $c$ and $t_0$.
\\ (ii) We have that
\begin{equation}\label{2024-53}
V^{s,A}_{d,1}(\rn) \to \VMO(\rn) \qquad \text{uniformly}
\end{equation}
if and only if either $s<n$, the condition \eqref{2024-52} holds, and
\begin{equation}\label{2024-54}
\lim_{t \to \infty}  
t^{-\frac n{(n-s)(n-(s-1))}}\int_0^t \frac{\widetilde{A} (\tau)}{\tau^{1+\frac {n}{n-(s-1)}}} \; d \tau
= 0,
\end{equation}
or $s=n$ and the condition \eqref{2024-56} holds.
\end{corollary}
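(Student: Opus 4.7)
The basic strategy is to apply Theorem~\ref{teo2} with the admissible function $\varphi(r)\equiv 1$, since $\BMO(\rn)=\mathcal{L}^{1}(\rn)$ and $\VMO(\rn)=V\!\mathcal{L}^{1}(\rn)$ in this notation. The proof then reduces to translating the resulting boundedness/vanishing statements for $\psi_{s,A}$ into the integral conditions on $\widetilde{A}$ and pointwise conditions on $A$ appearing in the corollary.

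For Part~(i), Theorem~\ref{teo2}(i) and the optimality of $\mathcal{L}^{\psi_{s,A}}$ among Campanato targets imply that \eqref{2024-51} holds if and only if $\psi_{s,A}$ is bounded on $(0,\infty)$. When $s<n$, this is immediate from~\eqref{101}: the bound $\psi_{s,A}(r)\lesssim 1$ is equivalent to $F^{-1}(r^{-n})\gtrsim r^{-(n-s)}$ for all $r>0$, that is, $F(\sigma)\lesssim \sigma^{n/(n-s)}$ for all $\sigma>0$. Inserting the definition~\eqref{F} of $F$ and simplifying the exponent $\frac{n}{n-s}-\frac{n}{n-(s-1)}=\frac{n}{(n-s)(n-(s-1))}$ rearranges this exactly to~\eqref{2024-52}. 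When $s=n$, I would use the equivalent formula~\eqref{127} instead, which reduces $\psi_{n,A}\lesssim 1$ to the linear lower bound $\widetilde{F}(\tau)\gtrsim \tau$ on all of $(0,\infty)$. Because $\widetilde{F}(\tau)/\tau$ is non-decreasing, this is equivalent to $F$ having a plateau at $0$, and unfolding~\eqref{F} this means $\widetilde{A}$ vanishes on some $[0,\tau_0]$; by Legendre duality this amounts to $A(t)\geq ct$ near $0$, which is~\eqref{2024-56}.

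For Part~(ii), I would apply Theorem~\ref{teo2}(ii) with $\varphi\equiv 1$. The characterization~\eqref{limit1'}--\eqref{limit'} yields that \eqref{2024-53} holds if and only if $\psi_{s,A}$ is bounded \emph{and} $\lim_{r\to 0^+}\psi_{s,A}(r)=0$. Boundedness again gives \eqref{2024-52} (or, in the $s=n$ case, \eqref{2024-56}) by the argument of Part~(i). When $s<n$, the vanishing condition translates via~\eqref{101} into
\[
\frac{F(\sigma)}{\sigma^{n/(n-s)}}\to 0\qquad\text{as $\sigma\to\infty$,}
\]
and expanding $F$ by~\eqref{F} and dividing by $\sigma^{n/(n-(s-1))}$ produces precisely the limit~\eqref{2024-54}. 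When $s=n$, the boundedness condition~\eqref{2024-56} already forces a nontrivial linear plateau of $F$ at $0$, and a short check shows that the corresponding value of $\psi_{n,A}(r)$ then tends to $0$ as $r\to 0^+$ automatically, so no further condition is needed.

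The main technical point is the algebraic unfolding of the implicit definition of $\psi_{s,A}$ through $F$ and $F^{-1}$: each direction requires substituting $t=r^{-n}$ and $\sigma=F^{-1}(t)$ and carefully tracking how the exponents $n/(n-(s-1))$ and $n/(n-s)$ combine. The subtlest piece is the boundary case $s=n$, where the equivalence between linear growth of $\widetilde{F}$ near $0$ and \eqref{2024-56} has to be obtained by composing Legendre duality for $A$--$\widetilde{A}$ with the explicit formula~\eqref{F}; this is the step that makes essential use of the Young-function structure rather than just the formalism of rearrangement-invariant norms.
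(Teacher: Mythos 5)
Your overall strategy — take $\varphi\equiv 1$ in Theorem~\ref{teo2} and translate boundedness/vanishing of $\psi_{s,A}$ into conditions on $F$ and then on $\widetilde A$ — is exactly what the paper intends (it states the corollary as ``an application of Theorem~\ref{teo2}'' with no separate proof), and your algebra in the case $s<n$ is correct, including the exponent bookkeeping $\frac{n}{n-s}-\frac{n}{n-(s-1)}=\frac{n}{(n-s)(n-(s-1))}$. However, there are a couple of genuine gaps.

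First, the ``only if'' direction is not complete as written. The hypothesis allows $s\in(n,n+1)$, and the corollary's disjunction (``$s<n$ and \eqref{2024-52}'' or ``$s=n$ and \eqref{2024-56}'') implicitly asserts that the embedding \emph{never} holds for such $s$. You do need to say why: from~\eqref{101} one has $\psi_{s,A}(r)=r^{s-n}/F^{-1}(r^{-n})$, and since $F^{-1}$ is nondecreasing and finite on $(0,\infty)$, $\psi_{s,A}(r)\to\infty$ as $r\to\infty$ whenever $s>n$, so $\psi_{s,A}$ is never bounded and Theorem~\ref{teo2}'s optimality rules out the embedding. Relatedly, before applying Theorem~\ref{teo2} at all, one must secure hypothesis~\eqref{intzero}: in the ``only if'' direction this comes from Theorem~\ref{teo3}, and in the ``if'' direction one should note that \eqref{2024-52} (because its right-hand side vanishes as $t\to 0^+$ for $s<n$) and \eqref{2024-56} each imply~\eqref{intzero}. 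These are standard but should be stated.

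Second, your claim that for $s=n$ the vanishing $\psi_{n,A}(r)\to 0$ as $r\to 0^+$ follows \emph{automatically} from boundedness is incorrect. One has $\psi_{n,A}(r)=1/F^{-1}(r^{-n})$, and as $r\to 0^+$ this tends to $1/F^{-1}(\infty)$, which is $0$ precisely when $F$ is finite-valued, equivalently when $\widetilde A$ is finite-valued, equivalently when $A(t)/t\to\infty$ as $t\to\infty$. Taking $A(t)=t$ shows the failure concretely: then $\widetilde A\equiv 0$ on $[0,1]$ and $\equiv\infty$ on $(1,\infty)$, so by~\eqref{F} $F\equiv 0$ on $[0,1]$ and $\equiv\infty$ on $(1,\infty)$, $F^{-1}\equiv 1$, hence $\psi_{n,A}\equiv 1$ — bounded (so \eqref{2024-56} does give $\BMO$), but $\psi_{n,A}$ does not tend to $0$. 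That said, this error turns out to be harmless for the corollary itself, because the standing hypothesis $s\in(1,n+1)\setminus\N$ already excludes $s=n$ (as $n\in\N$), so the ``$s=n$'' clause of the statement is in fact vacuous. You should still not assert the vanishing as automatic, since the implication you claim is false.

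<br>
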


\begin{proof}
[Proof of Theorem \ref{teo3}]  
Assume 
that embedding \eqref{july1} holds for some $s\in (1, \infty) \setminus \N$.
Let $\xi \in C^\infty_0(\rn)$  be  a nonnegative function such that $\nabla \xi (0)\neq 0$. For each $j \in \N$,   consider the function $u_j \colon  \rn \to \mathbb  R$ defined as
\begin{equation}\label{100a}
u_j (x)= j^{s-n} \xi \Big(\frac x j\Big) \qquad \hbox{for}\;\; x\in \rn\,.
\end{equation}
Since $u_j \in C^\infty_0(\rn)$, we have that
$$|\{ |\nabla ^k u_j|> t\}| < \infty \qquad \text{ for  $t>0$,}$$
 for  $k=0, 1, \dots, [s]$.
\\
We claim that there exists a constant $c$, independent of $j$, such that
\begin{equation}\label{may30}
|\nabla ^{[s]}u_j|_{\{s\}, A, \rn} \leq c.
\end{equation}
To verify this claim, observe that
\begin{equation}\label{101a}
 \nabla ^{[ s] }u_j (x) = j^{s- [ s] -n}  \nabla ^{[ s] } \xi  \Big(\frac x j\Big) = j^{\{s\} -n}   \nabla ^{[ s] } \xi  \Big(\frac x j\Big) \qquad \text{for $x \in \rn$.}
\end{equation}
Therefore,
\begin{equation}\label{102a}
 \frac{\Big|\nabla ^{[ s] }u_j(x) - \nabla ^{[ s] }u_j(y)\Big|}{|x-y|^{\{s\}}} =  \frac{\Big|\nabla ^{[ s] }\xi\big(\frac x j\big) - \nabla ^{[ s] }\xi\big(\frac y j\big)\Big|}{\big|\frac{x-y}j \big|^{\{s\}}}\, j^{-n} \qquad \text{for $x,y \in \rn$, with $x \neq y$.}
\end{equation}
Since $\xi$ is smooth and has  bounded support, and $j \geq 1$, the right-hand side of equation \eqref{102a} is pointwise bounded by a constant $t_0$ independent of $j$.
\\
Next, since   $A$ is a Young function,  there  exists  a positive constant $c$ 
such that $A(t)\leq c\, t$ for $t\in [0, t_0]$.  Hence,
\begin{align}\label{103}
|\nabla ^{[ s] }u_j|_{\{s\}, A, \rn} &\leq c \, \int_{\rn} \int_{\rn} \frac{\big|\nabla ^{[ s] }u_j(x) - \nabla ^{[ s] }u_j(y)\big|}{|x-y|^{\{s\}}}\; \frac{dx \, dy}{|x-y|^n}
\\
& = c\, j^{-2n} \, \int_{\rn} \int_{\rn} \frac{\Big|\nabla ^{[ s] }\xi\big(\frac x j\big) - \nabla ^{[ s] }\xi\big(\frac y j\big)\Big|}{\big|\frac{x-y}j \big|^{\{s\}}}\; \frac{dx \, dy}{|\frac {x-y}j|^n}\nonumber
\\
& = c\,  \, \int_{\rn} \int_{\rn} \frac{\big|\nabla ^{[ s] }\xi(x) - \nabla ^{[ s]}\xi(y)\big|}{|x-y|^{\{s\}}}\; \frac{dx \, dy}{|x-y|^n}\,. \nonumber
\end{align}
The assumption that $\xi \in C^\infty_0(\rn)$  ensures that the integral on
the rightmost-hand side  of equation \eqref{103} is convergent. Hence, the inequality \eqref{may30} follows.
\\ The embedding \eqref{july1} and the inequality \eqref{may30} imply that there exists a constant $c$ such that
\begin{equation}\label{july2}
|u_j|_{\mathcal L^{\varphi}(\rn)} \leq c
\end{equation}
for  $j \in \N$. For $r>0$, we will denote by $B_r(0)$ the ball centered at $0$ and having radius $r$. From the definition of $u_j$ and a change of  variables, we obtain that
\begin{align}\label{july3}
    |u_j|_{\mathcal L^{\varphi}(\rn)} 
    \geq    
    \frac 1
    {\varphi (\omega_n^{\frac{1}{n}})} 
    \medint_{B_1(0)}|u_j-(u_j)_{B_1(0)}|\, dx
    & =
    \frac {j^{s-n}}
    {\varphi (\omega_n^{\frac{1}{n}})} \medint_{B_{\frac 1j}(0)}|\xi-\xi_{B_{\frac 1j}(0)}|\, dx
\end{align}
for  $j \in \N$. 
Observe that
\begin{equation}\label{july4}
\liminf_{j \to \infty}\,  j \medint_{B_{\frac 1j}(0)}|\xi-\xi_{B_{\frac 1j}(0)}|\, dx >0.
\end{equation}
Indeed, inasmuch as
\begin{equation}
    \label{dec1}
    \xi(x) = \xi(0) + x \cdot \nabla \xi (0) + \mathcal O (x^2) \qquad \text{as $x\to 0$,}
\end{equation}
one has that
\begin{equation}
    \label{dec2}
\xi_{B_{\frac 1j}(0)} = \xi (0) + \medint_{B_{\frac 1j}(0)}x \cdot \nabla \xi (0) dx + \mathcal O (j^{-2})
= \xi (0) + \mathcal O (j^{-2})
\qquad \text{as $j\to \infty$.}
    \end{equation}
Thus, since we are assuming that $\nabla \xi (0)\neq 0$, there exists a positive constant $c$ such that
    \begin{align}
    \label{dec3}
\liminf_{j \to \infty}\,  j \medint_{B_{\frac 1j}(0)}|\xi-\xi_{B_{\frac 1j}(0)}|\, dx
& = \liminf_{j \to \infty}\,  j \medint_{B_{\frac 1j}(0)}|x \cdot \nabla \xi (0) + \mathcal O (x^2)  + \mathcal O (j^{-2})|\, dx
\\ \nonumber  & \geq \liminf_{j \to \infty}\,  \left(j \medint_{B_{\frac 1j}(0)}|x \cdot \nabla \xi (0)|\, dx - \mathcal O (j^{-1})\right)
\\ \nonumber  &  \geq \lim_{j \to \infty} \left(c - \mathcal O (j^{-1})\right)
=c,
\end{align}
whence equation \eqref{july4} follows.
From equations \eqref{july3} and \eqref{july4}, we deduce that there exists a positive constant $c$ such that
\begin{equation}\label{july5}
|u_j|_{\mathcal L^{\varphi}(\rn)} \geq   c\, j^{s-n-1}
\end{equation}
for sufficiently large $j$. Since the latter inequality can only hold if $s<n+1$, the assertion \eqref{E:s-plus-1} follows.
\\ 
The fact that an  embedding of the form \eqref{july1}  can only hold under the assumption  \eqref{intzero} is a byproduct  of the
proof of Theorem \ref{teo2} to be presented below. More precisely, the argument that establishes the inequality \eqref{E:pump} tells us that if \eqref{july1} holds for some admissible function $\varphi$, then
\begin{equation}\label{2024-58}
\frac{|B|^{\frac{1}{n}}}
{\varphi(|B|^{\frac{1}{n}})}
        \left\|
        r^{-1+\frac{s-1}{n}}
        \chi_{(|B|,\infty)}(r)
        \right\|_{L^{\widetilde A}(0,\infty)}\le c
\end{equation}
 for every ball $B\subset\rn$ and some constant 
$c$ depending only on $s$, $A$, and $n$.
A standard computation shows that the finiteness of the Luxemburg norm in \eqref{2024-58} for some $B$ is equivalent to \eqref{intzero}.
More precisely, the finiteness of the Luxemburg norm 
in \eqref{2024-58} for some $B$ is equivalent to the existence of some $\lambda>0$ such that
\begin{equation*}
    \int_{|B|}^{\infty}
    \widetilde A\left(\frac{r^{-1+\frac{s-1}{n}}}{\lambda}\right)\,dr<\infty.
\end{equation*}
A change of variables shows that this condition is in turn equivalent to
\begin{equation*}
\int_{0}\frac{\widetilde A(y)}{y^{\frac{n}{n-s+1}+1}}\,dy < \infty.
\end{equation*}
By \eqref{equiv-cond}, the latter condition is  equivalent to \eqref{intzero}. 
\end{proof}

The following technical lemmas are needed in the proof of  Theorem \ref{teo2}, as well as in that of Theorem \ref{teo_k} in the next section.

\begin{lemma}\label{L:lemma-for-ri-norms}
    Let $X(0,\infty)$ be a~rearrangement-invariant space. Assume that $\alpha\in(0,\infty)$ and $\beta\in(-1,\infty)$ are such that $\alpha+\beta\ge0$. Then,
    \begin{equation}\label{E:lemma-for-ri-norms}
        \left\|\varrho^{\alpha}\left((\cdot)^{\beta}\chi_{(0,r)}(\cdot)\right)^{**}(\varrho)
        \right\|_{X(0,\infty)}
        \approx
        r^{\beta+1}\left\|\varrho^{\alpha-1}\chi_{(r,\infty)}(\varrho)\right\|_{X(0,\infty)} \qquad \text{for $r>0$}
    \end{equation}
    with equivalence constants depending only on $\beta$.
\end{lemma}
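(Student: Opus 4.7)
The plan is to reduce the claim to a pointwise estimate for the function $g(\varrho)=\varrho^{\alpha}f^{**}(\varrho)$, where $f(\varrho)=\varrho^{\beta}\chi_{(0,r)}(\varrho)$, and then invoke the rearrangement invariance of $X$. First, I would compute $f^{**}$ explicitly. For $\beta\in(-1,0)$, $f$ is itself non-increasing, so $f^{*}=f$; for $\beta\ge 0$, a standard distribution-function argument gives $f^{*}(s)=(r-s)^{\beta}\chi_{(0,r)}(s)$. Integrating $f^{*}$, one finds in either case
\begin{equation*}
    \varrho\, f^{**}(\varrho) = \frac{r^{\beta+1}}{\beta+1}
    \qquad\text{for $\varrho\ge r$.}
\end{equation*}
In particular $g(\varrho)\chi_{(r,\infty)}(\varrho)=\tfrac{r^{\beta+1}}{\beta+1}\varrho^{\alpha-1}\chi_{(r,\infty)}(\varrho)$, which together with the Banach lattice property of $X$ yields the lower estimate
\begin{equation*}
    \|g\|_{X(0,\infty)}\ge \tfrac{r^{\beta+1}}{\beta+1}\|\varrho^{\alpha-1}\chi_{(r,\infty)}(\varrho)\|_{X(0,\infty)}.
\end{equation*}

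The core of the argument is the pointwise bound
\begin{equation*}
    g(\varrho) \le C(\beta)\, r^{\beta+1}(\varrho+r)^{\alpha-1}
    \qquad\text{for all $\varrho>0$,}
\end{equation*}
with a constant $C(\beta)$ independent of $\alpha$. On $(r,\infty)$ this is immediate from the explicit formula and the trivial inequalities $\varrho\le\varrho+r\le 2\varrho$. On $(0,r)$ one uses the explicit formulas for $f^{**}$ together with the hypothesis $\alpha+\beta\ge 0$: when $\beta\ge 0$ the elementary estimate $r^{\beta+1}-(r-\varrho)^{\beta+1}\le(\beta+1)r^{\beta}\varrho$ gives $f^{**}(\varrho)\le r^{\beta}$ and hence $g(\varrho)\le r^{\alpha+\beta}$; when $\beta<0$ the explicit formula $f^{**}(\varrho)=\varrho^{\beta}/(\beta+1)$ and the assumption $\alpha+\beta\ge 0$ give $g(\varrho)\le r^{\alpha+\beta}/(\beta+1)$. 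In both situations the right-hand side $r^{\beta+1}(\varrho+r)^{\alpha-1}$ is bounded below by a multiple of $r^{\alpha+\beta}$ via $(\varrho+r)^{\alpha-1}\ge r^{\alpha-1}$ when $\alpha\ge 1$, and $(\varrho+r)^{\alpha-1}\ge(2r)^{\alpha-1}$ when $\alpha\le 1$. Since $\alpha>0$, the factor $2^{|\alpha-1|}$ that appears is bounded by $2$ when $\alpha\le 1$; when $\alpha>1$, both sides of~\eqref{E:lemma-for-ri-norms} are in fact infinite (the decreasing rearrangement of $\varrho^{\alpha-1}\chi_{(r,\infty)}$ is identically $\infty$), making the estimate trivial in that regime.

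To convert this pointwise bound into the desired upper $X$-norm estimate, I would show that $(\varrho+r)^{\alpha-1}$ and $\varrho^{\alpha-1}\chi_{(r,\infty)}(\varrho)$ share the same decreasing rearrangement on $(0,\infty)$, namely $(s+r)^{\alpha-1}$ when $\alpha\le 1$ (a short distribution-function calculation) and the identically-$\infty$ function when $\alpha>1$. Rearrangement invariance of $\|\cdot\|_{X(0,\infty)}$ then identifies the two norms, and combining this with the pointwise bound yields
\begin{equation*}
    \|g\|_{X(0,\infty)}\le C(\beta)\, r^{\beta+1}\|\varrho^{\alpha-1}\chi_{(r,\infty)}(\varrho)\|_{X(0,\infty)},
\end{equation*}
as desired. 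The main obstacle is locating the right comparison function $r^{\beta+1}(\varrho+r)^{\alpha-1}$, which must simultaneously dominate $g$ pointwise with an $\alpha$-independent constant and be equidistributed with $\varrho^{\alpha-1}\chi_{(r,\infty)}(\varrho)$; once this is found, the role of the hypothesis $\alpha+\beta\ge 0$ becomes transparent, being needed only to control $\varrho^{\alpha+\beta}$ near the origin when $\beta<0$.
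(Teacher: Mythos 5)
Your proof is correct and rests on the same explicit computations as the paper's: the formulas for $f^{*}$ and $f^{**}$, the convexity estimate $r^{\beta+1}-(r-\varrho)^{\beta+1}\le(\beta+1)r^{\beta}\varrho$, and the hypothesis $\alpha+\beta\ge 0$ used to control $\varrho^{\alpha}f^{**}(\varrho)$ on $(0,r)$; the lower bound comes in both cases from the tail $\varrho\ge r$ and the lattice property. The difference is in how the upper bound is packaged. The paper splits $f^{**}$ into the two pieces supported on $(0,r)$ and $(r,\infty)$, applies the triangle inequality, and handles the first piece by a shift trick: since $\varrho^{\alpha}$ (or $\varrho^{\alpha+\beta}$) is non-decreasing, its $X$-norm over $(0,r)$ is dominated by the norm of the equimeasurable function supported on $(r,2r)$, which can then be absorbed into the tail term. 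You instead construct a single comparison function, $r^{\beta+1}(\varrho+r)^{\alpha-1}$, that dominates $\varrho^{\alpha}f^{**}(\varrho)$ pointwise on all of $(0,\infty)$ with a $\beta$-dependent constant, and observe that this majorant is equidistributed with $\varrho^{\alpha-1}\chi_{(r,\infty)}(\varrho)$, so rearrangement invariance converts the pointwise bound directly into the claimed norm estimate. This avoids the decomposition and the triangle inequality at the cost of the small extra observation about equimeasurability; the two arguments are of comparable length and difficulty, and each makes the role of the hypothesis $\alpha+\beta\ge0$ equally transparent. Your remark that the case $\alpha>1$ is vacuous (both sides infinite) is correct and worth keeping, since it explains why the constants can be taken independent of $\alpha$.
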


\begin{proof} 
    Fix $r\in(0,\infty)$. Suppose first that $\beta\ge 0$. Then,
    \begin{equation}\label{E:lemma-for-ri-norms-4}
        \left((\,\cdot\,)^{\beta}\chi_{(0,r)}(\,\cdot\,)\right)^{*}(\varrho)
        =(r-\varrho)^{\beta}\chi_{(0,r)}(\varrho)
    \end{equation}
   and
    \begin{align}
    \left((\,\cdot\,)^{\beta}\chi_{(0,r)}(\,\cdot\,)\right)^{**}(\varrho)
        &=
        \frac{1}{\beta+1}\varrho^{-1}\left[r^{\beta+1}-(r-\varrho)^{\beta+1}\right]\chi_{(0,r)}(\varrho)
        \label{E:lemma-for-ri-norms-1}
            \\
     & +
        \frac{1}{\beta+1}r^{\beta+1}\varrho^{-1}\chi_{(r,\infty)}(\varrho)
        \qquad\text{for $\varrho >0$.}
        \nonumber
    \end{align}
    Owing to the convexity of the function $\varrho\mapsto \varrho^{\beta+1}$ on the interval $(0,r)$, one has
    \begin{equation}\label{E:lemma-for-ri-norms-5}
        r^{\beta+1}-(r-\varrho)^{\beta+1}
        \le (\beta+1)\,r^{\beta}\varrho
        \qquad\text{for $\varrho\in(0,r)$.}
    \end{equation}
    Combining~\eqref{E:lemma-for-ri-norms-1} with~\eqref{E:lemma-for-ri-norms-5}, we get
    \begin{align}
    \left((\,\cdot\,)^{\beta}\chi_{(0,r)}(\,\cdot\,)\right)^{**}(\varrho)
        &\le
        r^{\beta}\chi_{(0,r)}(\varrho)
        \label{E:lemma-for-ri-norms-1-bis}+
        \frac{1}{\beta+1}r^{\beta+1}\varrho^{-1}\chi_{(r,\infty)}(\varrho)
        \qquad\text{for $\varrho >0$.}
    \end{align}
    Thus,
    \begin{align}
    \label{E:lemma-for-ri-norms-6}
            \left\|\varrho^{\alpha}\left((\,\cdot\,)^{\beta}\chi_{(0,r)}(\,\cdot\,)\right)^{**}(\varrho)
            \right\|_{X(0,\infty)}
            &\le
            r^{\beta}\left\|\varrho^{\alpha}\chi_{(0,r)}(\varrho)\right\|_{X(0,\infty)}
                +
            \frac{1}{\beta+1}r^{\beta+1}\left\|\varrho^{\alpha-1}\chi_{(r,\infty)}(\varrho)\right\|_{X(0,\infty)}.
    \end{align}
    Since the function $\varrho\mapsto \varrho^{\alpha}$ is increasing on $(0,\infty)$ and $X(0,\infty)$ is a rearrangement-invariant space, we have
    \begin{align}
        \label{E:lemma-for-ri-norms-3}
        \left\|\varrho^{\alpha}\chi_{(0,r)}(\varrho)\right\|_{X(0,\infty)}
        & \le
        \left\|\varrho^{\alpha}\chi_{(r,2r)}(\varrho)\right\|_{X(0,\infty)}
            \\  
         & \le  2r
        \left\|\varrho^{\alpha-1}\chi_{(r,2r)}(\varrho)\right\|_{X(0,\infty)}
        \le 2r
        \left\|\varrho^{\alpha-1}\chi_{(r,\infty)}(\varrho)\right\|_{X(0,\infty)}.
            \nonumber
    \end{align}    
    Coupling~\eqref{E:lemma-for-ri-norms-3} with~\eqref{E:lemma-for-ri-norms-6} yields
    \begin{equation}
    \label{E:lemma-for-ri-norms-8}
             \left\|\varrho^{\alpha}\left((\,\cdot\,)^{\beta}\chi_{(0,r)}(\,\cdot\,)\right)^{**}(\varrho)
            \right\|_{X(0,\infty)}
            \le
            \left(
            2+\frac{1}{\beta+1}
            \right)
            r^{\beta+1}\left\|\varrho^{\alpha-1}\chi_{(r,\infty)}(\varrho)\right\|_{X(0,\infty)}.
    \end{equation}
    Conversely, neglecting the first addend on the right-hand side of~\eqref{E:lemma-for-ri-norms-1} and using the lattice property of a~rearrangement-invariant space, we get 
    \begin{equation}
    \label{E:lemma-for-ri-norms-9}
             \left\|\varrho^{\alpha}\left((\,\cdot\,)^{\beta}\chi_{(0,r)}(\,\cdot\,)\right)^{**}(\varrho)
            \right\|_{X(0,\infty)}
            \ge
            \frac{1}{\beta+1}
            r^{\beta+1}\left\|\varrho^{\alpha-1}\chi_{(r,\infty)}(\varrho)\right\|_{X(0,\infty)},
    \end{equation}
    and equation~\eqref{E:lemma-for-ri-norms} follows from ~\eqref{E:lemma-for-ri-norms-8} and~\eqref{E:lemma-for-ri-norms-9}.
        \\
    Now let $\beta\in(-1,0)$. Then,
    \begin{equation}\label{E:lemma-for-ri-norms-10}
        \left((\,\cdot\,)^{\beta}\chi_{(0,r)}(\,\cdot\,)\right)^{*}(\varrho)
        =\varrho^{\beta}\chi_{(0,r)}(\varrho) \qquad\text{for $\varrho >0$}
    \end{equation}
    and 
    \begin{equation}\label{E:lemma-for-ri-norms-11}
        \left((\,\cdot\,)^{\beta}\chi_{(0,r)}(\,\cdot\,)\right)^{**}(\varrho)
        =
        \frac{1}{\beta+1}
        \varrho^{\beta}\chi_{(0,r)}(\varrho)
        +
        \frac{1}{\beta+1}
        r^{\beta+1}{\varrho}^{-1}\chi_{(r,\infty)}(\varrho)
        \qquad\text{for $\varrho >0$.}
    \end{equation}
    Therefore,
    \begin{align}
        \label{E:lemma-for-ri-norms-12}
        &\left\|\varrho^{\alpha}\left((\,\cdot\,)^{\beta}\chi_{(0,r)}(\,\cdot\,)\right)^{**}(\varrho)
        \right\|_{X(0,\infty)}
            \\
        &\qquad\le
        \frac{1}{\beta+1}
        \left\|
        \varrho^{\alpha+\beta}\chi_{(0,r)}(\varrho)
        \right\|_{X(0,\infty)}
        +
        \frac{1}{\beta+1}
        r^{\beta+1}
        \left\|{\varrho}^{\alpha-1}\chi_{(r,\infty)}(\varrho)
        \right\|_{X(0,\infty)}
        \qquad\text{for $\varrho >0$.}\nonumber
    \end{align}
    Since $\alpha+\beta\ge0$, the function $\varrho\mapsto \varrho^{\alpha+\beta}$ is increasing on the interval $(0,r)$. Thus, using also the rearrangement-invariance of $X$ and the fact that the function $\varrho\mapsto \varrho^{\beta+1}$ is increasing on $[r,2r]$, we have
    \begin{align}
        \label{E:lemma-for-ri-norms-13}
        \left\|
        \varrho^{\alpha+\beta}
        \chi_{(0,r)}(\varrho)
        \right\|_{X(0,\infty)}
        &\le
        \left\|
        \varrho^{\alpha+\beta}
        \chi_{(r,2r)}(\varrho)
        \right\|_{X(0,\infty)}
            \le(2r)^{\beta+1}
        \left\|
        \varrho^{\alpha-1}
        \chi_{(r,2r)}(\varrho)
        \right\|_{X(0,\infty)}
            \\
        &\le
        2^{\beta+1}r^{\beta+1}
        \left\|
        \varrho^{\alpha-1}
        \chi_{(r,\infty)}(\varrho)
        \right\|_{X(0,\infty)}.
            \nonumber
    \end{align}
    Plugging this into~\eqref{E:lemma-for-ri-norms-12}, one arrives at
    \begin{align}
        \label{E:lemma-for-ri-norms-14}
        \left\|\varrho^{\alpha}\left((\,\cdot\,)^{\beta}\chi_{(0,r)}(\,\cdot\,)\right)^{**}(\varrho)
        \right\|_{X(0,\infty)}
        &\le
        \frac{2^{\beta+1}+1}{\beta+1}
        r^{\beta+1}
        \left\|
        \varrho^{\alpha-1}
        \chi_{(r,\infty)}(\varrho)
        \right\|_{X(0,\infty)}.
     \end{align}
     Conversely, neglecting the first addend in~\eqref{E:lemma-for-ri-norms-11}, we get~\eqref{E:lemma-for-ri-norms-9} once again (with the same constant). This establishes~\eqref{E:lemma-for-ri-norms} for $\beta\in(-1,0)$ and finishes the proof.
\end{proof}

\begin{lemma}\label{lemma-old}
Let $\gamma >0$, let $A$ be a Young function and let $R>0$. Assume that $g$ is a nonnegative function in $L^A(0, \infty)$, vanishing on $(0, R)$, and 
 non-increasing in $(R, \infty)$. Then, there exists a  nonnegative  non-increasing function $f\in L^A(0, \infty)$ such that
\begin{equation*}
\|f\|_{L^A(0, \infty)}\leq c \|g\|_{L^A(0, \infty)}
\end{equation*}
for some  absolute constant $c$, and
\begin{equation*}
\int_R^\infty g(r) \, r^{-\gamma} \; dr \approx \int_R^\infty f(r) \, r^{-\gamma} \; dr\,,
\end{equation*}
up to multiplicative constants depending only on $\gamma$.
\end{lemma}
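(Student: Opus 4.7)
The plan is to construct $f$ explicitly by flattening $g$ to a constant on $(0,2R)$ while keeping it equal to $g$ on $[2R,\infty)$: set
\[
    c := \frac{1}{R}\int_R^{2R} g(u)\,du,
    \qquad
    f(r) := c\,\chi_{(0,2R)}(r) + g(r)\,\chi_{[2R,\infty)}(r).
\]
The first step is to check that $f$ is non-increasing on $(0,\infty)$: it is constant on $(0,2R)$ and non-increasing on $[2R,\infty)$ by the hypothesis on $g$, and the junction at $r=2R$ is harmless because the monotonicity of $g$ on $(R,\infty)$ forces $g(u)\ge g(2R)$ for $u\in(R,2R)$, hence $c\ge g(2R)$.

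For the $L^A$-bound I would set $\lambda := \|g\|_{L^A(0,\infty)}$ and invoke Jensen's inequality for the convex function $A$ against the probability measure $\tfrac{du}{R}$ on $(R,2R)$, which yields $R\,A(c/\lambda)\le\int_R^{2R} A(g(u)/\lambda)\,du$. Substituting this into the Luxemburg integral of $f$,
\[
    \int_0^\infty A(f/\lambda)\,dr
    = 2R\,A(c/\lambda) + \int_{2R}^\infty A(g/\lambda)\,du
    \le 2\int_R^\infty A(g/\lambda)\,du \le 2,
\]
so a simple rescaling argument based on~\eqref{Ak} yields $\|f\|_{L^A(0,\infty)}\le 2\|g\|_{L^A(0,\infty)}$, with absolute constant as required.

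For the weighted-integral equivalence I would split $\int_R^\infty g(u)u^{-\gamma}\,du = I_1 + I_2$, with $I_1$ over $(R,2R)$ and $I_2$ over $(2R,\infty)$. Since $f\equiv g$ on $[2R,\infty)$, the tail piece $I_2$ is preserved identically, and on $(R,2R)$ one has $u^{-\gamma}\approx R^{-\gamma}$ up to a factor $2^\gamma$, so $I_1\approx R^{-\gamma}\cdot Rc = R^{1-\gamma}c$. Combining with $\int_R^{2R} r^{-\gamma}\,dr\approx R^{1-\gamma}$ (both approximations with $\gamma$-dependent constants) gives $c\int_R^{2R} r^{-\gamma}\,dr\approx I_1$, whence $\int_R^\infty f(r)r^{-\gamma}\,dr\approx \int_R^\infty g(u)u^{-\gamma}\,du$. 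The structurally delicate step is the junction inequality $c\ge g(2R)$; without it the construction would not be monotone, and the naive alternative of extending $g$ by its possibly huge value $g(R^+)$ on $(0,R)$ would wreck the $L^A$-bound whenever $g$ is sharply peaked near $R$.
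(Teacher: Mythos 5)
Your construction is sound and the verification is complete; the argument works. In brief: the monotonicity of $g$ on $(R,\infty)$ gives $c\ge g(2R)$, so $f$ is indeed non-increasing; Jensen's inequality against the normalized Lebesgue measure on $(R,2R)$ gives $R\,A(c/\lambda)\le\int_R^{2R}A(g/\lambda)$ with $\lambda=\|g\|_{L^A(0,\infty)}$, hence $\int_0^\infty A(f/\lambda)\le 2\int_0^\infty A(g/\lambda)\le 2$ (using that $g$ vanishes on $(0,R)$ and the standard fact that $\int A(g/\|g\|_{L^A})\le1$), and then \eqref{Ak} with factor $2$ yields $\|f\|_{L^A(0,\infty)}\le 2\|g\|_{L^A(0,\infty)}$; finally, on $(R,2R)$ the weight $r^{-\gamma}$ is constant up to the factor $2^{\gamma}$, which gives $c\int_R^{2R}r^{-\gamma}\,dr\approx\int_R^{2R}g(u)u^{-\gamma}\,du$ and hence the claimed equivalence with constants depending only on $\gamma$. (Your aside about why extending $g$ by the boundary value $g(R^+)$ would fail is also correct: $R\,A(g(R^+)/\lambda)$ can be arbitrarily large while $\|g\|_{L^A}$ stays bounded if $g$ spikes near $R$, and the averaging is precisely what makes the Jensen step controllable.)

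One thing worth flagging: the paper does not actually supply its own proof. It simply cites \cite[Lemma~5.2]{ACPS_modulus}, where the statement is given for $\gamma>1$, and remarks that an inspection of that proof shows it applies for every $\gamma>0$. I therefore cannot compare your construction line by line with the cited one, but what you propose is a self-contained, elementary argument that establishes the result directly for all $\gamma>0$, which is arguably cleaner than the paper's ``inspect an external proof and note it extends'' route. The argument does what the lemma asks, with an explicit absolute constant $c=2$ in the $L^A$ bound and constants $2^{\pm\gamma}$ in the weighted equivalence.
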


 Lemma \ref{lemma-old} is stated in 
 \cite[Lemma~5.2]{ACPS_modulus} for  $\gamma > 1$. An inspection of  its proof reveals that, in fact, it applies for every $\gamma >0$.

\begin{proof}[Proof of Theorem \ref{teo2}]
      Throughout this proof, the constants appearing in the estimates and in the relations  $\lq\lq \lesssim "$ and $\lq\lq \approx "$ depend only on $n$ and $s$.
    \\ Part (i). 
    An application of Theorem~\ref{thm_Z} with $s-1$ in place of $s$ (which is possible thanks to our assumptions $s\in(1,n+1)$ and~\eqref{intzero})
     guarantees that 
    \begin{equation}\label{emb-s-1}
        \| v \|_{ X_{s-1}(\rn)} \leq c \,| \nabla^{[s]-1} v|_{\{s\}, A , \rn}
    \end{equation}
    for some constant $c$ and every $v\in V_{d,0}^{s,A}(\rn)$.  If $u\in V^{s,A}_{d,1}(\rn)$, then,
    by the very definition of $V^{s,A}_{d,1}(\rn)$, each first-order partial derivative of $u$ belongs to $V^{s-1,A}_{d,0}(\rn)$, and
    \begin{equation}\label{103'}
    \big| \nabla^{[s]-1} (\nabla  u)\big|_{ \{s\}, A, \rn}  = \big|\nabla^{[s]} u\big|_{ \{s\}, A, \rn}.
    \end{equation}
    Thus, applying~\eqref{emb-s-1} with $v$ replaced with each component of $\nabla u$ and coupling the resultant inequality  with~\eqref{103'} yield
    \begin{equation}\label{104'}
    \| \nabla u \|_{ X_{s-1}(\rn)} \leq c| \nabla^{[s]} u|_{\{s\}, A,\rn}
    \end{equation}
    for every $u\in V_{d,1}^{s,A}(\rn)$. Thus, equations ~\eqref{100} and~\eqref{102} will follow if we show that
    \begin{equation}\label{E:may-9-one}
        |u|_{\mathcal{L}^{\psi_{s,A}}(\rn)}\lesssim\|\nabla u\|_{X_{s-1}(\rn)}
    \end{equation}
    for every $u\in V^1X_{s-1}(\rn)$.
    Thanks to Theorem~A, the proof of ~\eqref{E:may-9-one} reduces to verifying that
    \begin{equation}\label{E:may-9-two}
        \psi_{s,A}(r)
            \approx
        r^{-n}
        \|\varrho^{\frac{1}{n}}\chi_{(0,r^n)}(\varrho)\|_{X_{s-1}'(0,\infty)}\qquad\text{for $r\in(0,\infty)$.}
    \end{equation}
    Fix $r>0$. By the definition of the norm $\|\cdot\|_{X_{s-1}(0,\infty)}$, 
    \begin{equation}\label{E:may-9-three}
        \|\varrho^{\frac{1}{n}}\chi_{(0,r^n)}(\varrho)\|_{X_{s-1}'(0,\infty)}
            =
        \left\|
        \varrho^{\frac{s-1}{n}}
        \left[
        (\,\cdot\,)^{\frac{1}{n}}
        \chi_{(0,r^n)}(\,\cdot\,)
        \right]^{**}(\varrho)
        \right\|_{L^{\tilde A}(0,\infty)}.
    \end{equation}
    From Lemma~\ref{L:lemma-for-ri-norms} applied with $\alpha=\frac{s-1}{n}$, $\beta=\frac{1}{n}$ and $r$ replaced by $r^n$, we deduce that
    \begin{equation}\label{E:may-9-four}
        \left\|
        \varrho^{\frac{s-1}{n}}
        \left[
        (\,\cdot\,)^{\frac{1}{n}}
        \chi_{(0,r^n)}(\,\cdot\,)
        \right]^{**}(\varrho)
        \right\|_{L^{\tilde A}(0,\infty)}
            \approx
        r^{n+1}
        \left\|
        \varrho^{\frac{s-1}{n}-1}
        \chi_{(r^n,\infty)}(\varrho)
        \right\|_{L^{\tilde A}(0,\infty)},
    \end{equation}
    with equivalence constants depending only on $n$.
    We have that
    \begin{equation}\label{aug100}
        \big\|
        \varrho^{\frac {s-1}n -1}
        \chi_{(r^n, \infty)}(\varrho) 
        \big \|_{L^{\widetilde{A}} (0, \infty)}
            =
        \inf\left\{ \lambda >0: 
        \int_{r^n}^{\infty}\widetilde{A}\left (\frac{\varrho^{-1+ 
        \frac{s-1}{n}}}{\lambda}\right)\,d\varrho
        \leq 1 \right\} 
        \quad \text{for $r>0$.}
    \end{equation}
    Computations   show that 
    \begin{equation}\label{E:may-9-five}
       \int_{r^n}^{\infty}\widetilde{A}\left (\frac{\varrho^{-1+ 
        \frac{s-1}{n}}}{\lambda}\right)\,d\varrho
            =
        \frac{n}{n+1-s}\, r^n
        F\left(
        \frac{r^{s-n-1}}{\lambda}
        \right),
    \end{equation}
    see~\eqref{F}.
    Hence, by~\eqref{aug100} and~\eqref{E:may-9-five},
    \begin{equation}\label{E:may-9-five-bis}    
        \big\|
        \varrho^{\frac {s-1}n -1}
        \chi_{(r^n, \infty)}(\varrho) 
        \big \|_{L^{\widetilde{A}}(0,\infty)}       \approx
        \frac{r^{s-n-1}}{F^{-1}(r^{-n})}
        \qquad \text{for $r>0$.}
    \end{equation}
    By~\eqref{E:may-9-three}, \eqref{E:may-9-four}, ~\eqref{E:may-9-five-bis} and~\eqref{101}, we obtain that
    \begin{equation}\label{E:may-9-six}
        r^{-n}
        \|\varrho^{\frac{1}{n}}\chi_{(0,r^n)}(\varrho)\|_{X_{s-1}'(0,\infty)}
            \approx
        r\,\big\|
        \varrho^{\frac {s-1}n -1}
        \chi_{(r^n, \infty)}(\varrho) 
        \big \|_{L^{\widetilde{A}}(0,\infty)}   
            \approx
        \frac{r^{s-n}}{F^{-1}(r^{-n})}
            =
        \psi_{s,A}(r).
    \end{equation}
   Equation ~\eqref{E:may-9-two} is thus established.  

    It remains to show that 
    $\mathcal{L}^{\psi_{s, A}}(\rn)$  is optimal
    among all Campanato type target spaces.
    Assume that $\varphi$ is an admissible function such that
    \begin{equation}\label{102-bis}
        |u|_{\mathcal{L}^{\varphi}(\rn)}       \leq
        c\, \big|\nabla^{[s]}u\big|_{\{s\},A,\rn}
    \end{equation}
    for $u \in V^{s,A}_{d,1}(\rn)$.
     Given $L>0$ and a nonnegative non-increasing  function $f\in L^A(0,\infty)$ with ${\rm supp} \,f \subset [0, L]$,  define the function $v_f\colon \rn \to \R$ as
      \begin{equation}\label{E:uf}
        v_f(x)=x_1
        \int_{\omega_n|x|^n}^\infty
        f(r) \, r^{-[s]-1+\frac{s-1}{n}}
        (r-\omega_n|x|^n)^{[s]}\,dr \qquad\text{for $x\in\rn$.}
      \end{equation}
       Fix any ball $B\subset\rn$ centered at $0$ and choose $L$ so that $L>|B|$. 
      By~\cite[Lemma~5.1]{ACPS_modulus},  
      \begin{equation}\label{128}
    |\nabla^{[s]}  v_f |_{\{s\}, A, \rn} \lesssim \|f\|_{L^A(0, \infty)}\,.
    \end{equation}
    Hence, $v_f \in V^{s,A}_{d,1}(\rn)$. Moreover,  $(v_f)_{B} =0$, inasmuch as $v_f$ is odd with respect to $x_1$.
    Therefore,
    \begin{equation}\label{129}
    \int_{B} |v_f(x)- (v_f)_{B}|\; dx =\int_{B} |v_f(x)|\; dx\,.
    \end{equation}
    Set
    \begin{equation*}
    E= \{x \in  B\colon 2\, |x_1|   \geq |x| \}\,.
    \end{equation*}
    The following chain relies upon  changes of variables and  elementary estimates:
    \begin{align}\label{130}
        \int_{B} |v_f (x)|\,d x  
            &=   
        \int_{B} |x_1| \int_{\omega_n|x|^n}^\infty f(r)\, r^{-[s]-1+\frac{s-1}{n}}
        (r-\omega_n|x|^n)^{[s]}\,dr\,dx
             \\
         &\geq
        \int_{E} \frac{|x|}{2} 
        \int_{\omega_n|x|^n}^\infty   
        f(r)\, r^{-[s]-1+\frac{s-1}{n}}
        (r-\omega_n|x|^n)^{[s]}\,dr\,dx
        \nonumber
            \\
        &\approx 
        \int_{B}|x| 
        \int_{\omega_n|x|^n}^\infty 
        f(r)\, r^{-[s]-1+\frac{s-1}{n}}
        (r-\omega_n|x|^n)^{[s]}\,dr\,dx\nonumber
            \\
        &\approx 
        \int_0^{|B|
        ^{\frac{1}{n}}}\varrho^n 
        \int_{\omega_n \varrho^n}^\infty   
        f(r)\, r^{-[s]-1+\frac{s-1}{n}}
        (r-\omega_n\rho^n)^{[s]}\,dr\,d\varrho\nonumber
            \\
        &\approx 
        \int_0^{|B|}
        \sigma^{\frac 1n}
        \int_{\sigma}^\infty
        f(r)\, r^{-[s]-1+\frac{s-1}{n}}
        (r-\sigma)^{[s]}\,dr\,d\sigma\nonumber
            \\
        &\ge
        \int_0^{\frac{|B|}{2}} 
        {\sigma}^{\frac 1n}  
        \int_{|B|}^\infty
        f(r)\, r^{-[s]-1+\frac{s-1}{n}}
        (r-\sigma)^{[s]}\,dr\,d\sigma\nonumber
          \\
        &\approx
        \int_0^{\frac{|B|}{2}}
        {\sigma}^{\frac 1n}
        \int_{|B|}^\infty
        f(r)\, r^{-[s]-1+\frac{s-1}{n}}r^{[s]}\,dr\,
        d\sigma\nonumber
            \\
        &\approx  
        |B|^{1+\frac{1}{n}}
        \int_{|B|}^{\infty}  
        f(r)\, r^{-1+\frac{s-1}{n}}\,dr.\nonumber
    \end{align}
    From \eqref{102-bis},
    \eqref{129}, \eqref{128} and~\eqref{130} we obtain that
    \begin{align}\label{E:dau}
        c &\ge
        \sup\left\{
        \frac{1}
        {|\nabla^{[s]}v_f|_{\{s\},A,\rn}}
        \frac{1}
        {\varphi(|B|^{\frac{1}{n}})}
        \medint_B
        |v_f|\,dx: 
        f\downarrow,\ 
        \operatorname{supp} f
        \subset[0,L]
        \right\}
            \\
        & \gtrsim
        \sup\left\{
        \frac{1}{\|f\|_{L^A(0,\infty)}}
        \frac{|B|^{\frac{1}{n}}}
        {\varphi(|B|^{\frac{1}{n}})}
        \int_{|B|}^{\infty}
        f(r)\, r^{-1+\frac{s-1}{n}}\,
        dr:
        f\downarrow,\ 
        \operatorname{supp} f
        \subset[0,L]\right\}
            \nonumber \\
        &
        = \sup\left\{
        \frac{1}{\|f\|_{L^A(0,\infty)}}
        \frac{|B|^{\frac{1}{n}}}
        {\varphi(|B|^{\frac{1}{n}})}
        \int_{|B|}^{\infty}
        f(r)\, r^{-1+\frac{s-1}{n}}\,
        dr:
        f\downarrow\right\},
            \nonumber
    \end{align}
where the equality holds owing to an approximation argument and the Fatou property of Luxemburg norms. 
\\
    Thanks to  Lemma \ref{lemma-old},
    the last supremum in \eqref{E:dau} does not  increase, up to a multiplicative constant, if the class of admissible functions $f$ is modified as to consist of 
    those $f$ which vanish on $(0,|B|)$ and do not increase on $(|B|,\infty)$. 
    Thus, one has that
\begin{equation}\label{E:tri}
        c \gtrsim
        \sup\left\{
        \frac{1}{\|f\|_{L^A(0,\infty)}}
        \frac{|B|^{\frac{1}{n}}}
        {\varphi(|B|^{\frac{1}{n}})}
        \int_{|B|}^{\infty}
        f(r)\, r^{-1+\frac{s-1}{n}}\,
        dr:
        f\downarrow\ \text{on $(|B|,\infty)$},\ 
         f=0 \, \text{in}\,   [0, |B|]\right\}.
    \end{equation}
    Hence, by ~\eqref{E:dual-monotone}, 
\begin{equation}\label{E:pump}
        c \gtrsim
        \frac{|B|^{\frac{1}{n}}}
        {\varphi(|B|^{\frac{1}{n}})}
        \left\|
        r^{-1+\frac{s-1}{n}}
        \chi_{(|B|,\infty)}(r)
        \right\|_{L^{\widetilde{A}}(0,\infty)}.
    \end{equation}
    Consequently, by~\eqref{E:may-9-five-bis} and~\eqref{101},
    \begin{align}\label{E:chwech}
        c \gtrsim
        \frac{|B|^{\frac{s-n}{n}}}
        {F^{-1}(|B|^{-1})\varphi(|B|^{\frac{1}{n}})}
        =
        \frac{\psi_{s,A}(|B|^{\frac{1}{n}})}{\varphi(|B|^{\frac{1}{n}})}\, .
    \end{align}
    Hence, owing to the arbitrariness of  $B$,
    \begin{align}\label{E:saith}
        \psi_{s,A}(r)
        \lesssim
        \varphi(r)
        \qquad\text{for $r>0$}.
    \end{align}
    The optimality of $\psi_{s,A}$ in~\eqref{100} and~\eqref{102} is thus established.
    \\ Part (ii). Assume first that $\varphi$ is an admissible function such that~ \eqref{limit1'} and \eqref{limit'} are satisfied. By \eqref{limit1'}, Part (i) ensures that  $V^{s,A}_{d,1}(\rn)\to \mathcal{L}^{\varphi}(\rn)$. Moreover, 
    owing to~\eqref{102},  
    \begin{align*}
 \lim_{r\to0^+}&\,
\sup_{u\in V^{s,A}_{d,1}(\rn), |\nabla^{[s]}u|_{\{s\}, A, \rn} \leq 1 } 
    \left( 
    \sup_{B\subset \rn, |B| \leq r} \; \frac 1{\varphi (|B|^{\frac{1}{n}})}\medint _B |u - u_B| \; dx
    \right)
    \\ 
& = 
 \lim_{r\to0^+}
    \sup_{B\subset \rn, |B| \leq r} \; \frac{\psi_{s,A}(|B|^{\frac{1}{n}})}
        {\varphi(|B|^{\frac{1}{n}})}  \left( 
        \sup_{|\nabla^{[s]}u|_{\{s\}, A, \rn} \leq 1 } 
        \frac{1}
        {\psi_{s,A}(|B|^{\frac{1}{n}})}\medint _B |u - u_B| \; dx
    \right)
    \\ 
    &\lesssim
        \lim_{r\to0^+}
        \sup_{B\subset\rn,\,|B|\le r}
        \frac{\psi_{s,A}(|B|^{\frac{1}{n}})}
        {\varphi(|B|^{\frac{1}{n}})}
        =0,
    \end{align*}
    and~\eqref{100v} follows owing to~\eqref{limit'}. \\ Conversely, suppose that $\varphi$ is an admissible function such that~\eqref{100v} holds. 
      Therefore, also~\eqref{100} holds, whence, by Part 1,   \eqref{limit1'} is satisfied. It remains to verify~\eqref{limit'}. Fix $\varepsilon>0$. By~\eqref{100v}, equation ~\eqref{E:vanishing-higher} is fulfilled. Hence, there exists $r>0$ such that 
    \begin{equation}\label{E:a-haon}
        \frac{1}{\varphi(|B|^{\frac{1}{n}})}
        \medint_B|u-u_B|\,dx<\varepsilon
    \end{equation}
    for every $u$ satisfying 
    $|\nabla^{[s]}u|_{\{s\},A;\rn}\le 1$ 
    and every ball 
    $B\subset\rn$ such that $|B|\le r$.   Fix  a ball $B$ centered at $0$ and satisfying $|B|\le r$. Let $v_f$ be defined as \eqref{E:uf}, with $f$ to be chosen later.  
 Equation ~\eqref{128} enables us to apply~\eqref{E:a-haon} to  $u=c^{-1}v_f$ for some appropriate positive constant $c$ independent of $B$ and $f$. Thereby, we obtain
    \begin{equation}\label{E:a-ceathair}
        \frac{1}{\varphi(|B|^{\frac{1}{n}})}
        \medint_B|v_f-(v_f)_B|\,dx<c\, \varepsilon,
    \end{equation}
    whence, via~\eqref{129}, 
    \begin{equation}\label{E:a-cuig}
        \frac{1}{\varphi(|B|^{\frac{1}{n}})}
\medint_B|v_f|\,dx<c\, \varepsilon.
    \end{equation}
Coupling the latter equation with \eqref{130} implies that
    \begin{equation}\label{E:a-se}
        \frac{|B|^{\frac{1}{n}}}{\varphi(|B|^{\frac{1}{n}})}
            \int_{|B|}^{\infty}
        f(r)\, r^{-1+\frac{s-1}{n}}\,dr
         \lesssim\varepsilon .
    \end{equation}    
    Owing to Fatou's lemma for Orlicz norms, an approximation argument shows that the inequality \eqref{E:a-se} continues to hold even the support of $f$ is not bounded.
 By  ~\eqref{E:dual-monotone}, there exists  a non-increasing function $g: (|B|, \infty) \to [0, \infty)$ such that  $\|g\|_{L^A(0,\infty)}\le1$ and
    \begin{equation}\label{E:a-do}
        \int_{|B|}^{\infty}
        g(r)\, r^{-1+\frac{s-1}{n}}\,dr
        \ge
        \frac{1}{2}
        \|r^{-1+\frac{s-1}{n}}
        \chi_{(|B|,\infty)}(r)\|_{L^{\widetilde{A}}(0,\infty)}.
    \end{equation}
    By   Lemma \ref{lemma-old},
    there exists a non-increasing function $f\colon [0,\infty) \to [0, \infty)$ such that $\|g\|_{L^A(0,\infty)}\gtrsim \|f\|_{L^A(0,\infty)}$, and
    \begin{equation}\label{E:a-tri}
        \int_{|B|}^{\infty}
        f(r)\, r^{-1+\frac{s-1}{n}}\,dr
        \gtrsim
        \|r^{-1+\frac{s-1}{n}}
        \chi_{(|B|,\infty)}(r)\|_{L^{\widetilde{A}}(0,\infty)}.
    \end{equation}
    Combining \eqref{E:a-se} with \eqref{E:a-tri} tells us that
    \begin{equation}\label{E:a-seacht}
        \frac{|B|^{\frac{1}{n}}}{\varphi(|B|^{\frac{1}{n}})}
        \|r^{-1+\frac{s-1}{n}}
        \chi_{(|B|,\infty)}(r)\|_{L^{\widetilde{A}}(0,\infty)}
    \lesssim\varepsilon.
    \end{equation}
    From ~\eqref{127} and~\eqref{E:may-9-five-bis}, one obtains that
    \begin{equation}\label{E:a-naoi}
        \frac{\psi_{s,A}(|B|^{\frac{1}{n}})}
        {\varphi(|B|^{\frac{1}{n}})}
        =
        \frac{|B|^{\frac{s-n}{n}}}{F^{-1}(|B|^{-1})\varphi(|B|^{\frac{1}{n}})}
        \lesssim\varepsilon.
    \end{equation}
   Hence, equation \eqref{limit'} follows since equation \eqref{E:a-naoi}  holds for every ball centered at zero and satisfying $|B|\le r$.   
\end{proof}

\section{Case $s>1$ and $k\geq 0$}\label{sec5}

This section concerns  the embeddings \eqref{emb-intro2}  and \eqref{emb-intro3} for   $s\in (1,\infty)\setminus\N$ 
and $k \geq 0$. Since the case $k=0$ was already dealt with in the preceding section,  in the proofs we shall assume that $k\geq 1$.

The following theorem about the admissible  choices of $s$ and $A$ in \eqref{emb-intro2} extends  Theorem \ref{teo3}.

\begin{theorem}[\bf Admissible $s$ and $A$ for $s>1$ and $k \geq 0$]\label{teo3bis}
Let $s\in (1, \infty)\setminus \N$ and let $A$ be a Young function.  Assume that the   embedding
\begin{equation}\label{july1k}
V_{d,k+1}^{s,A}(\rn) \to   \mathcal{L}^{k,\varphi} (\rn)
\end{equation}
holds for some  admissible function $\varphi$ and for some $k \in \{0, \dots, [s]-1\}$.  Then,
\begin{equation}\label{sk}
s\in (1, n+k+1)
\end{equation}
and
\begin{equation}\label{intzerok}
\int_0 \left (\frac t{A(t)}\right)^{\frac {s-(k+1)}{n-(s-(k+1))}} \; dt <\infty\,.
\end{equation}
\end{theorem}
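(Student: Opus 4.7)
The plan is to mimic the two-step structure of the proof of Theorem~\ref{teo3}, making modifications to accommodate the $k$-th order Campanato seminorm. For the necessity of \eqref{sk}, I would test the embedding against the family
\begin{equation*}
u_j(x) = j^{s-n}\xi\Big(\tfrac{x}{j}\Big), \qquad j\in\N,
\end{equation*}
where $\xi \in C^\infty_0(\rn)$ is a fixed nonnegative function satisfying the non-degeneracy condition $\partial_{x_1}^{k+1}\xi(0) \neq 0$, a natural generalization of $\nabla \xi(0) \neq 0$ used for $k=0$. Each $u_j$ belongs to $V^{s,A}_{d,k+1}(\rn)$ since it is compactly supported, and the uniform upper bound $|\nabla^{[s]} u_j|_{\{s\},A,\rn}\leq c$ is obtained by repeating verbatim the chain~\eqref{101a}--\eqref{103}, which uses only the smoothness and compact support of $\xi$ together with the linear estimate $A(t)\leq c\,t$ near zero.

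The principal new ingredient is the lower bound for the Campanato seminorm. Evaluating on the fixed ball $B_1(0)$ and exploiting the covariance of the polynomial projection under the dilation $x=jy$ (which follows directly from the orthogonality conditions \eqref{Poli_cond}), one verifies that $P^k_{B_1(0)}[u_j](x) = j^{s-n}P^k_{B_{1/j}(0)}[\xi](x/j)$, leading to the identity
\begin{equation*}
\medint_{B_1(0)}\bigl|u_j-P^k_{B_1(0)}[u_j]\bigr|\,dx = j^{s-n}\medint_{B_{1/j}(0)}\bigl|\xi - P^k_{B_{1/j}(0)}[\xi]\bigr|\,dy.
\end{equation*}
I would then Taylor-expand $\xi$ around $0$ up to order $k+1$, writing $\xi = T_k + R_{k+1} + O(|y|^{k+2})$ with $R_{k+1}$ the homogeneous degree-$(k+1)$ component, and perform a further rescaling $y=z/j$ on the inner integral to produce
\begin{equation*}
\medint_{B_{1/j}(0)}\bigl|\xi - P^k_{B_{1/j}(0)}[\xi]\bigr|\,dy = j^{-(k+1)}\medint_{B_1(0)}\bigl|R_{k+1} - P^k_{B_1(0)}[R_{k+1}]\bigr|\,dz + O(j^{-(k+2)}).
\end{equation*}
Since $R_{k+1}$ is a non-zero polynomial of degree exactly $k+1$, its projection onto polynomials of degree $\leq k$ cannot equal $R_{k+1}$ itself, so the constant in the main term above is strictly positive. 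This yields $|u_j|_{\mathcal{L}^{k,\varphi}(\rn)}\gtrsim j^{s-n-k-1}$ for large $j$, and coupling with the embedding~\eqref{july1k} forces $s\leq n+k+1$; the conclusion \eqref{sk} then follows since $s\notin\N$.

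For~\eqref{intzerok} I would argue as in the final paragraph of the proof of Theorem~\ref{teo3}, referring forward to the proof of Theorem~\ref{teo_k} in Section~\ref{sec5}. That proof, relying on higher-order analogues of the test functions $v_f$ from~\eqref{E:uf} (with the prefactor $x_1$ replaced by a homogeneous polynomial of degree $k+1$ orthogonal to all polynomials of degree $\leq k$), will produce an estimate of the form
\begin{equation*}
\frac{|B|^{\frac{1}{n}}}{\varphi(|B|^{\frac{1}{n}})}\,\Bigl\|r^{-1+\frac{s-k-1}{n}}\chi_{(|B|,\infty)}(r)\Bigr\|_{L^{\widetilde A}(0,\infty)}\leq c,
\end{equation*}
generalizing~\eqref{E:pump}, for every ball $B\subset\rn$ centered at the origin. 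A change of variables identical in spirit to the one in the final part of the proof of Theorem~\ref{teo3} then shows that the finiteness of this Luxemburg norm for a single $B$ is equivalent, via~\eqref{equiv-cond}, to~\eqref{intzerok}. The main obstacle is securing the Campanato lower bound, i.e., ensuring that the orthogonal projection onto polynomials of degree $\leq k$ does not absorb the $(k+1)$-st Taylor term of $\xi$; once this is resolved by the degree-counting argument above, the remaining steps are straightforward adaptations of the $k=0$ case already detailed in the proofs of Theorems~\ref{teo3} and \ref{teo2}.
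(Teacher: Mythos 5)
Your proposal is correct and follows the same overall structure as the paper's proof (scaled test functions $u_j(x)=j^{s-n}\xi(x/j)$, a uniform bound on $|\nabla^{[s]}u_j|_{\{s\},A,\rn}$ reusing \eqref{101a}--\eqref{103}, a lower bound for the $k$-th order Campanato seminorm on $B_1(0)$ of size $j^{s-n-k-1}$, and a forward reference to the proof of Theorem~\ref{teo_k} for the integral condition \eqref{intzerok}), but the crucial lower-bound step is handled by a genuinely different mechanism. The paper chooses $\xi$ equal near the origin to a homogeneous \emph{harmonic} polynomial $H$ of degree $k+1$, and invokes \cite[Lemma~5.4]{CCPS_HC} to conclude that $P^k_{B_1(0)}[u_j]=0$ exactly once $j$ is large enough, so the Campanato numerator collapses to $\medint_{B_{1/j}(0)}|H|\,dx=j^{-(k+1)}\medint_{B_1(0)}|H|\,dz$. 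You instead only require $\partial_{x_1}^{k+1}\xi(0)\ne0$, then use the dilation covariance of $P^k_B$ (which does indeed follow from \eqref{Poli_cond}) and a Taylor expansion to isolate the homogeneous degree-$(k+1)$ term $R_{k+1}$, so the leading contribution is $j^{-(k+1)}\medint_{B_1(0)}\bigl|R_{k+1}-P^k_{B_1(0)}[R_{k+1}]\bigr|\,dz$, which is strictly positive by degree counting. Your route avoids the external lemma at the cost of tracking the $O(j^{-(k+2)})$ error (one should note that $P^k_{B_{1/j}(0)}[\rho]$ has coefficients controlled by the averages of $\nabla^h\rho$, $h\le k$, so the remainder is harmless); the paper's route is cleaner because no expansion is needed. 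One small remark: the paper's choice makes the stated nonnegativity of $\xi$ impossible near $0$ (a non-zero harmonic polynomial of positive degree changes sign), so this requirement is vestigial there, whereas in your version a nonnegative $\xi$ with the required Taylor coefficient does exist. Both arguments reach the same conclusion $s<n+k+1$, and your treatment of \eqref{intzerok} via \eqref{E:opt-7} and the change-of-variables reduction to \eqref{equiv-cond} matches the paper exactly.
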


Assume that $s\in (0, \infty)\setminus\N$,  $k\in \{0, \ldots, [s]-1\}$ and a Young function $A$ are such that the conditions \eqref{sk}  and \eqref{intzerok} are fulfilled.
Define  the Young function $F_k$  by
\begin{equation}\label{Fk}
{F_k}(t)= t^{\frac{n}{n-s+(k+1)}}
\int_0^t \frac{\widetilde{A}(\tau)}{\tau^{1+\frac{n}{n-s+(k+1)}}}\; d\tau \qquad \text{for $t \geq 0$,}
\end{equation}
and the function $\psi_{s,A}^k : (0, \infty) \to [0, \infty)$ as
\begin{equation}\label{varphi_k}
    \psi_{s,A}^k(r)=  
        \begin{cases}
      \displaystyle\frac{1}{r^{n-s+k} F_k^{-1}(r^{-n})} 
      & 
      \hbox{if}\;\; k\in\{0, \ldots, [s]-1\}
        \\
        \\
    r^{\{s\}} 
    A^{-1}\left(\frac{1}{r^n} 
    \right) & \hbox{if}\;\; k=[s]
    \end{cases}
\end{equation}
for $r>0$.
\\
Thanks to the property~\eqref{AAtilde}, the function $\psi_{s,A}^k$ admits the following equivalent expression:
\begin{equation}
    \label{E:alter-varphi_k}
    \psi_{s,A}^k(r)
    \approx  
    \begin{cases}
        \displaystyle
        {r^{k-s}}
        {\widetilde{F}_k^{-1}
        (r^{-n})} 
        & 
        \hbox{if}\;\; 
        k\in\{0, \ldots, [s]-1\}
            \\
            \\
        \displaystyle
        \frac{r^{\{s\}-n}}
        {\widetilde{A}^{-1}
         \left(\frac{1}{r^n} 
        \right)}
        & \hbox{if}\;\; k=[s]
    \end{cases}
\end{equation}
for $r>0$.
\\
Observe that $\psi_{s,A}^0=\psi_{s,A}$, where $\psi_{s,A}$ is defined by \eqref{101}.

The optimal Campanato target space in the embedding \eqref{emb-intro2} or \eqref{emb-intro3} is associated with the function $\psi_{s,A}^k$. This is asserted in Theorem \ref{teo_k}
below, which also provides us with a characterization of uniform embeddings into vanishing Campanato spaces.

Given $k \in \{0, \dots, [s]-1\}$, the embedding $V^{s,A}_{d,k+1}(\rn) \to V\!\mathcal{L}^{k, \varphi}(\rn)$ is said to hold \emph{uniformly} if $V^{s,A}_{d,k+1}(\rn) \to \mathcal{L}^{k, \varphi}(\rn)$ and
\begin{equation}\label{E:vanishing-higher-k}
    \lim_{r\to0^+}
    \sup_{u\in V^{s,A}_{d,k+1}, |\nabla^{[s]}u|_{\{s\}, A, \rn} \leq 1 } 
    \left( 
    \sup_{B\subset \rn, |B| \leq r} \; \frac 1{\varphi (|B|^{\frac{1}{n}})|B|^{\frac{k}{n
    }}}\medint _B |u - P^k_B[u]| \; dx
    \right)=0.
\end{equation}
An analogous definition can be given for the uniform embedding $V^{s,A}(\rn) \to V\!\mathcal{L}^{[s], \varphi}(\rn)$.

\begin{theorem}
[\bf Optimal embeddings into Campanato spaces for $s>1$ and $k\geq 0$]
\label{teo_k}
Assume that $k\in\N\cup\{0\}$ and $s>1$ are such that $k\le[s]$, and $s\in (1, n+k+1)\backslash \N$.
Let $A$ be a Young function.
\\
(i) Assume that $k\in \{0, \ldots, [s]-1\}$ and that $A$ fulfills the condition \eqref{intzerok}. Then, 
\begin{equation}\label{201}
 V^{s,A}_{d, k+1}(\rn) \to \mathcal{L}^{k, \psi_{s,A}^k}(\rn)  \end{equation}
and there exists a constant $c=c(n,s,k)$ such that
\begin{equation}\label{202}
   |u|_{\mathcal{L}^{k, \psi_{s,A}^k}(\rn)} \leq c \, \big| \nabla ^{[s]} u\big|_{\{s\}, A, \rn}
\end{equation}
for $u\in  V^{s,A}_{d, k+1}(\rn)$. Moreover, $\mathcal{L}^{k,\psi_{s,A}^k}(\rn)$ is the optimal  Campanato target space in \eqref{201} and \eqref{202}.
\\
If $\varphi$ is an admissible function, then
\begin{equation}
    \label{E:higher-vanishing-1}
    V^{s, A}_{d,k+1}(\rn) \to 
    V\!\mathcal{L}^{k, \varphi}(\rn)
    \qquad \text{uniformly}
    \end{equation}
if and only if 
\begin{align}
    \label{naples1}
\psi_{s,A}^k(r) \leq 
    \varphi (r) \qquad \text{for $r>0$} 
\end{align}
and
\begin{equation}
\label{E:higher-vanishing-2}
    \lim_{r \to 0^+} 
    \frac{\psi_{s,A}^k(r)}
    {\varphi (r)} =0.
\end{equation}
\\
(ii) Assume that $k= [s]$. Then,
\begin{equation}\label{203}
 V^{s,A}(\rn) \to \mathcal{L}^{[s], \psi_{s,A}^{[s]}}(\rn)  \end{equation}
and there exists a constant $c=c(n,s)$ such that
\begin{equation}\label{204}
|u|_{\mathcal{L}^{[s], \psi_{s,A}^{[s]}}(\rn)} \leq c \, \big| \nabla ^{[s]} u\big|_{\{s\}, A, \rn}
\end{equation}
for  $u\in  V^{s,A}(\rn)$. Moreover, $\mathcal{L} ^{[s], \psi_{s,A}^{[s]}}(\rn) $ is the optimal  Campanato target space in \eqref{203} and \eqref{204}.
\\
If $\varphi$ is an admissible function, then
\begin{equation}
    \label{E:higher-vanishing-3}
    V^{s, A}(\rn) \to 
V\!\mathcal{L} ^{[s], \varphi}(\rn)
    \qquad \text{uniformly}
    \end{equation}
if and only if
\begin{align}
    \label{naples2}
\psi_{s,A}^{[s]}(r) \leq 
    \varphi (r) \qquad \text{for $r>0$} 
\end{align}
and
\begin{equation}
\label{E:higher-vanishing-4}
    \lim_{r \to 0^+} 
    \frac{\psi_{s,A}^{[s]}(r)}
    {\varphi (r)} =0.
\end{equation}
\end{theorem}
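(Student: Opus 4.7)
The plan is to bootstrap the results already proven for $k=0$ (Theorem \ref{teo2}) and for fractional smoothness below one (Theorem \ref{teo1}) by applying them to $\nabla^k u$, and then to transfer the resulting zero-order Campanato estimates back to $k$-th order Campanato estimates on $u$ via an iterated Poincar\'e inequality. For $u\in V^{s,A}_{d,k+1}(\rn)$ with $k\in\{0,\ldots,[s]-1\}$, each scalar component $v$ of $\nabla^k u$ belongs to $V^{s-k,A}_{d,1}(\rn)$, since the derivatives of $v$ up to order $[s]-k$ inherit the decay hypothesis on $\nabla^{k+1}u,\ldots,\nabla^{[s]}u$. Applying Theorem \ref{teo2} with smoothness parameter $s-k\in(1,n+1)\setminus\N$, whose hypothesis \eqref{intzero} reads as \eqref{intzerok} after this substitution, yields $|v|_{\mathcal{L}^{\psi_{s-k,A}}(\rn)}\lesssim\bigl|\nabla^{[s]-k}v\bigr|_{\{s\},A,\rn}=\bigl|\nabla^{[s]}u\bigr|_{\{s\},A,\rn}$. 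A direct comparison of \eqref{Fk} and \eqref{varphi_k} shows $\psi_{s-k,A}\equiv\psi_{s,A}^k$, so the correct target function emerges. For Part (ii), the analogous step uses Theorem \ref{teo1} applied to each component of $\nabla^{[s]}u\in V^{\{s\},A}(\rn)$, noting that $\varphi_{\{s\},A}(r)=r^{\{s\}}A^{-1}(r^{-n})=\psi_{s,A}^{[s]}(r)$.

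To transfer the estimate on $\nabla^k u$ to a $k$-th order Campanato estimate on $u$, I would observe that $\nabla^k P_B^k[u]=(\nabla^k u)_B$, which follows from the orthogonality conditions \eqref{Poli_cond} since $\nabla^k P$ is a constant for any polynomial $P$ of degree $\le k$. Thus, for $w:=u-P_B^k[u]$ one has $\int_B\nabla^h w\,dx=0$ for $h=0,\ldots,k$, and iterated Poincar\'e--Wirtinger yields
\begin{equation*}
    \medint_B |w|\,dx
    \lesssim
    |B|^{\frac{k}{n}}
    \medint_B |\nabla^k w|\,dx
    =
    |B|^{\frac{k}{n}}
    \medint_B |\nabla^k u - (\nabla^k u)_B|\,dx.
\end{equation*}
Dividing by $\varphi(|B|^{1/n})|B|^{k/n}$ and taking the supremum over balls yields $|u|_{\mathcal{L}^{k,\varphi}(\rn)}\lesssim|\nabla^k u|_{\mathcal{L}^{\varphi}(\rn)}$, which combined with the previous step establishes \eqref{202} and \eqref{204}.

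For the optimality of $\psi_{s,A}^k$, I would mimic the test function argument in the proof of Theorem \ref{teo2} (or of Theorem \ref{teo1} when $k=[s]$) by replacing the $x_1$ prefactor in \eqref{E:uf} with $x_1^{k+1}$ and adjusting the exponent $-[s]-1+(s-1)/n$ to $-[s]-1+(s-k-1)/n$ in the inner integral. Parity in $x_1$ and rotational symmetry in the remaining variables restrict the form of $P_B^k[u_f]$ enough that $\medint_B|u_f-P_B^k[u_f]|\,dx$ can be bounded from below by an explicit integral of $f$ analogous to the final line of \eqref{130}, with an additional factor of $|B|^{k/n}$ accounting for the polynomial prefactor. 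An upper bound $|\nabla^{[s]}u_f|_{\{s\},A,\rn}\lesssim\|f\|_{L^A(0,\infty)}$ would be obtained by a modification of \cite[Lemma~5.1]{ACPS_modulus} that handles the higher-degree polynomial prefactor. Lemma \ref{lemma-old} and \eqref{E:dual-monotone} then convert these bounds into the $k$-th order analogue of \eqref{E:pump}, which via \eqref{E:may-9-five-bis} (or the direct relation for $k=[s]$) yields $\psi_{s,A}^k(r)\lesssim\varphi(r)$. The uniform vanishing characterizations \eqref{E:higher-vanishing-1} and \eqref{E:higher-vanishing-3} then follow along the lines of the proof of Theorem \ref{teo2}(ii), the same test functions forcing both \eqref{naples1}, \eqref{E:higher-vanishing-2} (respectively \eqref{naples2}, \eqref{E:higher-vanishing-4}). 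I expect the main technical obstacle to lie in the optimality step: one must carefully control $P_B^k[u_f]$ for the modified test function and prove the higher-order polynomial analogue of \cite[Lemma~5.1]{ACPS_modulus}, while the Poincar\'e transfer and the identification $\psi_{s-k,A}\equiv\psi_{s,A}^k$ are essentially routine.
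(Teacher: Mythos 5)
The embedding inequalities \eqref{202} and \eqref{204} are established by you along a route that is different in detail but in the end equally valid: rather than invoking Theorem~\ref{thm_Z} with $s-k-1$ and the H\"older/Lemma~\ref{L:lemma-for-ri-norms} machinery as the paper does, you bootstrap from Theorem~\ref{teo2} applied to $\nabla^k u$ (for Part~(i)) or from Theorem~\ref{teo1} applied to $\nabla^{[s]}u$ (Part~(ii), which is in fact exactly the paper's argument), and then pass from a zero-order Campanato estimate on $\nabla^k u$ to a $k$-th order Campanato estimate on $u$ via $P^{k-1}_B\big[u-P^k_B[u]\big]=0$, $\nabla^k P^k_B[u]=(\nabla^k u)_B$, and Lemma~\ref{L:1-half}. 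The identification $\psi_{s-k,A}=\psi_{s,A}^k$ (comparing \eqref{101}, \eqref{F} against \eqref{varphi_k}, \eqref{Fk}) and the inclusion $\nabla^k u\in V^{s-k,A}_{d,1}(\rn)$ check out. This modular bootstrap is legitimate and arguably cleaner than the paper's direct computation; both ultimately hinge on the same Poincar\'e-type lemma.

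The optimality argument, however, contains a genuine gap. You propose replacing the prefactor $x_1$ in the test functions \eqref{E:uf} by $x_1^{k+1}$. The paper instead uses a \emph{homogeneous harmonic polynomial} $H$ of degree $k+1$ (see \eqref{226}), and the harmonicity is essential: it is precisely what guarantees, via the arguments referenced from \cite[Lemma~5.4 and equations~(6.5),~(6.7)]{CCPS_HC}, that $P^k_B[w_f]=0$ for every ball $B$ centered at the origin, see \eqref{E:opt-3}. This vanishing reduces the lower bound for $\medintinrigo_B|w_f-P^k_B[w_f]|\,dx$ to a lower bound for $\medintinrigo_B|w_f|\,dx$, which is tractable. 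For $k\ge1$ the monomial $x_1^{k+1}$ is not harmonic (e.g.\ $\Delta x_1^2=2$), and the corresponding $P^k_B[u_f]$ need not vanish. Already for $k=1$ one finds $P^1_B[u_f]=(u_f)_B\neq0$, and for $k=2$ one gets a nontrivial linear polynomial $a\,x_1$; in general $P^k_B[u_f]$ is a nonzero polynomial whose coefficients depend on $f$, so one must lower-bound the $L^1$ distance from $u_f$ to the full space of degree-$\le k$ polynomials rather than merely estimate $\medintinrigo_B|u_f|\,dx$. Your appeal to ``parity in $x_1$ and rotational symmetry'' does not by itself resolve this: those symmetries restrict the form of $P^k_B[u_f]$ but do not make it vanish, and you offer no argument that the residual polynomial can be absorbed into the desired lower bound uniformly in $f$. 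Similarly, the higher-order analogue of \cite[Lemma~5.1]{ACPS_modulus} that you invoke is exactly what the paper proves as Lemma~\ref{lemma1}, and that lemma is also formulated for harmonic $H$. The fix is simply to take $H$ to be a homogeneous harmonic polynomial of degree $k+1$ (as the paper does), not $x_1^{k+1}$; with that substitution the rest of your optimality plan (Lemma~\ref{lemma-old}, \eqref{E:dual-monotone}, and the passage to \eqref{E:pump}) goes through as intended.
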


We begin with the proof of Theorem \ref{teo3bis}.

\begin{proof}[Proof of Theorem \ref{teo3bis}] The outline of this proof is analogous to that of Theorem~\ref{teo3}. Assume that the embedding \eqref{july1k} holds for some $s\in (1, \infty) \setminus \N$.
Let $\xi \in C^\infty_0(\rn)$  be  a nonnegative function such that
\begin{align}\label{prague1}
\xi (x) = H(x) \qquad \text{in a neighborhood of $0$,}
\end{align}
where $H$ is a homogeneous harmonic polynomial of degree $k+1$.
 For each $j \in \N$,   consider the function $u_j \colon  \rn \to \mathbb  R$ defined as
\begin{equation}\label{100b}
u_j (x)= j^{s-n} \xi \Big(\frac x j\Big) \qquad \hbox{for}\;\; x\in \rn\,.
\end{equation}
The same argument which yields  the inequality \eqref{may30} in the proof of Theorem \ref{teo3}
shows that $u_j \in V^{s,A}_{d,k+1}$ and  there exists a constant $c$, independent of $j$, such that
\begin{equation}\label{may30bis}
|\nabla ^{[s]}u_j|_{\{s\}, A, \rn} \leq c.
\end{equation}
The embedding \eqref{july1k} and the inequality \eqref{may30bis} imply that there exists a constant $c$ such that
\begin{equation}\label{july2k}
|u_j|_{\mathcal L^{k,\varphi}(\rn)} \leq c
\end{equation}
for every $j \in \N$.
\\ Next, as above, denote by $B_r(0)$ the ball, centered at $0$, with radius $r$.
The coefficients of the polynomial $P^k_{B_{1}(0)}[u_j]$ are linear combinations of the components of
\begin{equation}\label{dec15}
\int_{B_{1}(0)}u_j (x)\, dx, \dots , \int_{B_{1}(0)}\nabla ^ku_j (x)\, dx.
\end{equation}
These integrals agree with
\begin{equation}\label{dec10}
j^n\int_{B_{1/j}(0)}\xi (x)\, dx, \dots , j^{n-k}\int_{B_{1/j}(0)}\nabla ^k\xi (x)\, dx.
\end{equation}
 By \eqref{prague1}
and \cite[Lemma 5.4]{CCPS_HC},
\begin{equation}\label{dec11}
P^k_{B_{1}(0)}[u_j]=0,
\end{equation}
provided that $j$ is sufficiently large. Therefore,
\begin{align}\label{dec13}
|u_j|_{\mathcal L^{k,\varphi}(\rn)} & = \sup_{B} \frac 1{\varphi (|B|^\frac 1n)|B|^\frac kn} \medint_{B}\big|u_j-P^k_B[u_j]\big|\, dx
\geq    \frac 1{\varphi \big(\omega_n^\frac 1n\big)} \medint_{B_1(0)}|u_j-P^k_{B_{1}(0)}[u_j]|\, dx
\\ \nonumber  & =    \frac 1{\varphi \big(\omega_n^\frac 1n\big)}\medint_{B_1(0)}|u_j|\, dx =  \frac {j^{s-n}}{\varphi \big(\omega_n^\frac 1n\big)} \medint_{B_1(0)}|\xi(x/j)|\, dx =  c\, \frac {j^{s-n}}{\varphi \big(\omega_n^\frac 1n\big)} \medint_{B_{1/j}(0)}|\xi(x)|\, dx
\\ \nonumber & = c \,j^{s-n} \medint_{B_{1/j}(0)}|H(x)|\, dx \geq c'\, j^{s-n-k-1}
\end{align}
for suitable constants $c$ and $c'$, independent of $j$.
Equation \eqref{sk} follows from \eqref{july2k} and \eqref{dec13}, by letting $j \to \infty$.

 As for the necessity of the condition \eqref{intzerok}, we refer to the proof of the sharpness of Theorem \ref{teo_k}. Indeed, 
equation \eqref{E:opt-7} of that proof 
tells that
\begin{equation}\label{dec20}
 \| r ^{-1+\frac{s-k-1}n}\|_{L^{\widetilde A}(1, \infty)}<\infty,
\end{equation}
and the finiteness of this norm is equivalent to \eqref{intzerok}.
\end{proof}

The proof of Theorem \ref{teo_k} makes use of the following lemmas.

\begin{lemma}\label{L:1-half}
    Let $n \geq 2$ and $k\in\N$. Then, there exists a constant $c=c(n,k)$   such that
    \begin{equation}\label{E:L1}
        \left\|u  - {P}^{k-1}_{B}[u] \right\|_{L^{1}(B)} \leq c\, |B|^{\frac{k}{n}}\left\|\nabla^{k}u\right\|_{L^1(B)}
    \end{equation}
    for every $u\in W^{k,1}_{\rm loc}(\rn)$ and every ball $B\subset\rn$.
\end{lemma}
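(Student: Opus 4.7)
The plan is to derive~\eqref{E:L1} from an iterated application of the classical $L^1$ Poincaré--Wirtinger inequality on balls. Set $v := u - P^{k-1}_B[u]$ and observe two consequences of the definition of $P^{k-1}_B[u]$ via~\eqref{Poli_cond}: first, since $P^{k-1}_B[u]$ is a polynomial of degree at most $k-1$, one has $\nabla^k v = \nabla^k u$ a.e.\ in $B$; second, the orthogonality conditions~\eqref{Poli_cond} read precisely
\begin{equation*}
    \int_B \partial^{\alpha} v\,dx = 0
    \qquad\text{for every multi-index $\alpha$ with $|\alpha|\le k-1$.}
\end{equation*}

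The analytic ingredient I would invoke is the standard Poincaré--Wirtinger inequality in $L^1$: there exists $c_0 = c_0(n)$ such that $\int_B |w - w_B|\,dx \le c_0\,|B|^{\frac{1}{n}} \int_B |\nabla w|\,dx$ for every ball $B\subset\rn$ and every $w\in W^{1,1}(B)$. Applying it componentwise to $w = \partial^{\alpha} v$ for each $\alpha$ with $|\alpha| = h$, where $h\in\{0,1,\dots,k-1\}$, and invoking the vanishing mean supplied by the orthogonality above, I get
\begin{equation*}
    \|\nabla^{h} v\|_{L^1(B)}
    \le
    c(n,k)\,|B|^{\frac{1}{n}}\,\|\nabla^{h+1} v\|_{L^1(B)}
    \qquad\text{for $h = 0,1,\dots,k-1$.}
\end{equation*}

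Chaining these $k$ estimates telescopically and using $\nabla^k v = \nabla^k u$ in the final step yields
\begin{equation*}
    \|v\|_{L^1(B)}
    \le
    c(n,k)\,|B|^{\frac{k}{n}}\,\|\nabla^k u\|_{L^1(B)},
\end{equation*}
which is exactly~\eqref{E:L1}. There is no genuine obstacle here; the only point worth emphasizing is that the conditions~\eqref{Poli_cond} are tailor-made so that the mean-zero hypothesis required by Poincaré--Wirtinger is automatically available at each of the $k$ intermediate differentiation levels, so no additional affine correction needs to be reintroduced between successive iterations.
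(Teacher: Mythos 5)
Your proof is correct, and it takes a genuinely different and more elementary route than the paper's. The paper derives~\eqref{E:L1} by quoting a Sobolev--Poincar\'e estimate from~\cite[Lemma~3.1]{CCPS_HC}: for $k\le n-1$ it bounds $\|u-P^{k-1}_B[u]\|_{L^{n/(n-k)}(B)}$ by $\|\nabla^k u\|_{L^1(B)}$ (and, for $k\ge n$, the $L^\infty$ norm instead), then downgrades to $L^1$ with H\"older's inequality to recover the factor $|B|^{k/n}$. You, by contrast, never pass through a higher-integrability space: you set $v=u-P^{k-1}_B[u]$, note that~\eqref{Poli_cond} forces $\int_B\partial^\alpha v\,dx=0$ for every $|\alpha|\le k-1$, and iterate the scalar $L^1$--$L^1$ Poincar\'e--Wirtinger inequality on balls at each level $h=0,\dots,k-1$, chaining the resulting one-derivative gains of $|B|^{1/n}$ telescopically down to $\nabla^k v=\nabla^k u$. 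This is self-contained (only the basic Poincar\'e inequality with its standard scaling-invariant constant), avoids the case split on $k$ versus $n$ entirely, and in fact does not even use the hypothesis $n\ge2$. What the paper's route buys is the stronger higher-integrability statement (useful elsewhere in~\cite{CCPS_HC}) as an intermediate byproduct; for the present $L^1$-to-$L^1$ conclusion the two deliver the same estimate. One small point worth being explicit about when you sum over $|\alpha|=h$: the equivalence $|\nabla^h v|\approx\sum_{|\alpha|=h}|\partial^\alpha v|$ with constants depending only on $n$ and $h$, which is standard and harmless.
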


\begin{proof} One has that
\begin{equation}\label{E:L1-1}
        \left\|u  - {P}^{k-1}_{B}[u] \right\|_{L^{\frac{n}{n-k}}(B)}
        \leq c\,\left\|\nabla^{k}u\right\|_{L^1(B)} \qquad \  \text{for} \ k\in\{1,\dots, n-1\} 
    \end{equation}
    and
    \begin{equation}\label{E:L1-2}
        \left\|u  - {P}^{k-1}_{B}[u] \right\|_{L^{\infty}(B)}
        \leq c \, |B|^{-1 +\frac{k}{n}}\left\|\nabla^{k}u\right\|_{L^1(B)} \qquad \ \text{for} \  k\geq n
    \end{equation}
    for some constant $c=c(n,k)$ and for every $u$ and $B$ as in the statement --
see e.g. \cite[Lemma~3.1]{CCPS_HC}.
\\ Assume first that $k\le n-1$. By H\"{o}lder's inequality and the inequality \eqref{E:L1-1},
    \begin{equation}\label{E:L:1-half-1}
        \int_{B} |u - P^{k-1}_{B}[u] | \, dx
        \leq
        |B|^{\frac{k}{n}}\left\|u-P^{k-1}_{B}[u]\right\|_{L^{\frac{n}{n-k}}(B)}\leq c\, |B|^{\frac{k}{n}}\int_{B}|\nabla ^{k}u|\, dx,
    \end{equation}
   namely \eqref{E:L1}. Next, suppose that $k\ge n$. By the inequality \eqref{E:L1-2},
    \begin{equation}\label{E:L:1-half-2}
        \int_{B} |u - P^{k-1}_{B}[u] | \, dx
        \leq
       |B|\, \left\|u-P^{k-1}_{B}[u]\right\|_{L^{\infty}(B)}\leq c \,|B|^{\frac{k}{n}}\int_{B}|\nabla ^{k}u|\, dx,
    \end{equation}
    whence \eqref{E:L1} follows.
\end{proof}

\begin{lemma}\label{lemma1}
Let $k\in\N\cup\{0\}$ and $s>1$ be such that $k\le[s]$ and $s\in (1, n+k+1)\backslash \N$.
Let $H$ be a homogeneous harmonic polynomial of degree $k+1$.
Assume that $f$ is a nonnegative, non-increasing function in $L^A(0, \infty)$ with bounded support.
Let $w_f : \rn \to \R$ be the function defined by
\begin{equation}\label{226}
w_f(x) = H(x)\, \int_{\omega_n \,|x|^n}^\infty \int_{r_1}^\infty \dots \int_{r_{[s]}}^\infty  f(r_{[s] +1})\, r_{[s] +1}^{-[s] -1+\frac{s-(k+1)}{n}}
\;dr_{[s]+1} \dots dr_1 \qquad \text{for $x\in \rn$.}
\end{equation}
Then,
\begin{equation}\label{sep109}
|\nabla^{[s]}  w_f |_{\{s\}, A, \rn} \leq c \,\|f\|_{L^A(0, \infty)}\,
\end{equation}
for some constant $c=c(n,k,s)$.
\end{lemma}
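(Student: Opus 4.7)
\medskip
\noindent\textbf{Proof plan for Lemma \ref{lemma1}.}
The statement is the higher-order counterpart of the estimate used in Theorem~\ref{teo2}(i), where the case $k=0$ (with $H(x)=x_1$) is covered by \cite[Lemma~5.1]{ACPS_modulus}. My plan is to reduce to the same type of pointwise/seminorm estimate used there, with the single modification that the linear factor $x_1$ is replaced by a homogeneous harmonic polynomial of degree $k+1$.

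First I would apply Fubini to the $[s]+1$-fold nested integral and collapse it into a single integral. Setting $\rho=\omega_n|x|^n$ and integrating $r_1,\dots,r_{[s]}$ out of the simplex $\{\rho\le r_1\le\dots\le r_{[s]}\le r_{[s]+1}\}$, one obtains
\begin{equation*}
w_f(x)=\frac{H(x)}{[s]!}\,\int_{\omega_n|x|^n}^{\infty} f(r)\, r^{-[s]-1+\frac{s-(k+1)}{n}}\bigl(r-\omega_n|x|^n\bigr)^{[s]}\,dr.
\end{equation*}
This is exactly the structure treated in \cite[Lemma~5.1]{ACPS_modulus} when $k=0$ and $H(x)=x_1$, with the exponent $\frac{s-1}{n}$ replaced by $\frac{s-(k+1)}{n}$. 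Next I would differentiate $[s]$ times. Writing $w_f(x)=H(x)\,g(\omega_n|x|^n)$, the Leibniz rule gives
\begin{equation*}
\nabla^{[s]}w_f=\sum_{j=0}^{\min\{[s],k+1\}}\binom{[s]}{j}\,\nabla^{j}H\,\odot\,\nabla^{[s]-j}\bigl(g\circ(\omega_n|\cdot|^n)\bigr),
\end{equation*}
since $\nabla^{j}H\equiv0$ for $j>k+1$. The radial factors $g^{(m)}(\rho)$ can be peeled off iteratively, using that
\begin{equation*}
g^{(m)}(\rho)=\frac{(-1)^m}{([s]-m)!}\int_{\rho}^{\infty}f(r)\,r^{-[s]-1+\frac{s-(k+1)}{n}}(r-\rho)^{[s]-m}\,dr \qquad\text{for $0\le m\le[s]$,}
\end{equation*}
while the chain rule expansion of $\nabla^{[s]-j}\bigl(g\circ(\omega_n|\cdot|^n)\bigr)$ produces only homogeneous polynomial prefactors in $x$. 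Combined with the homogeneity of $\nabla^{j}H$ of degree $k+1-j$, each summand ends up being a polynomial of degree $n([s]-j)$ in $x$ (through the chain rule) times $\nabla^{j}H(x)$, times some $g^{(\ell)}(\omega_n|x|^n)$. The scaling $s\in(1,n+k+1)\setminus\N$ ensures that all resulting powers of $|x|$ are integrable where needed.

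After this explicit computation, the pointwise bound for $\nabla^{[s]}w_f(x)-\nabla^{[s]}w_f(y)$ reduces to estimating differences of radial integrals of the form $\int_{\omega_n|x|^n}^{\infty}\!\!f(r)\,r^{-1+\frac{s-(k+1)}{n}}\,dr$, analogously to \cite[Lemma~5.1]{ACPS_modulus}. From there I would follow the same estimation scheme: split the integration region according to whether $|x-y|$ is small or large relative to $\max\{|x|,|y|\}$, use the elementary inequality $|a^{\alpha}-b^{\alpha}|\lesssim|a-b|^{\alpha}(a+b)^{(\alpha-\beta)\vee 0}\cdots$ on the polynomial/radial differences to produce the correct $|x-y|^{\{s\}}$ factor, and finally invoke an Orlicz-Jensen (or Minkowski integral) inequality to bound $J_{\{s\},A}\bigl(\nabla^{[s]}w_f/\lambda\bigr)\le 1$ for some $\lambda\lesssim\|f\|_{L^A(0,\infty)}$. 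Concretely, the monotonicity and convexity of $A$ allow the $A$-value of an integral in $r$ to be controlled by an integral of $A$ values, yielding the Luxemburg norm of $f$ after a change of variable.

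The main obstacle I expect is bookkeeping in the explicit formula for $\nabla^{[s]}w_f$: the $j$-dependent interplay between the degrees of the polynomial prefactors coming from the chain rule (of size $n([s]-j)$) and from the differentiated harmonic polynomial $\nabla^{j}H$ (of degree $k+1-j$) must be tracked to verify that each summand produces the right $|x|$-decay to fit into the Orlicz estimate. Harmonicity of $H$ is not really needed here for the seminorm bound; it is a convenient property inherited from the application in Theorem~\ref{teo3bis}, and it will only help by making certain cross-terms vanish in Laplacian-type simplifications. Once the explicit form is in hand, the final passage to $\|f\|_{L^A(0,\infty)}$ is the standard Orlicz-Jensen step already implemented in \cite[Lemma~5.1]{ACPS_modulus}.
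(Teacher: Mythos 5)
Your plan follows essentially the same route as the paper: collapse the iterated integral by Fubini, differentiate $[s]$ times to obtain a finite linear combination of terms of the shape (polynomial in $x$) $\times$ (iterated radial integral), split the double integral in the seminorm into the far region $\{|y|\ge 2|x|\}$ and the near region $\{|x|\le|y|\le 2|x|\}$, and close with the property $\int_0^{\tau}A(t)\,dt/t\le A(\tau)$. Your side remark that harmonicity of $H$ is not actually used in the seminorm bound is also correct; it is only needed elsewhere (to kill the polynomial $P^k_B[w_f]$ in the optimality argument).

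There is, however, one genuine gap in the plan. You assert that \emph{``The scaling $s\in(1,n+k+1)\setminus\N$ ensures that all resulting powers of $|x|$ are integrable where needed,''} and then propose to estimate every surviving term by pulling $f(\cdot)$ out of the inner integral via monotonicity and evaluating $\int_{\omega_n|x|^n}^\infty r^{-1+\frac{s-(k+1)}{n}+(\text{stuff})}\,dr$. This breaks down precisely for the term in which all $[s]$ derivatives fall on the polynomial factor (your $j=[s]$, equivalently the paper's $m=[s]$). That term exists only when $k+1=[s]$ or $k=[s]$; in the first of these cases the relevant radial integral is $\int_{\omega_n|x|^n}^{\infty} r^{-1+\frac{\{s\}}{n}}\,dr$, which diverges at infinity, so ``pull out $f$, evaluate the power integral'' does not give the desired $|x|^{\{s\}}f(\omega_n|x|^n)$ bound. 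The paper isolates exactly this case and treats it differently: instead of peeling $f$ off pointwise, it keeps the full $([s]+1)$-fold nested integral, applies Jensen's inequality directly to the normalized nested integral, and then uses Fubini to recover $\int_0^\infty A(c\,f(r))\,dr$. Your generic scheme needs this separate branch; as written, the bookkeeping you are worried about would not save you, because the issue is a genuine divergence, not a tracking error.
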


\begin{proof}  Throughout this proof the constants involved in the inequalities and in the relations $\approx$ and $\lesssim$ depend only on  $n,k,s$.
To begin with, notice the alternative expression for $w_f$:
$$
w_f(x) = \frac {1}{[s]!} \, H(x) \,\int_{\omega_n |x|^n}^\infty f(r)\,  r^{-[s]-1+ \frac{s-(k+1)}{n}} \, (r - \omega_n |x|^n)^{[s]} \; dr \qquad
\hbox{for}\;\; x\in \rn$$
which follows from \eqref{226} via Fubini's theorem and will be used without explicit mention.
\\ To prove the inequality  \eqref{sep109}, it suffices to show that
\begin{equation}\label{150}
\int_{\rn} \int_{\rn} A\left( \frac{ | \nabla ^{[s]} w_f(x) - \nabla^{[s]} w_f(y)|}{|x-y|^{\{s\}}}\right) \; \frac{dx \, dy}{|x-y|^n} \leq \int_0^\infty A\left ( c \, f(r)\right)\; dr
\end{equation}
for some constant  $c$. 
One can verify that any ${[s]}-$th order partial derivative of $w_f$ is a linear combination of finitely many terms of the form
\begin{equation}\label{dec240}
x_{\alpha_1} \cdots x_{\alpha_i} \, |x|^{h} \, \int_{\omega_n |x|^n}^\infty f(r)\, r_{}^{ -{[s]} -1 + \frac {s-(k+1)}n}(r-\omega_n |x|^n)^m\; d r,
\end{equation}
where $i$ and $m$ are nonnegative integers, $m \leq [s]$, $\alpha_i \in\{1, \dots , n\}$ and $h$ is an integer such that
\begin{equation}\label{E:a}
 h =  n([s] -m)-i-[s] +k+1.
\end{equation} 
In addition, we notice that the term with $m=[s]$ appears only when $k=[s]$, or $k+1=[s]$.
\\
To estimate the left-hand side of the inequality \eqref{150},  we observe that, by symmetry, one may restrict the region of integration to the set $\{(x,y)\in \rn\times \rn: |x|\leq |y|\}$, and we split the latter into
 the subsets
 $\{|y| \geq 2 |x|\}$ and $\{2|x| > |y| \geq |x|\}$.
\\
Part 1: Estimate over  $\{|y| \geq 2 |x|\}$.
Let us first assume that  ${[s]}\neq k+1$. Trivially,
\begin{equation*}
|\nabla^{[s]} w_f(x) - \nabla^{[s]} w_f(y)| \leq |\nabla^{[s]} w_f(x)| + |\nabla^{[s]} w_f(y)| \qquad \text{for a.e. $x, y \in \rn$.}
\end{equation*}
We shall provide a bound for $|\nabla^{[s]} w_f(x)|$,  the bound for $|\nabla^{[s]} w_f(y)|$ being analogous, with $x$ replaced with $y$. 
 Thanks to the monotonicity of $f$, we deduce from \eqref{dec240} that $|\nabla^{[s]} w_f(x)|$ is bounded by a linear combination of finitely many terms of the form
 $$|x|^{h+i} f(\omega_n |x|^n) \int_{\omega_n |x|^n}^\infty r^{-[s] -1+\frac{s-(k+1)}{n}+m}\,dr,$$
 which satisfy the chain:
\begin{align}\label{E:estimate-y-2x}
|x|^{h+i} f(\omega_n |x|^n) \int_{\omega_n |x|^n}^\infty r^{-[s] -1+\frac{s-(k+1)}{n}+m}\,dr
&  \approx |x|^{h+i -n[s]+s-(k+1)+n\, m} f(\omega_n |x|^n)
\\ & \nonumber =|x|^{s-[s]} f(\omega_n |x|^n)= |x|^{\{s\}} f(\omega_n |x|^n).
\end{align}
The convergence of the integral in \eqref{E:estimate-y-2x} is due to the fact that
\begin{equation}\label{E:need-to-show}
[s]>\frac{s-(k+1)}{n}+m.
\end{equation}
To verify this inequality, observe that, owing to our assumptions,  $\frac{s-(k+1)}{n}<1$. Hence, the inequality~\eqref{E:need-to-show} is satisfied whenever $m \leq [s] -1$. In addition, since  $[s] \neq k+1$, one can have $m = [s]$ only  when $k=[s]$. In this case, the inequality~\eqref{E:need-to-show} reduces to the trivial inequality $s- [s] <1$. Let us also  note that the last but one equality in~\eqref{E:estimate-y-2x} holds thanks to~\eqref{E:a}.
 \\ Thanks to the above considerations, one has that
\begin{align*}
\int_{\Rn} &\int_{\{|y|\geq 2|x|\}} A\left(\frac{|\nabla^{[s]} w_f(x) - \nabla^{[s]} w_f(y)|}{|x-y|^{\{s\}}}\right) \frac{\,dx \,dy}{|x-y|^n}\\
&\lesssim
\int_{\Rn} \int_{\{|y| \geq 2|x|\}} A \left( c |x|^{\{s\}} |y|^{-\{s\}} f(\omega_n|x|^n) \right) \frac{dy}{|y|^n}\,dx
+ \int_{\Rn} \int_{\{|y| \geq 2|x|\}} A \left(c f(\omega_n|y|^n) \right) \,dx\frac{dy}{|y|^n}\\
&\lesssim \int_{\Rn} \int_{2|x|}^\infty A \left( c' |x|^{\{s\}} r^{-\{s\}} f(\omega_n|x|^n) \right) \frac{dr}{r}\,dx
+ \int_{\Rn} A \left( c' f(\omega_n|y|^n) \right) \,dy
\end{align*}
for some constants $c$ and $c'$.
The change of variables $t=c' |x|^{\{s\}} r^{-\{s\}} f(\omega_n|x|^n)$ yields
$$
\int_{2|x|}^\infty A \left( c' |x|^{\{s\}} r^{-\{s\}}  f(\omega_n|x|^n) \right) \frac{dr}{r}
\approx \int_0^{c' f(\omega_n |x|^n)} A(t) \frac{dt}{t}
\leq A\left(c' f(\omega_n |x|^n)\right),
$$
where the inequality holds by the property \eqref{monotone}.
Therefore,
\begin{align}\label{nov209}
\int_{\Rn} \int_{\{|y|\geq 2|x|\}} A\left(\frac{|\nabla^{[s]} w_f(x) - \nabla^{[s]} w_f(y)|}{|x-y|^{\{s\}}}\right) \frac{\,dx \,dy}{|x-y|^n}
\lesssim \int_{\Rn} A \left(c f(\omega_n|x|^n) \right) \,dx
= \int_0^\infty A \left(c f(r) \right) \,dr
\end{align}
for some constant $c$.
\\
 Assume next that ${[s]}=k+1$. Besides the terms in \eqref{dec240} with $m \leq [s]-1$, which can be estimated as above via an  expression of the form
 $$ \int_0^\infty A \left(c f(r) \right) \,dr,$$
 we have now also to  estimate the term with $m={[s]}$. Note that, in this case, $h=i=0$, and the expression \eqref{dec240} can be rewritten as
$$\int_{\omega_n|x|^n}^{\infty} \int_{r_1}^\infty \dots \int_{r_{[s]}}^\infty f(r_{ [s]+1} )\, r_{[s]+1} ^{  -[s]-1+\frac{s-(k+1)}{n}}\,dr_{[s]+1} \dots dr_1.$$
Altogether, 
 we obtain that
\begin{align*}
\int_{\R^n}\int_{\{|y|\ge2|x|\}} &
	A\left(\frac{|\nabla^{[s]} w_f(x)-\nabla^{[s]} w_f(y)|}{|x-y|^{\{s\}}}\right)\frac{dx\,dy}{|x-y|^n}\\ &  \lesssim
\int_{\R^n}\int_{\{|y|\ge2|x|\}}
			A\left(\frac{c\int_{\omega_n|x|^n}^{\omega_n|y|^n} \int_{r_1}^\infty \dots \int_{r_{[s]}}^\infty f(r_{ [s]+1} )\, r_{[s]+1} ^{-[s]-1+\frac{s-(k+1)}{n}}\,dr_{[s]+1} \dots dr_1}{|y|^{\{s\}}}\right)\frac{dy}{|y|^n}\,dx\\
   & \quad + \int_0^\infty A \left(c f(r) \right) \,dr
\end{align*}
for some constant $c$.
The following chain holds:
\begin{align}\label{nov200}
		&\int_{\R^n}\int_{\{|y|\ge2|x|\}}
			A\left(\frac{c\int_{\omega_n|x|^n}^{\omega_n|y|^n} \int_{r_1}^\infty \dots \int_{r_{[s]}}^\infty f(r_{[s]+1} )\, r_{[s]+1} ^{-[s]-1+\frac{s-(k+1)}{n}}\,dr_{[s]+1} \dots dr_1}{|y|^{\{s\}}}\right)\frac{dy}{|y|^n}\,dx
				\\  \nonumber
		& \lesssim \int_{\R^n}\int_{\{|y|\ge2|x|\}}
			\int_{\omega_n|x|^n}^{\omega_n|y|^n} \int_{r_1}^\infty \dots \int_{r_{[s]}}^\infty A\left(c'f(r_{[s]+1})\right)r_{[s]+1}^{-[s]-1+\frac{s-(k+1)}{n}}\,dr_{[s]+1} \dots dr_1\frac{dy}{|y|^{n+\{s\}}}\,dx
\\  \nonumber
		& \lesssim \int_{\R^n}\int_{\{|y|\ge2|x|\}}
			\int_{\omega_n|x|^n}^{\omega_n|y|^n}  A\left(c'f(r)\right)r^{-1+\frac{s-(k+1)}{n}}\,dr\frac{dy}{|y|^{n+\{s\}}}\,dx
				\\  \nonumber
		&
\leq \int_{0}^{\infty}A\left(c'f(r)\right)r^{-1+\frac{s-(k+1)}{n}}
				\int_{\{\omega_n|x|^n<r\}}\int_{\{|y|\ge2|x|\}}\frac{dy}{|y|^{n+\{s\}}}\,dx\,dr
				\\  \nonumber
		& \lesssim \int_{0}^{\infty}A\left(c'f(r)\right)r^{-1+\frac{s-(k+1)}{n}}
				\int_{\{\omega_n|x|^n<r\}}|x|^{-\{s\}}\,dx\,dr
				\\  \nonumber
		& \lesssim \int_{0}^{\infty}A\left(c'f(r)\right)r^{-1+\frac{\{s\}}{n}}r^{1-\frac{\{s\}}{n}}
				\,dr
				 = \int_{0}^{\infty}A\left(c'f(r)\right)\,dr
\end{align}
for some positive constant  $c'$.
Observe that the first inequality  relies upon Jensen's inequality. Hence, the inequality \eqref{nov209} holds  for ${[s]}=k+1$ as well.

\noindent
Part 2: Estimate over $\{2|x| \geq |y| \geq |x|\}$. As above, we first assume that ${[s]} \neq k+1$.
Set $j=[s] -m \geq 0$. Then, $|\nabla^{[s]} w_f(x) - \nabla^{[s]} w_f(y)|$ is bounded by a finite linear combination of terms of the form
\begin{align*}
& \bigg |x_{\alpha_1} \cdots  x_{\alpha_i} \, |x|^{h} \, \int_{\omega_n |x|^n}^\infty \int_{r_{j+1}}^\infty \cdots
\int_{r_{[s]}}^\infty f(r_{{[s]}+1})\, r_{{[s]}+1}^{ -{[s]} -1 + \frac {s-(k+1)}n}\; d r_{{[s]}+1} \cdots  d r_{j+1} \nonumber
\cr& \quad \quad \quad \quad \quad \quad\quad \quad \quad
- y_{\alpha_1} \cdots y_{\alpha_i} \, |y|^{h} \, \int_{\omega_n |y|^n}^\infty \int_{r_{j+1}}^\infty \cdots
\int_{r_{[s]}}^\infty f(r_{{[s]}+1})\, r_{{[s]}+1}^{ -{[s]} -1 + \frac {s-(k+1)}n}\; d r_{{[s]}+1} \cdots  d r_{j+1}\bigg |.  
\end{align*}
The latter expression is bounded by
\begin{align*}
 &
 \bigg |x_{\alpha_1} \cdots  x_{\alpha_i} \, |x|^{h} - y_{\alpha_1} \cdots y_{\alpha_i} \, |y|^{h}\bigg| \nonumber
\cr &  \quad \quad \quad \quad \quad \quad\quad \quad \quad
\times \,
\int_{\omega_n |y|^n}^\infty \int_{r_{j+1}}^\infty \cdots
\int_{r_{[s]}}^\infty f(r_{{[s]}+1})\, r_{{[s]}+1}^{ -{[s]} -1 + \frac {s-(k+1)}n}\; d r_{{[s]}+1} \cdots  d r_{j+1}\nonumber
\cr &
+  \bigg |x_{\alpha_1} \cdots  x_{\alpha_i} \, |x|^{h}\, \int_{\omega_n |x|^n}^{\omega_n |y|^n} \int_{r_{j+1}}^\infty \cdots
\int_{r_{[s]}}^\infty f(r_{{[s]}+1})\, r_{{[s]}+1}^{ -{[s]} -1 + \frac {s-(k+1)}n}\; d r_{{[s]}+1} \cdots  d r_{j+1}\bigg| \nonumber
\cr &
 = I + II
\end{align*}
for a.e. $x, y$ such that $2|x| \geq |y| \geq |x|$.
By \cite[Lemma 7.8]{ACPS_emb} (which still holds if the assumption $i\leq n$ appearing  in the statement is dropped), we deduce that, for the same $x$ and $y$,
\begin{align*}
I &\lesssim  |x-y|\, |x|^{h+i-1}\; \int_{\omega_n |y|^n}^\infty \int_{r_{j+1}}^\infty \cdots
\int_{r_{[s]}}^\infty f(r_{{[s]}+1})\, r_{{[s]}+1}^{ -{[s]} -1 + \frac {s-(k+1)}n}\; d r_{{[s]}+1} \cdots  d r_{j+1}
\cr &
\lesssim  |x-y|\, |x|^{h+i-1}\; \int_{\omega_n |y|^n}^\infty f(r)\, r^{ -j-1 + \frac {s-(k+1)}n}\; d r
\leq |x-y|\, |x|^{h+i-1}\; \int_{\omega_n |x|^n}^\infty f(r)\, r^{ -j -1 + \frac {s-(k+1)}n}\; d r \nonumber
\cr &
\lesssim  |x-y|\, |x|^{h+i-1}\;  f(\omega_n |x|^n)\, |x|^{ -jn  + s-(k+1)}
=  |x|^{\{s\} -1} |x-y|\,  f(\omega_n |x|^n), \nonumber
\end{align*}
where the third inequality holds since $|y|\geq |x|$, and the last one is due to~\eqref{E:a} and \eqref{E:need-to-show}.
\\
On the other hand,
\begin{align*}
II &\lesssim  |x|^{h+i} f(\omega_n |x|^n) \int_{\omega_n |x|^n}^{\omega_n |y|^n} \int_{r_{j+1}}^\infty \cdots
\int_{r_{[s]}}^\infty  r_{{[s]}+1}^{ -{[s]} -1 + \frac {s-(k+1)}n}\; d r_{{[s]}+1} \cdots  d r_{j+1}\\
\nonumber &
\lesssim  |x|^{h+i} f(\omega_n |x|^n) \int_{\omega_n |x|^n}^{\omega_n |y|^n} r^{-j-1+\frac{s-(k+1)}{n}}\,dr
\lesssim |x|^{h+i-jn+s-(k+1)-n} (|y|^n-|x|^n) f(\omega_n |x|^n)\\
\nonumber &
\lesssim  |x|^{\{s\} -1}\, |y-x|\,f(\omega_n |x|^n)
\end{align*}
for a.e. $x, y$ such that $2|x| \geq |y| \geq |x|$.
\\ If ${[s]}=k+1$, then the terms in the expression $\nabla w_f(x) - \nabla w_f(y)$ corresponding to $j \geq 1$ can be estimated as above, whereas
 the term   corresponding to  $j=i=h=0$ in \eqref{dec240} can be estimated, in absolute value, by
\begin{align*}
&\int_{\omega_n |x|^n}^{\omega_n |y|^n} \int_{r_{1}}^\infty \cdots
\int_{r_{[s]}}^\infty  f(r_{{[s]}+1}) \,r_{{[s]}+1}^{ -{[s]} -1 + \frac {s-(k+1)}n}\; d r_{{[s]}+1} \cdots  d r_{1}\\
&\lesssim  f(\omega_n |x|^n) \int_{\omega_n |x|^n}^{\omega_n |y|^n} r^{-1+\frac{s-(k+1)}{n}}\,dr
\lesssim |x|^{s-(k+1)-n} (|y|^n-|x|^n) f(\omega_n |x|^n)
\lesssim |x|^{\{s\}-1} |y-x| f(\omega_n |x|^n).
\end{align*}
Therefore,
\begin{align*}
\int_{\Rn} &\int_{\{|x| \leq |y| \leq 2|x|\}} A\left(\frac{|\nabla^{[s]} w_f(x) - \nabla^{[s]} w_f(y)|}{|x-y|^{\{s\}}}\right) \frac{\,dx \,dy}{|x-y|^n}\\
&\lesssim  \int_{\Rn} \int_{\{|x| \leq |y| \leq 2|x|\}} A\left(c|x|^{\{s\}-1}|y-x|^{ 1-\{s\}}f(\omega_n |x|^n)\right)\frac{dy}{|y-x|^n}\,dx\\
&= \int_{\Rn} \int_{\{|x| \leq |z+x| \leq 2|x|\}} A\left(c|x|^{\{s\}-1}|z|^{1-\{s\}}f(\omega_n |x|^n)\right)\frac{dz}{|z|^n}\,dx\\
&\leq\int_{\Rn} \int_{\{|z|\leq 3|x|\}} A\left(c|x|^{\{s\}-1}|z|^{1-\{s\}}f(\omega_n |x|^n)\right)\frac{dz}{|z|^n}\,dx\\
&\lesssim \int_{\Rn} \int_{0}^{3|x|} A\left(c|x|^{\{s\}-1} r^{1-\{s\}}f(\omega_n |x|^n)\right)\frac{dr}{r}\,dx
\end{align*}
for some positive constant $c$.
The change of variables $t=c|x|^{\{s\}-1} r^{1-\{s\}}f(\omega_n |x|^n)$ yields
$$
\int_{0}^{3|x|} A\left(c|x|^{\{s\}-1}r^{ 1-\{s\}}f(\omega_n |x|^n)\right)\frac{dr}{r}
\approx \int_0^{c f(\omega_n |x|^n)} A(t) \frac{\,dt}{t}
\leq A(c \,f(\omega_n |x|^n))
$$
for $x\in \rn$.
Thereby,
\begin{align}\label{nov210}
\int_{\Rn} \int_{\{|x|\leq|y|\leq 2|x|\}} A\left(\frac{|\nabla^{[s]} w_f(x) - \nabla^{[s]} w_f(y)|}{ |x-y|^{\{s\}}}\right) \frac{\,dx \,dy}{|x-y|^n}
\lesssim \int_{\Rn} A \left( c f(\omega_n|x|^n) \right) \,dx
= \int_0^\infty A \left(c f(r) \right) \,dr.
\end{align}
The inequality \eqref{150} follows from \eqref{nov209} and \eqref{nov210}.
\end{proof}

We are now in a position to accomplish the proof of Theorem \ref{teo_k}.

\begin{proof}[Proof of Theorem \ref{teo_k}] As observed above, we may focus on the case when $k \geq 1$, the case when $k=0$ having already been dispensed with in Section \ref{sec4}. Throughout this proof, the constants in the inequalities and in the relations $\lq\lq \lesssim"$ and $\lq\lq \approx "$   depend only on $n,s$ and $k$.
    \\\ Part (i).
Owing to Theorem~\ref{thm_Z} applied with $s$ replaced with $s-k-1$, an application which is permitted by~\eqref{intzerok} and the assumption $s<n+k+1$,
we have that
    \begin{equation*}
        \|v\|_{X_{s-k-1}(\mathbb R^n)}\lesssim |\nabla^{[s]-(k+1)}v|_{\{s\},A,\mathbb R^n}
    \end{equation*}
    for $v\in V^{s-(k+1),A}_{d,0}(\mathbb R^n)$. 
    If $u\in V^{s,A}_{d,k+1}(\rn)$, then each partial derivative of $u$ of order $k+1$ belongs to $V_{d,0}^{s-(k+1),A}(\rn)$, and 
    \begin{equation}\label{E:T4.4-1}
        \|\nabla^{k+1}u\|_{X_{s-k-1}(\mathbb R^n)}\lesssim |\nabla^{[s]}u|_{\{s\},A,\mathbb R^n}.
    \end{equation}
    Fix any ball $B\subset\rn$ and $u\in V^{s,A}_{d,k+1}(\mathbb R^n)$. 
    By Lemma~\ref{L:1-half}, applied with $k$ replaced with $k+1$, we have
   \begin{equation}\label{E:T:4.4.-4}
        \medint_{B} |u - P^k_{B}[u]| \; dx
         \lesssim\,
        |B|^{\frac{k+1}{n}-1}
        \int_{B}|\nabla ^{k+1}u|\; dx.
    \end{equation}
    Thanks to ~\eqref{E:T:4.4.-4}, \eqref{holder}, \eqref{E:T4.4-1} and~\eqref{emb-ass}, and to  the fact that $\chi_B^{**}=\chi_{(0,|B|)}^{**}$,
    one deduces that
\begin{align}\label{205}
\medint_{B} | u - P^k_{B}[u] | \; dx
&
\lesssim |B|^{\frac{k+1}{n}-1} \|\nabla^{k+1} u\|_{X_{s-k-1}(\rn)}\, \| \chi_{B}\|_{X_{s-k-1}'(\rn)}
\\ &
\lesssim\, |B|^{\frac{k+1}{n}-1} |\nabla^{[s]} u|_{\{s\}, A, \rn} \,
\big\|r^{\frac{s-(k+1)}{n}} \left (\chi_{(0, |B|)}\right)^{**}(r)\big\|_{L^{\widetilde{A}}(0, \infty)}. \nonumber
\end{align}
\\
An application of Lemma~\ref{L:lemma-for-ri-norms}, with $r=|B|$, $\alpha =\frac{s-(k+1)}{n}$, and $\beta=0$
yields
\begin{equation}\label{E:norm-higher}
    \big\|r^{\frac{s-(k+1)}{n}} \left (\chi_{(0,|B|)}\right)^{**}(r)\big\|_{L^{\widetilde{A}}(0, \infty)}
    \approx
    |B|\big\|r^{\frac{s-(k+1)}{n}-1} \chi_{(|B|, \infty)}(r) \big\|_{L^{\widetilde{A}}(0, \infty)}.
\end{equation}
On the other hand, computations show that
\begin{equation}\label{E:opt-0}
    \big\|r^{\frac{s-(k+1)}{n}-1} \chi_{(|B|, \infty)}(r) \big\|_{L^{\widetilde{A}}(0, \infty)}
    \approx
    \frac{1}{|B|^{1-\frac{s-(k+1)}{n}}F_k^{-1}\left(|B|^{-1}\right)},
\end{equation}
see e.g. \cite[Equation (6.24)]{CiRa}.
Coupling \eqref{E:norm-higher} with \eqref{E:opt-0} results in
\begin{equation}\label{210}
    \big\|r^{\frac{s-(k+1)}{n}} \left (\chi_{(0,|B|)}\right)^{**}(r)\big\|_{L^{\widetilde{A}}(0, \infty)}
    \approx
   \frac{|B|^{\frac{s-(k+1)}{n}}}{F_k^{-1}\left(|B|^{-1}\right)}.
\end{equation}
Equations \eqref{205}, \eqref{210} and~ \eqref{varphi_k} imply that
\begin{align}\label{212}
&\frac{1}{\psi_{s,A}^k(|B|^{\frac{1}{n}}) \,|B|^{\frac{k}{n}}}\, \medint_{B} |u - P^k_{B}[u]| \; dx
\lesssim \frac{|B|^{\frac{k+1}{n}-1+\frac{s-(k+1)}{n}}}{\psi_{s,A}^k(|B|^{\frac{1}{n}}) \,|B|^{\frac{k}{n}} {F}^{-1}_k (|B|^{-1})}
|\nabla ^{[s]}u|_{\{s\}, A, \rn}
\\ &
 =  \frac{|B|^{\frac{s-k}{n}-1}}{\psi_{s,A}^k(|B|^{\frac{1}{n}}) {F}^{-1}_k (|B|^{-1})}
|\nabla ^{[s]}u|_{\{s\}, A, \rn}
 =   |\nabla ^{[s]}u|_{\{s\}, A, \rn}. \nonumber
\end{align}
Taking the supremum over all balls in \eqref{212} and using~\eqref{min} yield~\eqref{202}.
\\
To prove the optimality of the space $\mathcal L^{k,\psi_{s,A}^{ k}}(\rn)$, assume that $\varphi$ is an admissible function such that
\begin{equation}\label{202bis}
   |u|_{\mathcal{L}^{k,\varphi}(\rn)} \lesssim \, \big| \nabla ^{[s]} u\big|_{\{s\}, A, \rn}
\end{equation}
for $u\in  V^{s,A}_{d, k+1}(\rn)$.
Let $H$ be a homogeneous harmonic polynomial of degree $k+1$ and let $f$ be a nonnegative, non-decreasing function in $L^A(0,\infty)$ with bounded support. Let $w_f$ be defined by~\eqref{226}.
Then, by~\eqref{sep109},  
\begin{equation}\label{E:opt-1}
   |\nabla^{[s]}w_f|_{\{s\},A,\rn} \lesssim \|f\|_{L^A(0,\infty)}.
\end{equation}
Let $B$ be a ball centered at $0$.
Analogous arguments as in the proofs of  \cite[Equations (6.5) and (6.7)]{CCPS_HC}
tell us that
\begin{equation}\label{E:opt-3}
    P^k_B[w_f]= 0
\end{equation}
and
\begin{equation}\label{E:opt-4}
    \medint_B|w_f|\,dx \gtrsim |B|^{\frac{k+1}{n}}\int_{|B|}^{\infty}f(r)\, r^{-1+\frac{s-(k+1)}{n}}\,dr.
\end{equation}
From  \eqref{202bis}--\eqref{E:opt-4}, one infers that 
\begin{align}\label{E:opt-5}
  \int_{|B|}^{\infty}f(r)\,r^{-1+\frac{s-(k+1)}{n}}\,dr&\le |B|^{-\frac{k+1}{n}}\medint_B|w_f|\,dx
        =|B|^{-\frac{k+1}{n}}\medint_B|w_f-P^k_B[w_f]|\,dx
            \\
        &\le \varphi (|B|^{\frac{1}{n}})|B|^{-\frac{1}{n}}|w_f|_{\mathcal L^{k,\varphi}(\rn)}
        \lesssim \varphi(|B|^{\frac{1}{n}})|B|^{-\frac{1}{n}}\|f\|_{L^A(0,\infty)},\nonumber
\end{align}
whence
\begin{equation}\label{E:opt-6}
    \frac{1}{\|f\|_{L^A(0,\infty)}} \int_{|B|}^{\infty}f(r)\,r^{-1+\frac{s-(k+1)}{n}}\,dr
    \lesssim \varphi (|B|^{\frac{1}{n}})|B|^{-\frac{1}{n}}.
\end{equation}
Notice that, thanks to an approximation argument and the Fatou property of Orlicz spaces, the inequality \eqref{E:opt-6} continues to hold even if $f$  is not necessarily with bounded support.
Thus,
\begin{equation}\label{E:opt-6a}
    \sup_{f \in L^A(0,\infty), \,f \downarrow}\frac{1}{\|f\|_{L^A(0,\infty)}} \int_{|B|}^{\infty}f(r)\,r^{-1+\frac{s-(k+1)}{n}}\,dr
    \lesssim \varphi (|B|^{\frac{1}{n}})|B|^{-\frac{1}{n}}.
\end{equation}
By~\cite[Lemma~5.2]{ACPS_modulus}, this  supremum  does not  increase, up to a multiplicative constant, if the class of admissible functions $f$ is modified as to consist of 
    those $f\in L^A(0, \infty)$ which vanish on $(0,|B|)$ and do not increase on $(|B|,\infty)$. 
Therefore, by equations \eqref{E:opt-6a} and \eqref{E:dual-monotone},
\begin{equation}\label{E:opt-7}
    \|r^{-1+\frac{s-(k+1)}{n}}\chi_{(|B|,\infty)}(r)\|_{L^{\widetilde A}(0,\infty)}
    \lesssim \varphi (|B|^{\frac{1}{n}})|B|^{-\frac{1}{n}}.
\end{equation}
Combining~\eqref{E:opt-7} with \eqref{E:opt-0} yields:
\begin{equation}\label{E:opt-8}
    \frac{1}{|B|^{1-\frac{s-(k+1)}{n}}F_k^{-1}(|B|^{-1})} \lesssim \varphi(|B|^{\frac{1}{n}})|B|^{-\frac{1}{n}},
\end{equation}
whence, owing to~\eqref{varphi_k},
\begin{equation}\label{E:opt-8bis}
     \psi_{s,A}^k(|B|^{\frac{1}{n}}) \lesssim \varphi(|B|^{\frac{1}{n}}).
\end{equation}
Consequently, thanks to the arbitrariness of $B$, we obtain
\begin{equation}\label{E:opt-9}
    \psi_{s,A}^k(r)\lesssim\varphi(r) \qquad \text{for $r>0$.}
\end{equation}
The optimality of the space $\mathcal L^{k,\psi_{s,A}^k}$ in~\eqref{201} is thus established.

The equivalence of~\eqref{E:higher-vanishing-1}--\eqref{naples1} and~\eqref{E:higher-vanishing-2} can be proved completely analogously to the corresponding part of the proof of Theorem~\ref{teo2}, and is therefore omitted. 
\par \noindent
 Part (ii). Fix any function $u\in V^{s,A}(\rn)$ and an open ball $B\subset\rn$. Set
\begin{equation}\label{E:ii-1}
    v=u-P^{[s]}_B[u].
\end{equation}
Equation ~\eqref{Poli_cond} implies that
\begin{equation}\label{E:ii-2}
\int_{B}\nabla^hv\,dx=0
\end{equation}
for $h\in\{0,\dots,[s]\}$.
This ensures  that
\begin{equation}\label{E:ii-2.5}
    P^{[s]-1}_B[v] = 0
\end{equation}
and
\begin{equation}\label{E:ii-3}
    \int_{B}\nabla^{[s]}u\,dx = \int_{B}\nabla^{[s]}\big(P^{[s]}_B[u]\big)\,dx.
\end{equation}
Since $\nabla^{[s]}\big(P^{[s]}_B[u]\big)$ is  constant, 
\begin{equation}\label{E:ii-4}
\int_{B}\nabla^{[s]}\big(P^{[s]}_B[u]\big)\,dx = |B|\,\nabla^{[s]}\big(P^{[s]}_B[u]\big ).
\end{equation}
From ~\eqref{E:ii-3} and \eqref{E:ii-4}, we deduce that
\begin{equation}\label{E:ii-5}
    \big(\nabla^{[s]}u\big)_B = \nabla^{[s]}\big(P^{[s]}_B[u]\big).
\end{equation}
Equations  \eqref{E:ii-5} and~\eqref{E:ii-1} tell us that
\begin{equation}\label{E:ii-6}
\int_{B}\big|\nabla^{[s]}v\big|\,dx = \int_{B}\left|\nabla^{[s]}u-\nabla^{[s]}\big(P^{[s]}_B[u]\big)\right|\,dx = \int_{B}\big|\nabla^{[s]}u-\big(\nabla^{[s]}u\big)_B\big|\,dx.
\end{equation}
Lemma~\ref{L:1-half} implies that
\begin{equation}\label{E:ii-7}
    \int_{B}\big|v-P^{[s]-1}_B[v]\big|\,dx \lesssim |B|^{\frac{[s]}{n}} \int_{B}\big|\nabla^{[s]}v\big|\,dx.
\end{equation}
Combining equations ~\eqref{E:ii-1}, \eqref{E:ii-2.5}, \eqref{E:ii-7} and \eqref{E:ii-6} yields:
\begin{align}\label{E:ii-8}
        \int_{B}\big|u-P^{[s]}_B [u]\big |\,dx &= \int_{B}\big|v\big|\,dx =\int_{B}\big|v-P^{[s]-1}_B[v]\big|\,dx
            \\
        &\lesssim  |B|^{\frac{[s]}{n}} \int_{B}\big|\nabla^{[s]}v\big|\,dx
        =  |B|^{\frac{[s]}{n}} \int_{B}\big|\nabla^{[s]}u-\big(\nabla^{[s]}u\big)_B\big|\,dx.\nonumber
\end{align}
Hence,  
\begin{equation}\label{E:ii-9}
    \frac{1}{\psi_{s,A}^{[s]}\big(|B|^{\frac{1}{n}}\big)|B|^{\frac{[s]}{n}}}
    \medint_{B}\big|u-P^{[s]}_B[u]\big|\,dx
    \lesssim
    \frac{1}{\psi_{s,A}^{[s]}\big(|B|^{\frac{1}{n}}\big)}
    \medint_{B}\big|\nabla^{[s]}u-\big(\nabla^{[s]}u\big)_B \big|\,dx.
\end{equation}
Taking supremum over all balls $B$ in equation ~\eqref{E:ii-9}  results in:
\begin{equation}\label{E:ii-11}
    |u|_{\mathcal L ^{[s], \psi_{s,A}^{[s]}} (\rn)} \lesssim \big|\nabla^{[s]}u\big|_{\mathcal L^{[s],\psi_{s,A}^{ [s]}}(\rn)}.
\end{equation}
Since, according to the definitions \eqref{1} and \eqref{varphi_k}, 
\begin{equation*}
    \psi_{s,A}^{[s]}
    =
    \varphi_{\{s\},A},
\end{equation*}
equation ~\eqref{E:ii-11} reads:
\begin{equation}\label{E:ii-11bis}
    |u|_{\mathcal L ^{[s], \psi_{s,A}^{[s]}} (\rn)} \lesssim \big|\nabla^{[s]}u\big|_{\mathcal L^{\varphi_{\{s\},A}}(\rn)}.
\end{equation}
Consequently, an application of the inequality ~\eqref{3} with $u$ replaced with $\nabla^{[s]}u$ implies that
\begin{equation}\label{E:ii-12}\big|\nabla^{[s]}u\big|_{\mathcal L^{\varphi_{\{s\},A}}(\rn)} \lesssim \big|\nabla^{[s]}u\big|_{\{s\},A,\rn}, 
\end{equation}
and equation \eqref{204} now follows via 
~\eqref{E:ii-11bis} and~\eqref{E:ii-12}.
\\
To prove the optimality of the space $\mathcal L ^{[s], \psi_{{s},A}^{{[s]}}}(\rn)$ in~\eqref{203} and~\eqref{204}, assume that $\varphi$ is an admissible function such that
\begin{equation}\label{204bis}
   |u|_{\mathcal{L}^{[s], \varphi}(\rn)} \lesssim \, \big| \nabla ^{[s]} u\big|_{\{s\}, A, \rn}
\end{equation}
for $u\in  V^{s,A}(\rn)$. Then,
the  argument which 
yields \eqref{E:opt-6}
 in Part (i) can be repeated for $k=[s]$. It tells us that
\begin{equation}\label{E:opt-ii-1}
    \frac{1}{\|f\|_{L^A(0,\infty)}} \int_{|B|}^{\infty}f(r)\, r^{-1+\frac{\{s\}-1}{n}}\,dr
    \lesssim \varphi(|B|^{\frac{1}{n}})|B|^{-\frac{1}{n}}
\end{equation}
 for every  ball $B\subset \rn$ and every nonnegative function  $f \in L^A(0,\infty)$ which vanishes in $(0, |B|)$ and is non-increasing in $(|B|, \infty)$. 
Hence,
\begin{equation}\label{E:opt-ii-1a}
    \frac{|B|^{-1+\frac{\{s\}}{n}}}{\|f\|_{L^A(0,\infty)}} \int_{|B|}^{2|B|}f(r)\,dr
    \lesssim \varphi(|B|^{\frac{1}{n}})
\end{equation}
for every $B$ and $f$ as above.
Taking the supremum over all these functions $f$ yields:
\begin{equation}\label{E:opt-ii-1b}
    |B|^{-1+\frac{\{s\}}{n}}\|\chi_{(|B|,2|B|)}\|_{L^{\widetilde A}(0,\infty)} 
    \lesssim \varphi(|B|^{\frac{1}{n}})
\end{equation}
for every ball $B$.
Owing to ~\eqref{E:norm-characteristic} and~\eqref{varphi_k}, and to the arbitrariness of $B$, we deduce that
\begin{equation}\label{E:opt-ii-2tris}
    \psi_{s,A}^{[s]}(r)
    \lesssim \varphi(r)
    \qquad\text{for $r>0$.}
\end{equation}
The optimality of the space $\mathcal L^{[s],\psi_{s,A}^{[s]}}(\rn)$ in~\eqref{203} and~\eqref{204} follows from \eqref{E:opt-ii-2tris}. 

The equivalence of~\eqref{E:higher-vanishing-3}--\eqref{naples2} to ~\eqref{E:higher-vanishing-4} can be established  via an argument similar to that of the parallel statement in Theorem~\ref{teo2}, and is  omitted.
\end{proof}

\bigskip
\par\noindent
{\bf Acknowledgment}.  We wish to thank  the referees for very carefully checking the paper and for several comments and valuable suggestions.
 \smallskip

\section*{Compliance with Ethical Standards}\label{conflicts}

\smallskip
\par\noindent
{\bf Funding}. This research was partly funded by:
\\ (i) GNAMPA   of the Italian INdAM - National Institute of High Mathematics (grant number not available)  
(A. Alberico, A. Cianchi);
\\ (ii)\ Research Project   of the Italian Ministry of Education, University and
Research (MIUR) Prin 2017 \lq \lq Direct and inverse problems for partial differential equations: theoretical aspects and applications'',
grant number~201758MTR2 (A.~Cianchi);
\\ (iii)\ Research Project   of the Italian Ministry of Education, University and
Research (MIUR) Prin~2022 \lq \lq Partial differential equations and related geometric-functional inequalities'',
grant number 20229M52AS, cofunded by~PNRR (A.~Cianchi);
\\  (iv)  Grant no. 23-04720S of the Czech Science Foundation  (L. Pick and L. Slav\'{\i}kov\'{a});
\\ (v) Primus research programme PRIMUS/21/SCI/002 of Charles University (L. Slav\'ikov\'a);
\\ (vi) Charles University Research Centre program number~UNCE/24/SCI/005 (L.~Slav\'ikov\'a).

\bigskip
\par\noindent
{\bf Conflict of Interest}. The authors declare that they have no conflict of interest.


\begin{thebibliography}{999}

\bibitem{ACPS_lim0}
\by{\name{A.}{Alberico}, \name{A.}{Cianchi}, \name{L.}{Pick} \et \name{L.}{Slav\'ikov\'a}}
\paper{On the limit as $s\to 0^+$ of fractional Orlicz-Sobolev spaces}
\jour{J. Fourier Anal. Appl.}
\vol{26}
\yr{2020}
\pages{no. 6, Paper No. 80, 19 pp}
\endpaper

\bibitem{ACPS_lim1}
\by{\name{A.}{Alberico}, \name{A.}{Cianchi}, \name{L.}{Pick} \et \name{L.}{Slav\'ikov\'a}}
\paper{On the limit as $s\to 1^-$ of possibly non-separable fractional Orlicz-Sobolev spaces}
\jour{ Atti Accad. Naz. Lincei Rend. Lincei Mat. Appl. }
\vol{31}
\yr{2020}
\pages{879--899}
\endpaper

\bibitem{ACPS_emb}
\by{\name{A.}{Alberico}, \name{A.}{Cianchi}, \name{L.}{Pick} \et \name{L.}{Slav\'ikov\'a}}
\paper{Fractional Orlicz-Sobolev embeddings}
\jour{J. Math. Pures Appl.}
\vol{149}
\yr{2021}
\pages{216--253}
\endpaper

\bibitem{ACPS_inf}
\by{\name{A.}{Alberico}, \name{A.}{Cianchi}, \name{L.}{Pick} \et \name{L.}{Slav\'ikov\'a}}
\paper{Boundedness of functions in fractional Orlicz-Sobolev spaces}
\jour{Nonlinear Anal.}
\vol{230}
\yr{2023}
\pages{113--231}
\endpaper

\bibitem{ACPS_modulus}
\by{\name{A.}{Alberico}, \name{A.}{Cianchi}, \name{L.}{Pick} \et \name{L.}{Slav\'ikov\'a}}
\paper{On the modulus of continuity of fractional Orlicz-Sobolev functions}
\jour{Math. Ann.}
\vol{391}
\yr{2025}
\pages{2429--2477}
\endpaper

\bibitem{BaOu}
\by{\name{S.}{Bahrouni}\et\name{H.}{Ounaies}}
\paper{Embedding theorems in the fractional Orlicz-Sobolev space and applications to non-local problems}
\jour{Discrete Contin. Dyn. Syst.}
\vol{40}
\yr{2020}
\pages{2917--2944}
\endpaper

\bibitem{Bal:22}
\by{\name{K.}{Bal}, \name{K.}{Mohanta}, \name{P.}{Roy}\et\name{F.}{Sk}}
\paper{Hardy and Poincaré inequalities in fractional Orlicz-Sobolev spaces}
\jour{Nonlinear Anal.}
\vol{216}
\yr{2022}
\pages{112697}
\endpaper

\bibitem{BS}
\by{\name{C.}{Bennett}\et\name{R.}{Sharpley}}
\book{Interpolation of Operators}
\publ{Pure and Applied Mathematics Vol.\ 129, Academic Press,
Boston 1988}
\endbook

\bibitem{BKO}
\by{\name{S.-S.}{Byun}, \name{H.}{Kim}\et\name{J.}{Ok}} \paper{Local H\"older continuity for fractional nonlocal equations with general growth} \jour{Math. Ann.} \vol{387} \yr{2023}
\pages{807--846}
\endpaper

\bibitem{BKS}
\by{\name{S.-S.}{Byun}, \name{H.}{Kim}\et\name{K.}{Song}} \paper{Nonlocal Harnack inequality for fractional elliptic equations with Orlicz growth} \jour{Bull. Lond. Math. Soc.} \vol{55} \yr{2023}
\pages{2382--2399}
\endpaper

\bibitem{CMSV}
\by{\name{M.}{Capolli}, \name{A.}{Maione}, \name{A. M.}{Salort} \et \name{E.}{Vecchi}}
\paper{Asymptotic behaviours in fractional Orlicz–Sobolev spaces on Carnot groups}
\jour{The Journal of Geometric Analysis} \vol{31} \yr{2021} \pages{3196--3229}
\endpaper

\bibitem{CCPS_HC}
\by{\name{P.}{Cavaliere}, \name{A.}{Cianchi}, \name{L.}{Pick}\et \name{L.}{{Slav\'{\i}kov\'a}}}
\paper{Higher-order Sobolev embeddings into spaces of Campanato and  Morrey type}
\jour{Nonlinear Anal. }  \vol{251} \yr{2025}
\pages{Paper No. 113678}
\endpaper

\bibitem{CKW}
\by{\name{J.}{Chaker}, \name{M.}{Kim} \et \name{M.}{Weidner}} \paper{Harnack inequality for nonlocal problems with non-standard growth} \jour{Math. Ann. } \vol{386} \yr{2023}
\pages{533--550}
\endpaper

\bibitem{cianchi-ibero}
\by{\name{A.}{Cianchi}} \paper{Optimal Orlicz-Sobolev embeddings} \jour{Rev. Mat. Iberoamericana} \vol{20} \yr{2004}
\pages{427--474}
\endpaper

\bibitem{CP-2003}
\by{\name{A.}{Cianchi}\et
\name{L.}{Pick}}
\paper{Sobolev embeddings into spaces of Campanato, Morrey, and Hölder type}
\jour{J. Math. Anal. Appl.} \vol{282} \yr{2003} \pages{128--150}
\endpaper

\bibitem{CiRa}
\by{\name{A.}{Cianchi} \et
\name{M.}{Randolfi}}
\paper{ On the modulus of continuity of weakly differentiable functions}
\jour{Indiana Univ.
Math. J.} \vol{60} \yr{2011}
\pages{1939--1973}
\endpaper

\bibitem{deNBS}
\by{\name{P.}{De N\'apoli}, \name{J.}{Fern\'andez Bonder}\et\name{A. M.}{Salort}}
\paper{A P\'olya--Szeg\H{o} principle for general fractional Orlicz--Sobolev spaces}
\jour{Complex Var. Elliptic Equ.}
\vol{66, no. 4}
\yr{2021}
\pages{546--568}
\endpaper

\bibitem{FangZhang}
\by{\name{Y.}{Fang} \et \name{C.}{Zhang}}
\paper{Harnack inequality for the nonlocal equations with general growth}
\jour{Proc. Roy. Soc. Edinburgh Sect. A}
\vol{153}
\yr{2023}
\pages{1479--1502}
\endpaper

\bibitem{BonderSalort}
\by{\name{J.}{Fernandez-Bonder} \et \name{A. M.}{Salort}}
\paper{Fractional order Orlicz-Sobolev spaces}
\jour{J. Funct. Anal.}
\vol{227}
\yr{2019}
\pages{333--367}
\endpaper


\bibitem{BonderSalortVivas}
\by{\name{J.}{Fernandez-Bonder}, \name{A. M.}{Salort}\et \name{H.}{Vivas}}
\paper{Interior and up to the boundary regularity for the fractional g-Laplacian: the convex case}
\jour{Nonlinear Anal.}
\vol{223}
\yr{2022}
\pages{Paper No. 113060, 31 pp}
\endpaper

\bibitem{Iok:24}
\by{\name{N.}{Ioku}, \et\name{K.}{Shibuya}}
\paper{Brezis–Van Schaftingen–Yung formula in Orlicz spaces}
\jour{J. Math. Anal. Appl.}
\vol{538}
\yr{2024}
\pages{128350}
\endpaper

\bibitem{Mih}
\by{\name{Z.}{Mihula}}
\paper{Embeddings of homogeneous Sobolev spaces on the entire space}
\jour{Proc. Roy. Soc. Edinburgh Sect. A}
\vol{151, no. 1}
\yr{2021}
\pages{296--328}
\endpaper

\bibitem{OSM}
\by{\name{P.}{Ochoa}, \name{A.}{Silva} \et \name{M. J.}{Marziani}}
\paper{ Existence and multiplicity of solutions for a Dirichlet problem in fractional Orlicz–Sobolev spaces} \jour{Ann. Mat. Pura Appl.} \vol{203} \yr{2024} \pages{21--47}
\endpaper

\bibitem{SalortVivas}
\by{\name{A.}{Salort} \et \name{H.}{Vivas}}
\paper{Fractional eigenvalues in Orlicz spaces with no $\Delta_2$ condition}
\jour{J. Differential Equations}
\vol{327}
\yr{2022}
\pages{166--188}
\endpaper

\bibitem{SalortVivas2}
\by{\name{A. M. {Salort}} \et \name{H.}{Vivas}}
\paper{Homogeneous eigenvalue problems in Orlicz-Sobolev spaces}
\jour{Topol. Methods Nonlinear Anal.}
\vol{63,2}
\yr{2024}
\pages{429--453}
\endpaper   

\bibitem{Sil:21}
\by{\name{E. D.}{Silva}, \name{M. L.}{Carvalho}, \name{J. C.}{de Albuquerque}\et\name{S.}{Bahrouni}}
\paper{Compact embedding theorems and a Lions’ type
lemma for fractional Orlicz-Sobolev spaces}
\jour{J. Differ. Equ.}
\vol{300}
\yr{2021}
\pages{487--512}
\endpaper

\bibitem{Spanne}
\by{\name{S.}{Spanne}}
\paper{Some function spaces defined using the mean oscillation over cubes}
\jour{Ann. Scuola Norm. Sup. Pisa}
\vol{19}
\yr{1965}
\pages{593--608}
\endpaper
\end{thebibliography}
\end{document}